\newtheorem*{teoa}{Theorem A}
\newtheorem*{teob}{Theorem B}
\newtheorem*{conjetura}{Conjecture}
\newtheorem{defi}{Definition} 
\newtheorem{teo}{Theorem} 
\newtheorem{pro}{Proposition} 
\newtheorem{lema}{Lemma}
\newtheorem{rem}{Remark}
\newtheorem{claim}{Claim}
\newcommand{\N}{\mathbb{N}}
\newcommand{\bs}{_{\blacksquare}}
\newcommand{\R}{\mathbb{R}}
\begin{document}

\title[A generalization of Escobar-Riemann problem]{A generalization of Escobar-Riemann mapping type problem on smooth metric measure spaces}
\author{Jhovanny Mu\~{n}oz Posso\textsuperscript{1} \textsuperscript{2}}

\subjclass[2010]{53C23, 46E35, 49Q20, 53A30, 53C21}

\keywords{Yamabe problem }
\maketitle


\footnotetext[1]{ {\it Universidad del Valle, Departamento de Matem\'{a}ticas, Universidad
del Valle {A. A. 25360. Cali, Colombia.}
}}

\footnotetext[2]{ {\it IMPA, Estrada Dona Castorina 110, Rio de Janeiro 22460-320, Brasil
}}

\begin{abstract}
In this article, we introduce an analogous problem to Yamabe type problem considered by Case in \cite{CaseYamabe}, which generalizes the Escobar-Riemann mapping problem for smooth metric measure spaces with boundary. The last problem will be called Escobar-Riemann mapping type problem. For this purpose, we consider the generalization of Sobolev Trace Inequality deduced by Bolley at. al. in \cite{newtrace}. This trace inequality allows us to introduce an Escobar quotient  and its infimum. This infimum  we call the  Escobar weighted constant. The Escobar-Riemann mapping type problem for  smooth metric measure spaces in manifolds with boundary consists of finding a function which attains the Escobar weighted constant. Furthemore, we resolve the later problem when Escobar weighted constant is negative.  Finally,  we get an Aubin type inequality connecting the weighted Escobar constant for compact smooth metric measure space and the optimal constant for the trace inequality in \cite{newtrace}.
\end{abstract}


\section{\bf Introduction}


When $(M^n, g)$ is a Riemannian manifold with boundary, we denote by  $\partial M$ the boundary of $M$ and by $H_g$ the trace of the second fundamental form of $\partial M$. 
The Escobar-Riemann mapping problem for manifolds with boundary is concerned with finding a metric $g$ with  scalar curvature $R_{g} \equiv 0$ in $M$ and $H_{g}$ constant  on $\partial M$, in the conformal class of the initial metric $g$. Since this problem in the Euclidean half-space reduces to finding the minimizers in  the sharp Trace Sobolev inequality, we consider a particular case of the Trace Gagliardo-Nirenberg-Sobolev inequality in \cite{newtrace}. 

To present the Trace Gagliardo-Nirenberg-Sobolev inequality, let  $\R^{n}_{+} = \{ (x, t) : x \in \R^{n-1}, t \geq 0 \}$ denote the half-space 
and  its boundary by $\partial \R^{n}_{+} = \{ (x, 0) \in \R^n : x \in \R^{n-1} \}$. We identify $\partial \R^{n}_{+}$ with   $\R^{n-1}$ whenever necessary.

\begin{teo}\cite{newtrace}\label{teotrace}
Fix $m \geq 0$. For all $w \in W^{1,2}(\R^{n}_{+}) \cap L^{\frac{2(m+n-1)}{m+n-2}} (\R^{n}_{+})$ it holds that

\begin{equation}\label{general trace Jhova}
\Lambda_{m, n} \left( \int_{\partial \R^{n}_{+}} | w| ^{\frac{2(m + n -1)}{m +n - 2}} \right) ^{\frac{2m + n -2}{m +n -1}}   \leq \left( \int_{\R^{n}_{+}} |\nabla w|^{2}  \right) \left( \int_{\R^{n}_{+}} |w|^{\frac{2(m + n - 1)}{m + n -2}} \right)^{\frac{m }{m +n -1}}
\end{equation} where the constant $\Lambda_{n, m}$ is given by

\begin{equation}\label{c trace}
\Lambda_{m,n} = (m+n-2)^2 \left( \dfrac{ Vol(S^{2m + n - 1})^{\frac{1}{2m + n - 1}}}{2(2m + n -2)} \right)^{\frac{2m + n -1}{m + n - 1}} \left( \dfrac{ \Gamma(2m + n -1)}{\pi^m \Gamma(m + n -1)} \right)^{\frac{1}{m + n - 1}}
\end{equation} and $ Vol(S^{2m + n - 1})$ is the volume of the $2m + n -1$ dimensional unit sphere. Moreover, equality holds if and only if $w$ is a constant  multiple of the function $w _{\epsilon, x_0}$ defined on $\R^{n}_{+}$ by

\begin{equation}\label{funcion trace}
w _{\epsilon, x_0} (x, t) := \left(\frac{2 \epsilon}{(\epsilon + t)^2 + |x - x_0|^2}  \right)^{\frac{m+n-2 }{2}}
\end{equation} where $\epsilon > 0 $ and $x_0 \in \R^{n-1}$.
\end{teo}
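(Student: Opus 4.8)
\medskip

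\noindent Since \eqref{general trace Jhova} is a scale-invariant sharp inequality with an explicit extremal, the plan is to identify $\Lambda_{m,n}$ with the infimum of the Escobar-type quotient
\[
\mathcal E(w):=\frac{\left(\int_{\R^{n}_{+}}|\nabla w|^{2}\right)\left(\int_{\R^{n}_{+}}|w|^{q}\right)^{\frac{m}{m+n-1}}}{\left(\int_{\partial\R^{n}_{+}}|w|^{q}\right)^{\frac{2m+n-2}{m+n-1}}},\qquad q:=\frac{2(m+n-1)}{m+n-2},
\]
and to show that this infimum is attained exactly on the family $\{c\,w_{\epsilon,x_{0}}\}$ of \eqref{funcion trace}. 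First I would reduce to a nonnegative $w\in C^{\infty}_{c}(\ol{\R^{n}_{+}})$: replacing $w$ by $|w|$ does not increase $\int_{\R^{n}_{+}}|\nabla w|^{2}$ and leaves the other two integrals unchanged, while $\mathcal E$ is continuous for the norm of $W^{1,2}(\R^{n}_{+})\cap L^{q}(\R^{n}_{+})$, so a density argument reduces the general statement to the smooth case. I would then record that $\mathcal E$ is invariant under the three operations $w\mapsto\lambda w$, $w\mapsto w(\lambda\,\cdot\,)$ and $w\mapsto w(\cdot+\tau,\cdot)$, which together account for the multiplicative constant and the two parameters $(\epsilon,x_{0})$; the exponents in $\mathcal E$ are thereby forced, $q$ being the critical trace exponent of the Gagliardo--Nirenberg interpolation between the bulk and the boundary $L^{q}$-norms, and this is the structural reason $m$ may be any nonnegative real.

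For the main estimate I would use a mass-transportation argument in the spirit of the sharp Sobolev and trace inequalities of Cordero-Erausquin--Nazaret--Villani and Nazaret. Fix the candidate extremal $W_{0}:=w_{1,0}$ (the function \eqref{funcion trace} with $\epsilon=1$, $x_{0}=0$), put $F:=w^{q}/\!\int_{\R^{n}_{+}}w^{q}$ and $G:=W_{0}^{q}/\!\int_{\R^{n}_{+}}W_{0}^{q}$, two probability densities supported in $\ol{\R^{n}_{+}}$, and let $T=\nabla\varphi$ be the Brenier map pushing $F$ forward to $G$, so that the Monge--Amp\`ere equation $F=(G\circ T)\det DT$ holds a.e.\ and, crucially, $T(\ol{\R^{n}_{+}})\subseteq\ol{\R^{n}_{+}}$. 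Feeding the Monge--Amp\`ere relation into the relevant power-entropy of $F$, bounding $(\det DT)^{1/n}\le\tfrac1n\Delta\varphi$ by the arithmetic--geometric mean inequality applied to the eigenvalues of $D^{2}\varphi$ (keeping track of the distributional part of $\Delta\varphi$), changing variables by $T$, and integrating by parts over $\R^{n}_{+}$, one is led to an inequality in which the bulk term is controlled by Cauchy--Schwarz as $\bigl(\int_{\R^{n}_{+}}|\nabla w|^{2}\bigr)^{1/2}$ times a factor equal, via the transport identity, to a finite moment of $G$, while the boundary term generated by the integration by parts --- whose sign is controlled precisely because $T(\ol{\R^{n}_{+}})\subseteq\ol{\R^{n}_{+}}$ --- is what introduces the trace integral $\int_{\partial\R^{n}_{+}}|w|^{q}$. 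Collecting powers and undoing the normalisation of $\int_{\R^{n}_{+}}w^{q}$ then yields \eqref{general trace Jhova} with constant $\Lambda_{m,n}$.

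The equality case would follow by tracing the argument back: equality forces equality in the arithmetic--geometric mean step, hence $D^{2}\varphi$ is a scalar matrix, and together with equality in Cauchy--Schwarz and the Monge--Amp\`ere structure this forces $T$ to be a dilation composed with a horizontal translation, so that $w$ is a constant multiple of some $w_{\epsilon,x_{0}}$. The value \eqref{c trace} is obtained by evaluating $\mathcal E(W_{0})$: all three integrals are of the type $\int_{\R^{n}_{+}}\bigl((1+t)^{2}+|x|^{2}\bigr)^{-s}\,dx\,dt$ and $\int_{\R^{n-1}}\bigl(1+|x|^{2}\bigr)^{-s}\,dx$, which pass by spherical coordinates to Beta integrals and hence to the Gamma factors and the volume $\mathrm{Vol}(S^{2m+n-1})$ of \eqref{c trace}. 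I expect the main obstacle to be the boundary bookkeeping in the integration-by-parts step: in the flat half-space one must justify, by truncating $w$ near $\partial\R^{n}_{+}$ and at infinity and regularising $\varphi$, that no mass escapes and that the boundary integral appears with exactly the sign and the constant dictated by $\Lambda_{m,n}$. An alternative that avoids the transport map is to conformally identify $\ol{\R^{n}_{+}}$ with the closed hemisphere, symmetrise the transferred functional with respect to the distance to the equatorial $(n-1)$-sphere --- which requires a P\'olya--Szeg\H{o} inequality adapted to that foliation together with a Hardy--Littlewood rearrangement bound for the weighted bulk term, and checking that the conformal factor is monotone along the foliation --- and then reduce to a one-dimensional Emden--Fowler--Bliss problem with power weights whose explicit minimiser reproduces \eqref{funcion trace} and whose minimum is \eqref{c trace}.
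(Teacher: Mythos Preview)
Your proposal is a genuinely different route from the paper's. The paper does \emph{not} use mass transport or symmetrisation; instead it gives a short dimensional-lifting argument in the style of Bakry: for $m\in\mathbb{N}$ one embeds $\R^{n}_{+}$ into $\R^{n+2m}_{+}$ via the ansatz
\[
f(y,x,t)=\Bigl(w(x,t)^{-\frac{2}{m+n-2}}+\tfrac{|y|^{2}}{\tau}\Bigr)^{-\frac{2m+n-2}{2}},\qquad y\in\R^{2m},
\]
applies the classical Escobar--Beckner trace inequality ($m=0$ case) to $f$ in $\R^{n+2m}_{+}$, integrates the $y$-variable explicitly using a Beta/Gamma identity, and is left with an inequality of the form $A\le B\tau^{p}+C\tau^{-q}$ which one then optimises in $\tau$. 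The equality case is read off from the known extremals in dimension $n+2m$. This buys brevity and elementarity --- the argument is a page of calculus once the classical sharp trace inequality is taken as input --- but it only establishes \eqref{general trace Jhova} for $m\in\mathbb{N}$ (indeed the paper says so explicitly); the general $m\ge 0$ statement is quoted from \cite{newtrace}.

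Your mass-transportation outline, by contrast, would handle all real $m\ge 0$ in one stroke and is self-contained (no appeal to the $m=0$ case), at the cost of the delicate boundary bookkeeping you flag; it is in fact much closer in spirit to the original proof in \cite{newtrace}. So both approaches are viable, but they differ in scope and in what they take as a black box: the paper trades generality in $m$ for a quick reduction to a known sharp inequality, whereas your plan trades that shortcut for a direct, uniform-in-$m$ argument whose main labour is justifying the integration by parts on the half-space with the Brenier potential.
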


Del Pino and Dolbeaut studied the sharp Gagliardo-Nirenberg-Sobolev inequalities. Based on Del Pino and Dolbeaut's result, Case in \cite{CaseYamabe} considered a Yamabe type problem for smooth metric measure spaces in manifolds without boundary, which generalizes the Yamabe problem when $m=0$.  Then, using Theorem \ref{teotrace} instead of  Gagliardo-Nirenberg-Sobolev inequalities and following similar ideas  in \cite{CaseYamabe}, we will introduce an Escobar-Riemann mapping type problem for smooth metric measure spaces in manifolds with boundary. Thus, it is necessary to consider the notion of \textit{smooth metric measure space with boundary} defined by a five-tuple $(M^n, g, e^{-\phi}dV_g, e^{-\phi}d \sigma_g, m )$  where $dV_g$ and $d \sigma_g$ are the volume form induced by the metric $g$ in $M$ and on the boundary $\partial M$, respectively; a function $\phi$ such that $\phi \in C^{\infty}(M)$; and a parameter $m \in [0, \infty)$. In addition, if $m = 0$, we require $\phi = 0$. 

Let us denote the scalar curvature, the Laplacian and the Gradient associated to the metric $g$ by $R_g$, $\Delta_g$, and $\nabla_g$, respectively. The \textit{weighted scalar curvature} $R^{m}_{\phi}$ of a smooth metric measure space for $m=0$ is $R^{m}_{\phi} = R_g$ and for $m \neq 0$ is the function $R^{m}_{\phi} := R_g + 2\Delta_g \phi- \frac{m+1}{m}|\nabla_g \phi|^{2}$. The  \textit{weighted Escobar quotient} for this smooth metric measure is defined by

\begin{equation}\label{quotient}
\begin{array}{ll}
\mathcal{Q}(w) = & \dfrac{\int_{M} (|\nabla w|^{2} + \frac{m + n - 2}{4(m + n - 1)}R^{m}_{\phi } w^{2}) e^{-\phi} dV_g  + \int_{\partial M} \frac{m + n - 2}{2(m + n - 1)} H^m_{\phi} w^{2} e^{-\phi} d \sigma_g  }{ (\int_{\partial M } |w|^{\frac{2(m + n - 1)}{m +n - 2}}e^{-\phi} d\sigma_g)^{\frac{2m + n -2}{m +n -1}} (\int_{M}| w| ^{\frac{2(m + n - 1)}{m +n - 2}}e^{-\frac{(m-1)\phi}{m}}dV_g) ^{-\frac{m }{m +n -1}}},
\end{array}  
\end{equation} where we denote by $H^m_{\phi} = H_g + \dfrac{\partial \phi}{\partial \eta}$ the Gromov mean curvature and  $\dfrac{\partial }{\partial \eta}$ is the outer normal derivative. 

The \textit{weighted Escobar constant} $\Lambda[M^n, g, e^{-\phi} dV_g, e^{-\phi} d \sigma_g, m] \in \R \cup \{ -\infty \}$  is defined by

\begin{equation}
\Lambda:=\Lambda[M^n, g, e^{-\phi} dV_g, e^{-\phi} d \sigma_g, m ] = \inf \{  \mathcal{Q}(w) : w \in H^1(M, e^{-\phi} dV_g)\}.
\end{equation}

If $m = 0$, the quotient \eqref{quotient} coincides with the Sobolev quotient considered by Escobar in the Escobar-Riemann mapping problem. We prove the existence of a minimizer of the weighted Escobar constant when this constant is negative. The exact statement is

\begin{teoa}\label{minimizacion caso negativo}
Let $(M^n, g, e^{-\phi} dV_g, e^{-\phi} d \sigma_g, m)$ be a compact smooth metric measure space with boundary, $m \geq 0$ and negative weighted Escobar constant. Then there exists a positive function $w \in C^{\infty}(M)$ such that 

\[
\mathcal{Q}(w) = \Lambda[M^n, g, e^{-\phi} dV_g, e^{-\phi} d \sigma_g, m ].
\]
\end{teoa}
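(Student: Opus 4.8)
\textit{Proof strategy.} The plan is to apply the direct method of the calculus of variations to $\mathcal{Q}$, exploiting the negativity of $\Lambda$ to recover compactness. Set $p:=\tfrac{2(m+n-1)}{m+n-2}$ and let $N(w)$ denote the numerator of $\mathcal{Q}(w)$. Since $M$ is compact and $\phi\in C^{\infty}(M)$, all the weights appearing in $\mathcal{Q}$ lie between two positive constants, so $H^{1}(M,e^{-\phi}dV_{g})$ equals the usual Sobolev space $H^{1}(M)$ with an equivalent norm; hence the Rellich--Kondrachov theorem, the compactness of the trace $H^{1}(M)\hookrightarrow L^{q}(\partial M)$ for $q<\tfrac{2(n-1)}{n-2}$, the trace Sobolev inequality, and the inequality $\|u\|_{L^{2}(\partial M)}^{2}\le\varepsilon\|\nabla u\|_{L^{2}(M)}^{2}+C_{\varepsilon}\|u\|_{L^{2}(M)}^{2}$ are available. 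A short computation shows $p<\tfrac{2n}{n-2}$ always, and $p\le\tfrac{2(n-1)}{n-2}$ with equality exactly when $m=0$. Since $\mathcal{Q}(\lambda w)=\mathcal{Q}(w)$ and $|\nabla|w||\le|\nabla w|$ a.e., I may restrict to nonnegative competitors, and I fix a minimizing sequence $\{w_{k}\}\subset H^{1}(M)$, $w_{k}\ge0$, $\mathcal{Q}(w_{k})\to\Lambda$, normalized by $\int_{M}w_{k}^{\,p}e^{-\frac{(m-1)\phi}{m}}dV_{g}=1$ (for $m=0$ this interior factor is absent and one normalizes $\int_{\partial M}w_{k}^{\,p}d\sigma_{g}=1$). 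I also assume $\Lambda\in\R$ (if $\Lambda=-\infty$ there is nothing to prove); note that then the bottom eigenvalue $\mu_{1}$ of the weighted Dirichlet problem attached to $N$ on $H^{1}_{0}(M)$ is strictly positive — otherwise a perturbation $v+t\psi$ with $\psi|_{\partial M}\not\equiv0$ and $t\downarrow0$, using unique continuation in the borderline case $\mu_{1}=0$, would drive $\mathcal{Q}$ to $-\infty$.

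\emph{Step 1 (boundedness of $\{w_{k}\}$ in $H^{1}(M)$).} This is the crux, and the main obstacle, because $N$ need not be coercive: the weighted scalar curvature $R^{m}_{\phi}$ can be strongly negative. Using that $R^{m}_{\phi}$ and the Gromov mean curvature $H^{m}_{\phi}$ are bounded, $N(w_{k})\ge\int_{M}|\nabla w_{k}|^{2}e^{-\phi}dV_{g}-C\|w_{k}\|_{L^{2}(M)}^{2}-C\|w_{k}\|_{L^{2}(\partial M)}^{2}$; the normalization together with Hölder's inequality on the compact manifold gives $\|w_{k}\|_{L^{2}(M)}\le C$ and $\|w_{k}\|_{L^{2}(\partial M)}^{2}\le C\big(\int_{\partial M}w_{k}^{\,p}d\sigma_{g}\big)^{2/p}$, while $\mathcal{Q}(w_{k})\to\Lambda<0$ forces $N(w_{k})=(\Lambda+o(1))\big(\int_{\partial M}w_{k}^{\,p}e^{-\phi}d\sigma_{g}\big)^{\frac{2m+n-2}{m+n-1}}$. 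For $m>0$ the exponent $\tfrac{2m+n-2}{m+n-1}$ is strictly larger than $\tfrac{2}{p}=\tfrac{m+n-2}{m+n-1}$, so comparing the two relations forces $\int_{\partial M}w_{k}^{\,p}d\sigma_{g}$ to be bounded; hence $\|w_{k}\|_{L^{2}(\partial M)}$ and $N(w_{k})$ are bounded, and therefore $\int_{M}|\nabla w_{k}|^{2}e^{-\phi}dV_{g}$ is bounded, so $\sup_{k}\|w_{k}\|_{H^{1}(M)}<\infty$. For $m=0$ (so $\phi\equiv0$ and $p$ is the critical trace exponent) one instead normalizes on the boundary: if $\|w_{k}\|_{H^{1}}\to\infty$, the rescalings $v_{k}=w_{k}/\|w_{k}\|_{H^{1}}$ satisfy $N(v_{k})\to0$ and $\int_{\partial M}v_{k}^{\,p}d\sigma_{g}\to0$, so their weak limit lies in $H^{1}_{0}(M)$ with $N\le0$, hence vanishes by $\mu_{1}>0$, whence $\|\nabla v_{k}\|_{L^{2}}^{2}\to1$ — contradicting $N(v_{k})\to0$.

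\emph{Step 2 (passage to the limit).} Extract $w_{k}\rightharpoonup w$ weakly in $H^{1}(M)$, so $w_{k}\to w$ strongly in $L^{2}(M)$ and $L^{2}(\partial M)$; then the curvature and mean-curvature terms of $N$ converge and $w\mapsto\int_{M}|\nabla w|^{2}e^{-\phi}dV_{g}$ is weakly lower semicontinuous. For $m>0$, $p$ is strictly subcritical for both the interior and the trace embeddings, so $w_{k}\to w$ strongly in $L^{p}(M)$ and $L^{p}(\partial M)$, the denominator of $\mathcal{Q}$ passes to the limit, and $\mathcal{Q}(w)\le\liminf_{k}\mathcal{Q}(w_{k})=\Lambda$, hence $\mathcal{Q}(w)=\Lambda$. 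For $m=0$ the trace embedding is critical; I combine the splittings $N(w_{k})=N(w)+\int_{M}|\nabla(w_{k}-w)|^{2}dV_{g}+o(1)$ and $\int_{\partial M}w_{k}^{\,p}d\sigma_{g}=\int_{\partial M}w^{p}d\sigma_{g}+\int_{\partial M}(w_{k}-w)^{p}d\sigma_{g}+o(1)$ (Brezis--Lieb), set $\theta=\int_{\partial M}w^{p}d\sigma_{g}\in[0,1]$, and use $N(w)\ge\Lambda\theta^{(n-2)/(n-1)}\ge\Lambda$ (with $N(w)\ge0$ when $\theta=0$, since then $w\in H^{1}_{0}(M)$) together with the non-negativity of the residual Dirichlet energy to conclude $\theta=1$ and $\mathcal{Q}(w)=\Lambda$. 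In all cases $w\not\equiv0$; and $w|_{\partial M}\not\equiv0$, for otherwise $w$ would be a non-trivial element of $H^{1}_{0}(M)$ with $N(w)=0$, impossible since $\mu_{1}>0$. The chain of inequalities is forced to be an equality, so in fact $w_{k}\to w$ in $H^{1}(M)$.

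\emph{Step 3 (regularity and positivity).} Replacing $w$ by $|w|$, which is still a minimizer, $w\ge0$ solves weakly the Euler--Lagrange system of $\mathcal{Q}$: $-2\operatorname{div}(e^{-\phi}\nabla w)+\tfrac{m+n-2}{2(m+n-1)}R^{m}_{\phi}e^{-\phi}w=\Lambda\,a\,w^{p-1}e^{-\frac{(m-1)\phi}{m}}$ in $M$, together with the nonlinear Robin-type condition $2e^{-\phi}\tfrac{\partial w}{\partial\eta}+\tfrac{m+n-2}{m+n-1}H^{m}_{\phi}e^{-\phi}w=\Lambda\,b\,w^{p-1}e^{-\phi}$ on $\partial M$, for constants $a,b>0$ ($a=0$ if $m=0$) produced by the two factors of the denominator. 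Since $p<\tfrac{2n}{n-2}$ and $p\le\tfrac{2(n-1)}{n-2}$, elliptic $L^{q}$ and Schauder estimates up to the boundary — a Moser--Trudinger iteration at the critical exponent $m=0$, a direct subcritical bootstrap for $m>0$ — yield $w\in C^{\infty}(M)$ (all coefficients are smooth). Finally, writing the interior equation as $Lw=0$ with $L$ uniformly elliptic and bounded zeroth-order coefficient (absorbing $w^{p-1}=w^{p-2}\cdot w$), the strong maximum principle gives $w>0$ in the interior, and the Hopf lemma together with the boundary condition (whose right-hand side vanishes wherever $w$ does) forbids $w$ from vanishing on $\partial M$; thus $w>0$ on all of $M$. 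The only genuinely delicate step is Step 1: the negativity of $\Lambda$, via the strict inequality $\tfrac{2m+n-2}{m+n-1}>\tfrac{m+n-2}{m+n-1}$ for $m>0$ and via $\mu_{1}>0$ (hence unique continuation and $\Lambda\in\R$) for $m=0$, is exactly what turns the normalization into an a priori $H^{1}$ bound.
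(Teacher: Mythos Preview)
Your argument is correct, and it takes a genuinely different route from the paper's.

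\textbf{Differences.} The paper normalizes on the \emph{boundary} ($\int_{\partial M}w_i^{\,p}=1$) and argues by dichotomy: either $\|w_i\|_{L^2(M)}$ stays bounded, in which case subcritical compactness finishes the proof, or it blows up, in which case the $L^2$-renormalized sequence converges to a nontrivial $w\in H^1_0(M)$ with $N(w)\le 0$, contradicting $\rho_1>0$. To run this contradiction the paper needs two preliminary lemmas: a lower bound $N(w)>-C$ for all boundary-normalized $w$ (Lemma~\ref{estimativo por bajo}), and the equivalence $\rho_1\le 0\Leftrightarrow\Lambda=-\infty$ (Lemma~\ref{rho}), the latter proved via Hopf's lemma. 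By contrast you normalize on the \emph{interior}, and for $m>0$ your Step~1 obtains the $H^1$ bound directly from the exponent mismatch $\tfrac{2m+n-2}{m+n-1}>\tfrac{m+n-2}{m+n-1}$: the negativity of $\Lambda$ forces the boundary $L^p$ mass to stay bounded, with no renormalization and no analogue of Lemma~\ref{estimativo por bajo} needed. You still need $\mu_1>0$ (to rule out $\int_{\partial M}w^{\,p}=0$ at the limit), but you prove it by unique continuation rather than Hopf; either works.

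\textbf{What each approach buys.} Your Step~1 is shorter and more transparent for $m>0$, and your explicit Brezis--Lieb treatment of $m=0$ actually covers the critical trace exponent, which the paper's proof does not address (its proof opens with ``Recall that $m>0$'' and relies on strict subcriticality of the trace embedding). The paper's route, on the other hand, isolates Lemmas~\ref{estimativo por bajo} and~\ref{rho} as standalone results characterizing finiteness of $\Lambda$, which have independent interest. One minor imprecision in your write-up: in Step~2 you say that if $w|_{\partial M}\equiv 0$ then $N(w)=0$; in fact one only gets $N(w)\le\liminf N(w_k)\le 0$, but this still contradicts $\mu_1>0$, so the conclusion stands.
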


Using Theorem \ref{teotrace}, we prove that the weighted Escobar constant for a compact smooth measure space with boundary is always less or equal than the weighted Escobar constant of the model case $(\R^n_{+}, dt^2 + dx^2 , dV, d\sigma, m)$.

\begin{teob}\label{Jhova Aubin desigualdad no estricta}
Let $(M^n, g, e^{-\phi} dV_g, e^{-\phi} d\sigma_g, m)$ be a compact smooth metric measure space with boundary such that $m \geq 0$. Then

\begin{equation}\label{desigualdad Jhova Aubin}
\Lambda[M^n, g, e^{-\phi} dV_g, m] \leq \Lambda[\R^n_{+}, dt^2 + dx^2 , dV, d\sigma, m] = \Lambda_{m, n}.
\end{equation}
\end{teob}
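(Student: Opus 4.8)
The plan is a test-function (concentration) argument in the spirit of Aubin's and Escobar's proofs of the corresponding Aubin-type inequalities. First, on the model half-space $(\R^n_+,dt^2+dx^2,dV,d\sigma,m)$ we have $\phi\equiv 0$, $R^m_\phi\equiv 0$ and $H^m_\phi\equiv 0$, so the weighted Escobar quotient \eqref{quotient} collapses to
\[
\mathcal{Q}_{\R^n_+}(w)=\frac{\int_{\R^n_+}|\nabla w|^2}{\bigl(\int_{\partial\R^n_+}|w|^{q}\bigr)^{\frac{2m+n-2}{m+n-1}}\bigl(\int_{\R^n_+}|w|^{q}\bigr)^{-\frac{m}{m+n-1}}},\qquad q:=\tfrac{2(m+n-1)}{m+n-2},
\]
and Theorem \ref{teotrace} says precisely that $\inf_w\mathcal{Q}_{\R^n_+}(w)=\Lambda_{m,n}$, attained by $w_{\epsilon,x_0}$; this yields the equality $\Lambda[\R^n_+,dt^2+dx^2,dV,d\sigma,m]=\Lambda_{m,n}$. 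It thus remains to prove $\Lambda[M^n,g,e^{-\phi}dV_g,e^{-\phi}d\sigma_g,m]\le\Lambda_{m,n}$, which I will do by transplanting rescaled copies of $w_{\epsilon,x_0}$ that concentrate at a boundary point.

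Fix any $p\in\partial M$ and take Fermi (boundary-normal) coordinates $(x,t)\in\R^{n-1}\times[0,\infty)$ centered at $p$, so that $\partial M=\{t=0\}$ locally, $g_{ij}(p)=\delta_{ij}$, and $g$ is a smooth perturbation of the Euclidean metric near the origin. Choose a fixed cutoff $\chi\in C_c^\infty(M)$ equal to $1$ on a small coordinate half-ball about $p$, and set $u_\epsilon:=\chi\,w_{\epsilon,0}$ with $w_{\epsilon,0}$ as in \eqref{funcion trace} and $x_0=0$; then $u_\epsilon\in H^1(M,e^{-\phi}dV_g)$ and $u_\epsilon>0$ near $p\in\partial M$. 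Writing $W:=w_{1,0}$, which decays like $|y|^{-(m+n-2)}$ at infinity, a change of variables gives $\int_{\R^n_+}|\nabla w_{\epsilon,0}|^2=\epsilon^{-m}\int_{\R^n_+}|\nabla W|^2$, $\int_{\partial\R^n_+}|w_{\epsilon,0}|^{q}=\epsilon^{-m}\int_{\partial\R^n_+}|W|^{q}$ and $\int_{\R^n_+}|w_{\epsilon,0}|^{q}=\epsilon^{1-m}\int_{\R^n_+}|W|^{q}$, all of these model integrals being finite, with at worst logarithmic corrections in the borderline low-dimensional cases, which do not affect the conclusion.

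Now expand $\mathcal{Q}(u_\epsilon)$ in the Fermi chart. Since $w_{\epsilon,0}$ concentrates at scale $\epsilon$ near the origin, replacing $g$ by the Euclidean metric, $e^{-\phi}$ and $e^{-(m-1)\phi/m}$ by their values at $p$, and discarding the cutoff tail each produces only a relative error $O(\epsilon)$ (up to logarithms); the curvature term $\int_M R^m_\phi u_\epsilon^2e^{-\phi}dV_g$ and the mean-curvature term $\int_{\partial M}H^m_\phi u_\epsilon^2e^{-\phi}d\sigma_g$ contribute $O(\epsilon^{2-m})$ and $O(\epsilon^{1-m})$ respectively, hence $o(\epsilon^{-m})$ against the leading energy of order $\epsilon^{-m}$. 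Therefore the numerator of $\mathcal{Q}(u_\epsilon)$ equals $e^{-\phi(p)}\epsilon^{-m}\bigl(\int_{\R^n_+}|\nabla W|^2+o(1)\bigr)$, while the denominator equals
\[
\bigl(e^{-\phi(p)}\epsilon^{-m}\textstyle\int_{\partial\R^n_+}|W|^{q}\bigr)^{\frac{2m+n-2}{m+n-1}}\bigl(e^{-\frac{(m-1)\phi(p)}{m}}\epsilon^{1-m}\textstyle\int_{\R^n_+}|W|^{q}\bigr)^{-\frac{m}{m+n-1}}\bigl(1+o(1)\bigr).
\]
A direct arithmetic check --- forced, in fact, by the invariance of $\mathcal{Q}$ under $w\mapsto cw$ --- shows that the powers of $\epsilon$ and of $e^{-\phi(p)}$ in the denominator collapse exactly to $\epsilon^{-m}$ and $e^{-\phi(p)}$. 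These cancel the corresponding factors of the numerator, so $\lim_{\epsilon\to0}\mathcal{Q}(u_\epsilon)=\mathcal{Q}_{\R^n_+}(W)=\Lambda_{m,n}$ by the equality case of Theorem \ref{teotrace}. Since $\Lambda\le\mathcal{Q}(u_\epsilon)$ for every $\epsilon>0$, letting $\epsilon\to0$ gives $\Lambda\le\Lambda_{m,n}$ and hence \eqref{desigualdad Jhova Aubin}.

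The main obstacle is purely technical: carrying out the Fermi-coordinate expansion with all error terms controlled uniformly in $\epsilon$ --- that is, estimating the integrals of $(\sqrt{\det g}-1)$, $(g^{ij}-\delta^{ij})$, the Taylor remainder of $\phi$, and the cutoff region $\{\chi\neq 1\}$ against $|\nabla w_{\epsilon,0}|^2$ and $|w_{\epsilon,0}|^{q}$, and verifying that each is $o(\epsilon^{-m})$ --- while keeping track of the logarithmic subtleties that appear in low dimensions when the model integrals $\int_{\R^n_+}W^2$ or $\int_{\partial\R^n_+}|W|^{q}$ sit at the borderline of convergence. Everything else is bookkeeping, and the exact cancellation of exponents makes the weighted case behave exactly like the classical unweighted one.
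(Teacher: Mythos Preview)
Your argument is correct and follows the classical Aubin--Escobar concentration scheme: transplant the sharp extremals $w_{\epsilon,0}$ of Theorem~\ref{teotrace} into Fermi coordinates at a boundary point, and check that all geometry-dependent corrections are of strictly lower order than the leading $\epsilon^{-m}$ terms, so that $\mathcal Q(u_\epsilon)\to\Lambda_{m,n}$. The scaling arithmetic you record is right, and the cancellation of the factors $e^{-\phi(p)}$ is exactly the conformal invariance of $\mathcal Q$ (Proposition~\ref{weighted Yamabe quotient conformal}); one could in fact invoke that proposition to reduce to $v\equiv 1$ from the start and avoid tracking $\phi(p)$ at all.

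The paper reaches the same conclusion by a somewhat different route. Rather than estimating $\mathcal Q(u_\epsilon)$ directly, it works with the $\mathcal W$-functional: it plugs the rescaled family $w_{0,\tau}$ of \eqref{funcion modelo} into $\mathcal W$ and proves (Lemma~\ref{limsup nu}) that $\limsup_{\tau\to 0}\nu(\tau)\le\nu[\R^n_+]$, then converts this into $\Lambda\le\Lambda_{m,n}$ via the dictionary of Proposition~\ref{W y Yamabe}. The underlying estimates in Fermi coordinates are essentially the same as yours (compare your error hierarchy with \eqref{r}, \eqref{h}, \eqref{gradiente 2epsilon}, \eqref{gradiente epsilon}), including the trichotomy for $\int w_{0,\tau}^2$ when $n=3$ and $m\le 1/2$ that you allude to as ``logarithmic subtleties''. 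Your direct approach is shorter and more transparent for Theorem~B taken in isolation; the paper's detour through $\mathcal W$ and $\nu$ pays off mainly because that machinery is already in place for other purposes (the variational structure in Propositions~\ref{W y Yamabe} and \ref{funcion realiza nu}, and presumably the attack on the Conjecture), so the proof of Theorem~B becomes a two-line corollary of Lemma~\ref{limsup nu}.

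One small caution: your claim that the curvature term is $O(\epsilon^{2-m})$ is only literally true when $\int_{\R^n_+}W^2<\infty$, i.e.\ when $2m+n>4$; in the borderline regime $n=3$, $m\le 1/2$ the actual order is $\epsilon^{1+m}$ (or $\epsilon^{3/2}\log(1/\epsilon)$ at $m=1/2$), as in \eqref{r}. This is still $o(\epsilon^{-m})$, so your conclusion stands, but in a fully written-out version you should state the correct order rather than sweep it under ``logarithmic corrections''.
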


We recall that  in the Escobar-Riemann mapping problem ($m=0$) if the inequality \eqref{desigualdad Jhova Aubin} is strict, it follows the existence of the minimizer. The same result is expected  for the Escobar-Riemann type problem. For that reason we conjecture that

\begin{conjetura}
Let $(M^n, g, e^{-\phi} dV_g, e^{-\phi} d\sigma_g, m)$ be a compact smooth metric measure space with boundary such that $m  \geq 0$ and

\begin{equation}
\Lambda[M^n, g, e^{-\phi} dV_g, m] < \Lambda[\R^n_{+}, dt^2 + dx^2 , dV, d\sigma, m] = \Lambda_{m, n}.
\end{equation}

Then there exists a positive function $w \in C^{\infty}(M)$ such that

\[
\mathcal{Q}(w) = \Lambda[M^n, g, e^{-\phi} dV_g, e^{-\phi} d \sigma_g, m ].
\]

\end{conjetura}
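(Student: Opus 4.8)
Since the case $\Lambda<0$ is already covered by Theorem A, we may assume $0\le\Lambda<\Lambda_{m,n}$. The plan is to follow the Aubin--Escobar scheme: realize a minimizer of $\mathcal Q$ as a limit of minimizers of subcritical problems, the strict inequality being used precisely to prevent loss of compactness in the limit.

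Write $p=\tfrac{2(m+n-1)}{m+n-2}$, and note $p>2$. For $2<s<p$ let $\mathcal Q_s$ be the quotient obtained from \eqref{quotient} by replacing the boundary exponent $p$ by $s$ and readjusting the outer exponents so that $\mathcal Q_s$ stays invariant under $w\mapsto cw$; put $\Lambda_s=\inf\mathcal Q_s$. Because $2<s<p$, the exponent $s$ is strictly subcritical for the weighted trace embedding $H^1(M,e^{-\phi}dV_g)\hookrightarrow L^s(\partial M,e^{-\phi}d\sigma_g)$, while $p<\tfrac{2n}{n-2}$ (when $n\ge3$) makes the interior integral compact as well; hence all embeddings entering $\mathcal Q_s$ are compact and the direct method produces a minimizer $w_s$. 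Normalizing $\int_{\partial M}|w_s|^s e^{-\phi}d\sigma_g=1$, one checks that $w_s$ solves the corresponding Euler--Lagrange problem (a critical weighted equation in $M$ with a subcritical Robin-type boundary condition), and elliptic regularity together with the strong maximum principle and the Hopf lemma show $w_s\in C^\infty(M)$ with $w_s>0$.

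Next, a fixed test function gives $\limsup_{s\to p^-}\Lambda_s\le\Lambda$, while H\"older's inequality on the compact manifold $\partial M$ gives $\liminf_{s\to p^-}\Lambda_s\ge\Lambda$, so $\Lambda_s\to\Lambda$. Using $\mathcal Q_s(w_s)=\Lambda_s$, the boundedness of the zeroth-order coefficients $R^m_\phi$ and $H^m_\phi$, and the above embeddings, one obtains a uniform $H^1$-bound on $\{w_s\}$ and may pass to a subsequence with $w_s\rightharpoonup w$ weakly in $H^1(M,e^{-\phi}dV_g)$ and strongly in $L^2(M)$, $L^2(\partial M)$ and in every $L^q$ with $q<p$ (on $\partial M$, and also on $M$ when $n\ge3$). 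The crucial point is to show $w\not\equiv0$. Assuming the contrary, a Brezis--Lieb decomposition of the numerator and of both integrals in the denominator shows that the mass of $|w_s|^p e^{-\phi}d\sigma_g$ escapes to finitely many points of $\partial M$; rescaling $w_s$ about such a point $p_0$ at the concentration scale, the metric flattens and, after normalizing by $e^{-\phi(p_0)}$ and $e^{-(m-1)\phi(p_0)/m}$, the three weighted forms converge to the flat forms on $\R^n_+$, so the rescaled functions converge locally to a nonzero $W$ on $\R^n_+$ whose Escobar quotient equals $\Lambda_{m,n}$ by Theorem \ref{teotrace}. This forces $\Lambda=\lim\Lambda_s\ge\Lambda_{m,n}$, contradicting the hypothesis; hence $w\not\equiv0$. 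Finally, lower semicontinuity of the numerator, strong convergence of the zeroth-order terms, continuity of the interior $L^p$ factor (for $n\ge3$) and Fatou for the boundary $L^p$ factor, which enters with a positive power, give $\mathcal Q(w)\le\lim\mathcal Q_s(w_s)=\Lambda$, so $\mathcal Q(w)=\Lambda$; regularity and the maximum principle then yield $w\in C^\infty(M)$ with $w>0$.

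The heart of the argument, and the expected main obstacle, is the no-concentration step. One must (i) choose the rescaling that simultaneously balances the boundary integral and the Case-type interior integral $\int_M|w|^p e^{-(m-1)\phi/m}dV_g$ --- they carry different powers of $e^{-\phi}$, so the normalization has to reproduce exactly the combination of volumes and $\Gamma$-factors appearing in \eqref{c trace}; (ii) establish the correct energy-additivity (concentration--compactness / Brezis--Lieb) statement for the \emph{coupled} functional, ruling out that energy split between a regular part and one or several bubbles could beat $\Lambda_{m,n}$; and (iii) control the contributions of the weighted scalar and mean curvature terms during the blow-up. A natural alternative to the subcritical scheme is to apply P.-L. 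Lions' concentration--compactness dichotomy directly to the measures $|w_i|^p e^{-\phi}d\sigma_g$ along a minimizing sequence, the strict inequality being once more exactly what excludes the vanishing and dichotomy alternatives.
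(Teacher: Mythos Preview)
The statement you are attempting to prove is explicitly labelled a \emph{Conjecture} in the paper and is left open; the paper contains no proof of it. So there is no proof to compare your proposal against. The paper only establishes the case $\Lambda<0$ (Theorem~A) and the non-strict Aubin-type inequality $\Lambda\le\Lambda_{m,n}$ (Theorem~B); the existence of a minimizer under the strict inequality is stated precisely as the open problem motivating the paper.

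Your outline follows the natural Aubin--Escobar subcritical approximation scheme, and you yourself correctly flag the genuine obstacles in the final paragraph. Two points deserve emphasis. First, the claim that $\liminf_{s\to p^-}\Lambda_s\ge\Lambda$ via H\"older on $\partial M$ is not immediate here: the quotient $\mathcal Q$ also involves the \emph{interior} integral $\int_M |w|^p e^{-(m-1)\phi/m}dV_g$ raised to the power $m/(m+n-1)$, and the interaction between the two exponents prevents a one-line monotonicity argument. Second, and more seriously, the blow-up step (i)--(ii) is where the actual work lies: unlike the classical Escobar problem, the functional couples a boundary critical term and an interior term carrying a \emph{different} weight $e^{-(m-1)\phi/m}$, and one must show that a single bubble on the half-space reproduces exactly $\Lambda_{m,n}$ and that no splitting can do better. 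This is the step that makes the statement a conjecture rather than a theorem; your proposal identifies it but does not resolve it.
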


This paper is organized as follows. In section \ref{General Trace Inequality}, we give a different  proof for Theorem \ref{teotrace} given in \cite{newtrace},   for the particular case $m \in \N \cup \{ 0 \}$.\footnote{After we posted on arXiv the previous version of this paper, we were informed by  Nguyen Van Hoang that Theorem \ref{teotrace} is a particular case of Theorem 18 in \cite{newtrace}.} In sections \ref{Smooth metric measure spaces with boundary} and \ref{Preliminaries},   we consider our notion smooth metric measure spaces with boundary and other concepts to introduce Escobar-Riemann type problem. In sections \ref{Theorem A} and \ref{Theorem B}, we prove Theorem A and B, respectively.

\section{General Trace Inequality}\label{General Trace Inequality}


In this section, we give a  proof for Theorem \ref{teotrace} in the case $m \in \N \cup \{ 0 \}$ different to the proof  in \cite{newtrace}. As we mentioned in the introduction, the Trace Gagliardo-Nirenberg-Sobolev inequality prepares the way to introduce our Escobar-Riemann type problem. The proof that we present depends on the Sobolev Trace Inequality in $\R^{n + 2m}$ and its minimizers. This kind of ideas are due to Bakry et al. (see \cite{Bakry}).

\begin{rem} In the case $m = 0$ in the inequality \eqref{general trace Jhova} we recover the Sobolev trace inequality (see \cite{Becknertrace}, \cite{Escobartrace}) 

\begin{equation}\label{Escobartrace inequality}
\Lambda_{0, n} \left( \int_{\partial \R^{n}_{+}} | w| ^{\frac{2(n -1)}{n - 2}} \right) ^{\frac{n -2}{n -1}}   \leq \left(
\int_{\R^{n}_{+}} |\nabla w|^{2}  \right),
\end{equation} where $\Lambda_{0, n} = \frac{n-2}{2} (vol(S^{n-1}))^{\frac{1}{n-1}}$. Equality in \eqref{Escobartrace inequality}  holds if and only if $w$ is a positive constant multiple of the functions of the form 

\begin{equation}\label{Escobar burbuja}
 w = \left(\frac{\epsilon}{(\epsilon + t)^2 + |x-x_0|^2} \right)^{\frac{n-2}{2}}.
\end{equation}
\end{rem}

\begin{lema}\label{minimizacion h}
Let $p, q, B, C$ be positive numbers and define $h(\tau) = B \tau^{p} + C\tau^{-q}$ for $\tau > 0$. Then $h$ attains the infimum in $\tau_0 = (\frac{qB}{pA})^{\frac{1}{p+q}}$ and 

\[
\inf_{\tau > 0}h(\tau) = h(\tau_0) = B^{\frac{q}{p+q}} C^{\frac{p}{p+q}} \left({\frac{q}{p}} \right)^{\frac{p}{p+q}} \left({\frac{q + p}{p}}\right).
\]

\end{lema}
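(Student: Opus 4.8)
The plan is to treat this as an elementary single-variable calculus problem: minimize the smooth function $h(\tau) = B\tau^p + C\tau^{-q}$ on $(0,\infty)$. First I would note that $h$ is positive, tends to $+\infty$ as $\tau \to 0^+$ (because of the $C\tau^{-q}$ term with $q>0$) and as $\tau \to \infty$ (because of the $B\tau^p$ term with $p>0$), and is differentiable on the open interval, so a global minimum is attained at an interior critical point. Computing $h'(\tau) = pB\tau^{p-1} - qC\tau^{-q-1}$ and setting it to zero gives $pB\tau^{p-1} = qC\tau^{-q-1}$, i.e. $\tau^{p+q} = \dfrac{qC}{pB}$, hence the unique critical point $\tau_0 = \left(\dfrac{qC}{pB}\right)^{1/(p+q)}$.

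Next I would confirm this is the minimum rather than a maximum or inflection, either by the second-derivative test ($h''(\tau) = p(p-1)B\tau^{p-2} + q(q+1)C\tau^{-q-2}$, which one checks is positive at $\tau_0$ after using the critical-point relation, or more cleanly by observing $h' < 0$ on $(0,\tau_0)$ and $h' > 0$ on $(\tau_0,\infty)$ since $h'(\tau)\tau^{q+1} = pB\tau^{p+q} - qC$ is strictly increasing in $\tau$). Then I would substitute $\tau_0$ back into $h$. Writing $\tau_0^p = \left(\dfrac{qC}{pB}\right)^{p/(p+q)}$ and $\tau_0^{-q} = \left(\dfrac{pB}{qC}\right)^{q/(p+q)}$, one gets
\[
h(\tau_0) = B\left(\tfrac{qC}{pB}\right)^{\frac{p}{p+q}} + C\left(\tfrac{pB}{qC}\right)^{\frac{q}{p+q}} = B^{\frac{q}{p+q}}C^{\frac{p}{p+q}}\left(\tfrac{q}{p}\right)^{\frac{p}{p+q}} + B^{\frac{q}{p+q}}C^{\frac{p}{p+q}}\left(\tfrac{q}{p}\right)^{-\frac{q}{p+q}}.
\]
Factoring out $B^{q/(p+q)}C^{p/(p+q)}(q/p)^{p/(p+q)}$ from both terms leaves $1 + (q/p)^{-1} = 1 + p/q = (p+q)/q$... and here I would be careful with the bookkeeping to land on the stated closed form $B^{\frac{q}{p+q}}C^{\frac{p}{p+q}}\left(\frac{q}{p}\right)^{\frac{p}{p+q}}\left(\frac{q+p}{p}\right)$, reconciling the exponent arithmetic (the second term's factor $(q/p)^{-q/(p+q)}$ times the pulled-out $(q/p)^{p/(p+q)}$ gives $(q/p)^{(p-q)/(p+q)}$, so the factoring must be organized slightly differently — pulling out $B^{q/(p+q)}C^{p/(p+q)}$ alone and then combining $(q/p)^{p/(p+q)} + (p/q)^{q/(p+q)}$ is the honest route).

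There is no real obstacle here; the only thing to watch is the purely algebraic simplification of the fractional powers at the end, and the typo in the statement (the denominator should read $pB$, not $pA$, in $\tau_0$) which I would silently correct. I would also remark that the statement is exactly the two-term weighted AM–GM / Young-type optimization, so an alternative one-line proof is: by weighted AM–GM applied to the two positive quantities with weights $\frac{q}{p+q}$ and $\frac{p}{p+q}$, $h(\tau) \geq \left(\frac{B\tau^p}{q/(p+q)}\right)^{q/(p+q)}\left(\frac{C\tau^{-q}}{p/(p+q)}\right)^{p/(p+q)}$ with equality iff $\frac{B\tau^p}{q} = \frac{C\tau^{-q}}{p}$, which both recovers $\tau_0$ and the value after simplification; but the direct calculus argument is cleaner to present and I would go with that.
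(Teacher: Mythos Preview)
Your proposal is correct and follows essentially the same route as the paper: observe that $h$ is continuous and blows up at both ends of $(0,\infty)$ so the infimum is attained at an interior critical point, compute $h'$ to find $\tau_0 = (qC/pB)^{1/(p+q)}$, and substitute back. Your hesitation in the final algebra is not a mistake on your part but reflects a second typo in the statement: the factoring genuinely gives $(q/p)^{p/(p+q)}\cdot\frac{p+q}{q}$, not $\frac{q+p}{p}$ (the paper's own proof ends with $\frac{p+q}{q}$, contradicting the lemma as stated), in addition to the $A$-for-$B$ typo you already caught.
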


\begin{proof} Since $h$ is a positive continuous function for $\tau > 0$ and

$$\lim \limits_{\tau \to 0^{+}} h(\tau) = \lim \limits_{\tau \to \infty} h(\tau) = \infty, 
$$ it follows that $h$ attains the infimum for some $\tau_0 > 0$. A direct computation shows that $h'(\tau) = \tau^{p-1}(pB - qC\tau^{-p-q})$. Therefore $\tau_0 = (\frac{qC}{pB})^{\frac{1}{p+q}}$ and

\begin{equation}
\begin{array}{ll}
h((\frac{qC}{pB})^{\frac{1}{p+q}}) &= B (\frac{qC}{pB})^{\frac{p}{p+q}} + C(\frac{qC}{pB})^{\frac{-q}{p+q}} \vspace{0.2cm} \\
&= B^{\frac{q}{p+q}} C^{\frac{p}{p+q}}(\frac{q}{p})^{\frac{p}{p+q}} + B^{\frac{q}{p+q}} C^{\frac{p}{p+q}}(\frac{q}{p})^{\frac{-q}{p+q}} \vspace{0.2cm} \\
&= B^{\frac{q}{p+q}} C^{\frac{p}{p+q}}(\frac{q}{p})^{\frac{p}{p+q}} (1 + \frac{p}{q}) \vspace{0.2cm} \\
&= B^{\frac{q}{p+q}} C^{\frac{p}{p+q}}(\frac{q}{p})^{\frac{p}{p+q}} (\frac{p+ q}{q}). 
\end{array}
\end{equation}

\end{proof}

\begin{rem} If $m \to \infty$, the inequality \eqref{general trace Jhova} takes the form

\begin{equation}\label{1 trace inequality}
\Lambda_{\infty, n} \left( \int_{\partial \R^{n}_{+}} | w| ^{2} \right) ^{2}  \leq \left( \int_{\R^{n}_{+}} |\nabla w|^{2}  \right) \left( \int_{\R^{n}_{+}} |w|^{2} \right)
\end{equation}

where $\lim \limits_{m \to \infty} \Lambda_{m, n} = \Lambda_{\infty, n}$.

The inequality \eqref{1 trace inequality} is equivalent to the  trace inequality \hbox{$H^1(M) \to L^2(\partial M)$}

\begin{equation}\label{2 trace inequality}
2 (\Lambda_{\infty, n})^{\frac{1}{2}} \left( \int_{\partial \R^{n}_{+}} | w| ^{2} dx \right)   \leq \int_{\R^{n}_{+}} |\nabla w|^{2}dxdt  + \int_{\R^{n}_{+}} |w|^{2} dxdt.
\end{equation}

In fact, suppose inequality \eqref{2 trace inequality} holds. For $\tau > 0$ define the function $w_{\tau}(x,t) = w(\frac{1}{\tau}(x, t))$. The change of variable $(y,s) = \frac{1}{\tau}(x, t)$ implies

\[
\int_{\partial \R^{n}_{+}} | w_{\tau}|^{2} (x, 0) dx = \tau^{n-1} \int_{\partial \R^{n}_{+}} | w|^{2}(y, 0)  dy,
\]

\[
\int_{\R^{n}_{+}} |\nabla w_{\tau}|^{2}(x,t) dxdt  = \tau^{n-2} \int_{\R^{n}_{+}} |\nabla w|^{2}(y,s)  dyds
\]

and 

\[
\int_{\R^{n}_{+}} | w_{\tau}|^{2}(x,t) dxdt  = \tau^{n} \int_{\R^{n}_{+}} | w|^{2} (y,s)  dyds.
\]

Then, using $w_{\tau}$ and the  equalities above in inequality \eqref{2 trace inequality} we get

\begin{equation}\label{3 trace inequality}
2(\Lambda_{\infty, n})^{\frac{1}{2}} \left( \int_{\partial \R^{n}_{+}} | w| ^{2}(y, 0) dy \right)   \leq \tau B  + \tau^{-1}C,
\end{equation}

where $B = \int_{\R^{n}_{+}} | w|^{2}(y,s)  dyds$ and $C = \int_{\R^{n}_{+}} |\nabla w|^{2}(y,s)  dyds$. Lemma \ref{minimizacion h} yields that for $\tau_0 = (\frac{C}{B})^{\frac{1}{2}}$, it holds

\begin{equation}\label{4 trace inequality}
 \tau_0 B  + \tau_0^{-1}C = 2B^{\frac{1}{2}}C^{\frac{1}{2}} = 2 \left( \int_{\R^{n}_{+}} |\nabla w|^{2} dxdt \right)^{\frac{1}{2}} \left( \int_{\R^{n}_{+}} |w|^{2} dxdt\right)^{\frac{1}{2}}.
\end{equation}
 
Since inequality \eqref{3 trace inequality} is true for every $\tau > 0$, in particular it is true for $\tau_0 = (\frac{C}{B})^{\frac{1}{2}}$ and by \eqref{4 trace inequality}, we have

\begin{equation}\label{5 trace inequality}
2 (\Lambda_{\infty, n})^{\frac{1}{2}} \left( \int_{\partial \R^{n}_{+}} | w| ^{2} \right)   \leq 2 \left( \int_{\R^{n}_{+}} |\nabla w|^{2}  \right)^{\frac{1}{2}} \left( \int_{\R^{n}_{+}} |w|^{2} \right)^{\frac{1}{2}},
\end{equation}

which is equivalent to \eqref{1 trace inequality}.

Now, suppose that inequality \eqref{1 trace inequality}  holds, then inequality \eqref{5 trace inequality} holds. In addition, inequality \eqref{2 trace inequality} is a consequence of inequality $2ab \leq a^2 + b^2$.

\end{rem}

In our proof for the Theorem \ref{teotrace}, we use the following Lemma, which was taken from \cite{CaseYamabe}.

\begin{lema}\label{integral Case}
Fix $k$, $l \geq 0$, $2m \in \mathbb{N}$, and constants $a$, $\tau > 0$. Then

\[
\int_{\R^{2m}} \dfrac{|y|^{2l} dy}{(a + \frac{|y|^{2}}{\tau}) ^{2m + k}} = \dfrac{\pi^{m}\Gamma(m+l)\Gamma(m+ k-l) \tau^{m+l}}{ \Gamma(m) \Gamma(2m+k) a^{m+k-l}}.
\]
\end{lema}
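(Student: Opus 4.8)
\textit{Proof proposal.} The plan is to reduce the integral over $\R^{2m}$ to a one-dimensional radial integral by passing to polar coordinates, and then to recognize that radial integral as a Beta function.

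First I would write $y = r\theta$ with $r = |y| \geq 0$ and $\theta \in S^{2m-1}$, so that $dy = r^{2m-1}\,dr\,d\theta$ and $\int_{S^{2m-1}} d\theta = \mathrm{Vol}(S^{2m-1}) = 2\pi^{m}/\Gamma(m)$. Since the integrand depends only on $|y|$, this yields
\[
\int_{\R^{2m}} \frac{|y|^{2l}\,dy}{\left(a + |y|^{2}/\tau\right)^{2m+k}} = \frac{2\pi^{m}}{\Gamma(m)} \int_{0}^{\infty} \frac{r^{2l+2m-1}\,dr}{\left(a + r^{2}/\tau\right)^{2m+k}}.
\]

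Next I would perform the substitution $s = r^{2}/(a\tau)$, so that $r = (a\tau s)^{1/2}$, $r^{2l+2m-1}\,dr = \tfrac{1}{2}(a\tau)^{l+m} s^{l+m-1}\,ds$, and $a + r^{2}/\tau = a(1+s)$. After cancelling the power of $a$ coming from the denominator against $(a\tau)^{l+m}$, one is left with
\[
\frac{\pi^{m}}{\Gamma(m)} \cdot \frac{\tau^{m+l}}{a^{m+k-l}} \int_{0}^{\infty} \frac{s^{(m+l)-1}}{(1+s)^{(m+l)+(m+k-l)}}\,ds,
\]
using that $2m+k = (m+l)+(m+k-l)$. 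Finally I would invoke the classical identity $\int_{0}^{\infty} s^{x-1}(1+s)^{-(x+y)}\,ds = B(x,y) = \Gamma(x)\Gamma(y)/\Gamma(x+y)$ with $x = m+l$ and $y = m+k-l$, which gives exactly the asserted formula.

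I do not expect any genuine obstacle; the computation is routine once polar coordinates and the Beta integral are brought in. The only point that needs a remark is convergence of the radial integral at infinity, which requires $2l+2m-1-2(2m+k) < -1$, i.e. $m+k-l > 0$; this, together with $m+l>0$ (automatic since $2m\in\N$ forces $m>0$), is precisely the range in which $B(m+l,m+k-l)$ is defined, and it is satisfied in every application of the lemma in this paper.
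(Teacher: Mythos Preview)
Your proof is correct; the polar-coordinates reduction followed by the Beta-integral identity $\int_0^\infty s^{x-1}(1+s)^{-(x+y)}\,ds = \Gamma(x)\Gamma(y)/\Gamma(x+y)$ is exactly the standard route, and your bookkeeping of the powers of $a$ and $\tau$ is accurate. Note that the paper itself does not supply a proof of this lemma at all: it is simply quoted from \cite{CaseYamabe} (``the following Lemma, which was taken from \cite{CaseYamabe}''), so there is no in-paper argument to compare against. Your closing remark on the convergence condition $m+k-l>0$ is a useful addition, since the stated hypotheses $k,l\geq 0$ do not by themselves guarantee it, though it does hold in every instance where the lemma is applied later in the paper.
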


\bigskip

\textbf{\textit{Proof of Theorem \ref{teotrace}}}. We are able to prove inequality \eqref{general trace Jhova} only for $m \in \mathbb{N}$. For this purpose, consider the inequality \eqref{Escobartrace inequality} for $\R^{n+2m}_+$. The   idea of the proof consists of using this inequality for the special function

\begin{equation}\label{f trace}
f(y, x, t) := \left(w^{\frac{- 2}{m + n - 2}} (x,t) + \frac{|y|^{2}}{\tau} \right)^{-\frac{2m+n-2}{2}} \in C^{\infty}(\R^{n + 2m}_{+}),
\end{equation}

where $ (x, t) \in \R^{n}_{+}$, $y \in \R^{2m}$ and $\tau > 0$.

Suppose $f$ is of the form \eqref{f trace}. First, we analyze the term on the left hand side of inequality  \eqref{Escobartrace inequality}.  Fixing $(x,t)$ we note that $f^{\frac{2(2m + n -1)}{2m + n -2}}$ takes the form of the function considered in  Lemma \ref{integral Case} with $a = w^{\frac{- 2}{m + n - 2}} (x,t)$. Fubini's Theorem, Lemma \ref{integral Case} with $k = n-1$ and $l = 0$, and some calculation yield 

\begin{equation}\label{bordo trace}
\int_{\partial \R^{2m + n}_{+}} f^{\frac{2(2m + n -1)}{2m + n -2}} dxdy= \dfrac{\pi^{m}\Gamma(m+n-1) \tau^{m}}{ \Gamma(2m + n - 1) } \int_{\partial \R^{n}_{+}} w^{\frac{2(m + n - 1)}{m + n - 2}}dx.
\end{equation}

In order to analyze  the term on the right  hand side of inequality \eqref{Escobartrace inequality},  we compute

\[
|\nabla f|^2 = \dfrac{ \left( \frac{2m + n - 2}{2} \right)^2 \left( \left(\dfrac{2}{m + n - 2} \right)^2  w^{- \frac{2(m + n)}{m + n -2}} |\nabla w|^{2} + 4 \frac{|y|^{2}}{\tau^2} \right)}{\left( w^{-\frac{2}{m + n - 2}} + \frac{|y|^{2}}{\tau} \right)^{2m + n}}.
\]

Lemma \ref{integral Case} leads to

\small

\begin{equation}\label{gradiente trace}
\begin{array}{ll}
\displaystyle \int_{\R^{2m + n}_{+}} |\nabla f|^2 dydxdt & = \displaystyle \left( \frac{2m + n - 2}{m + n -2} \right)^2 \left( \frac{\pi^m \tau^m \Gamma(m+n)}{\Gamma(2m + n)} \right) \int_{\R^{n}_{+}}| \nabla w|^{2} dxdt \vspace{0.2cm}\\
&  \displaystyle +  \left( \frac{m (2m + n - 2)^{2} \pi^{m} \tau^{m-1} \Gamma(m+n)}{ (m + n - 1) \Gamma(2m + n)} \right) \int_{\R^{n}_{+}}|w|^{\frac{2(m+n-1)}{m+n-2}}dxdt.
\end{array}
\end{equation}

\normalsize

Using equalities \eqref{bordo trace} and \eqref{gradiente trace} in inequality \eqref{Escobartrace inequality}, we get that

\begin{equation}\label{f evaluada Sobolev trace}
\begin{array}{r}
 \displaystyle \Lambda_{2m + n, 0}  \left( \dfrac{\pi^{m}\Gamma(m+n-1) \tau^{m}}{ \Gamma(2m + n - 1) } \int_{\partial \R^{n}_{+}} w^{\frac{2(m + n - 1)}{m + n - 2}} dx \right)^{\frac{2m + n - 2}{2m + n - 1}} \vspace{0.2cm}  \\
\displaystyle \leq  \left( \frac{2m + n - 2}{m + n -2} \right)^2 \left( \frac{\pi^m \tau^m \Gamma(m+n)}{\Gamma(2m + n)} \right) \int_{\R^{n}_{+}}| \nabla w|^{2} dxdt \vspace{0.2cm}\\
\displaystyle +  \left( \frac{m (2m + n - 2)^{2} \pi^m \tau^{m-1} \Gamma(m+n)}{ (m + n - 1) \Gamma(2m + n)} \right) \int_{\R^{n}_{+}}|w|^{\frac{2(m+n-1)}{m+n-2}} dxdt.
\end{array}
\end{equation}

Rewriting \eqref{f evaluada Sobolev trace}, we obtain

\begin{equation}\label{f evaluada Sobolev trace 2}
\begin{array}{r}
 \displaystyle \Lambda_{2m + n, 0}  \left( \dfrac{\pi^{m}\Gamma(m+n-1)}{ \Gamma(2m + n - 1) } \int_{\partial \R^{n}_{+}} w^{\frac{2(m + n - 1)}{m + n - 2}} dx \right)^{\frac{2m + n - 2}{2m + n - 1}} A  \leq  h(\tau),
\end{array}
\end{equation}

where

\[
A =  \frac{ \Gamma(2m + n)}{(2m + n - 2)^{2} \pi^m \Gamma(m+n)}, 
\]

\[
h(\tau) = B \tau^{\frac{m}{2m+n-1}} + C\tau^{-\frac{m+n-1}{2m+n-1}},
\]

\[
B = \displaystyle \frac{1}{(m + n -2)^2} \int_{\R^{n}_{+}}| \nabla w|^{2} dxdt, 
\]

and

\[
C = \displaystyle \frac{m }{m + n - 1} \int_{\R^{n}_{+}}|w|^{\frac{2(m+n-1)}{m+n-2}} dxdt.
\]

Lemma \ref{minimizacion h} implies that the function $h$ minimizes for $ \tau_0 = (\frac{(m+n-1)C}{mB})^{\frac{m+n-2}{2m+n-1}} $ and

\begin{equation}\label{f evaluada Sobolev trace 3}
\begin{array}{r}
 \displaystyle \Lambda_{2m + n, 0}  \left( \dfrac{\pi^{m}\Gamma(m+n-1)}{ \Gamma(2m + n - 1) } \int_{\partial \R^{n}_{+}} w^{\frac{2(m + n - 1)}{m + n - 2}} dx \right)^{\frac{2m + n - 2}{2m + n - 1}} A  \leq  h(\tau_0).
\end{array}
\end{equation}

Inequality \eqref{f evaluada Sobolev trace 3} proves inequality \eqref{general trace Jhova}  with $\Lambda_{m,n}$ as in \eqref{c trace}. Next, we characterize the functions that achieve equality in \eqref{general trace Jhova}. Note that for $\R^{n+2m}_+$ and $f$ defined in \eqref{f trace},  the  equality in \eqref{Escobartrace inequality}    holds if and only if

\[
f(y, x, t) =  \left( \frac{(t + \epsilon)^2 + |x-x_0|^2 + |y|^{2}}{\tau} \right)^{-\frac{2m+n-2}{2}},  \quad \text{for} \quad \tau > 0,
\]

i.e

$$w^{\frac{- 2}{m + n - 2}} (x,t) = \tau ((t + \epsilon)^{2} + |x-x_0|^{2}) $$ (see Escobar \cite{Escobartrace} and Beckner \cite{Becknertrace}). Then, the family of functions $\{w _{\epsilon, x_0} \}$  in \eqref{funcion trace} is the only one that satisfies the equality in \eqref{general trace Jhova}. $\bs$



\section{Smooth metric measure spaces with boundary and the conformal Laplacian}\label{Smooth metric measure spaces with boundary}

Our approach is based on  \cite{CaseYamabe} and \cite{CaseGNS}. The first step is to introduce the definition of a smooth metric measure space with boundary

\begin{defi}  Let  $(M^n, g)$ be a Riemannian manifold and let us denote by $dV_g$ and $d \sigma_g$ 
the volume form induced by $g$ in $M$ and $\partial M$, respectively. Set a function $\phi$ such that $\phi \in C^{\infty}(M)$ and $m \in [0, \infty)$ be a  dimensional parameter. In the case $m = 0$, we require that $\phi = 0$. A smooth metric measure space with boundary is the five-tuple  \hbox{$(M^n, g, e^{- \phi} dV_g, e^{- \phi} d \sigma_g, m )$}.
\end{defi}

As in \cite{CaseYamabe}, sometimes we  denote by the four-tuple $(M^n, g, v^{m} dV_g, v^{m} d\sigma_g)$ a smooth metric measure space where $v$ and $\phi$ are related by $v^{m} = e^{- \phi}$. We denote by $R_g$ the scalar curvature of $(M, g)$ and  $Ric$ and the Ricci tensor of $(M, g)$,  $\eta$ the outer normal on $\partial M$ and $\dfrac{\partial }{\partial \eta}$ the normal derivative. Also, we denote the second fundamental form, the trace of the second fundamental form, and the mean curvature on the boundary $\partial M$,  by $h_{ij}$, $H_g: = g^{ij}h_{ij}$, and $h_g = \frac{H_g}{n-1}$;  respectively. 

\begin{defi}
Given a smooth metric measure space \hbox{$(M^n, g, e^{- \phi} dV_g, e^{- \phi} d \sigma_g, m )$}. The   weighted scalar curvature $R^{m}_{\phi}$ and the Bakry-\'Emery Ricci curvature $Ric^{m}_{\phi}$ are the tensors

\begin{equation}
R^{m}_{\phi} := R_g + 2 \Delta \phi - \frac{m + 1}{m} |\nabla \phi|^2
\end{equation} and

\begin{equation}
Ric^{m}_{\phi} := Ric +  \nabla^{2} \phi - \frac{1}{m} d \phi \otimes d\phi.
\end{equation} 

\end{defi}

\begin{defi}\label{conformal definition}
Let $(M^n, g, e^{- \phi} dV_g, e^{- \phi} d \sigma_g, m )$ and
$(M^n, \hat{g}, e^{- \hat{\phi}} dV_{\hat{g}}, e^{- \hat{\phi}} d \sigma_{\hat{g}}, m )$ be smooth metric measure spaces with boundary. We say they are pointwise conformally equivalent if there is a function $\sigma \in C^{\infty}(M)$ such that

\small
\begin{equation}\label{def conformal}
(M^n, \hat{g}, e^{- \hat{\phi}} dV_{\hat{g}}, e^{- \hat{\phi}} d \sigma_{\hat{g}}, m )
= (M^n, e^{\frac{2 \sigma}{m + n - 2}}g, e^{\frac{m + n }{m + n - 2} \sigma} e^{- \phi} dV_g, e^{\frac{m + n - 1}{m + n - 2} \sigma}e^{- \phi} d \sigma_g, m ).
\end{equation}
\normalsize

$(M^n, g, e^{- \phi} dV_g, e^{- \phi} d \sigma_g, m )$ and
$(\hat{M}^n, \hat{g}, e^{- \hat{\phi}} dV_{\hat{g}}, e^{- \hat{\phi}} d \sigma_{\hat{g}}, m )$ are conformally equivalent if there
is a diffeomorphism $F : \hat{M} \to M$ such that the new smooth metric measure space with boundary
$(F^{-1}(M), F^{*}g, F^{*}(e^{- \phi} dV_g), F^{*}(e^{- \phi} d \sigma_g), m )$ is pointwise conformally equivalent to $(\hat{M}^n, \hat{g}, e^{- \hat{\phi}} dV_{\hat{g}}, e^{- \hat{\phi}} d \sigma_{\hat{g}}, m )$.
\end{defi}

\begin{defi} Given a smooth metric measure space $(M^n, g, e^{- \phi} dV_g,  m )$. The weighted Laplacian $\Delta_{\phi}: C^{\infty}(M) \to C^{\infty}(M)$ is an operator defined by

\[
\Delta_{\phi} u = \Delta u  - \nabla u  \cdot \nabla \phi
\] where $u \in C^{\infty}(M)$,  $\Delta$ is the usual Laplacian associated to the metric $g$ and $\nabla $ is gradient calculated in the metric $g$.
\end{defi}

\begin{defi}
 Given a smooth metric measure space $(M^n, g, e^{- \phi} dV_g, e^{- \phi} d \sigma_g, m )$. The
weighted conformal Laplacian $(L^m_{\phi}, B^m_{\phi})$ is given by the interior operator and boundary operator

\begin{equation}
\begin{array}{l}
L^m_{\phi} = - \Delta_{\phi	} + \dfrac{m+ n - 2}{4(m + n - 1)} R^{m}_{\phi} \quad \quad \text{in}  \quad M, \quad \quad \\
B^m_{\phi}  = \dfrac{\partial }{\partial \eta}  + \dfrac{m+ n - 2}{2(m + n - 1)}H^m_{\phi} \quad\quad\text{on} \quad \partial M.
\end{array}
\end{equation}

\end{defi}

\begin{pro}\label{conformal laplacian}
Let \small $(M^n, g, e^{- \phi} dV_g, m)$ \normalsize and \small $(M^n, \hat{g}, e^{- \hat{\phi}} dV_{\hat{g}},  m ) $ \normalsize be two pointwise conformally equivalent smooth metric measure space   such that $\hat{g} = e^{\frac{2 \sigma}{m + n - 2}}g$ 
and $\hat{\phi} = \frac{-m\sigma}{m+n-2} + \phi$. Let us denote by $L^m_{\phi}$ and $\hat{L}^m_{\hat{\phi}}$ their respective weighted conformal Laplacians. Similarly, we denote with hat all quantities computed with respect to the  smooth metric measure space $(M^n, \hat{g}, e^{- \hat{\phi}} dV_{\hat{g}}, e^{- \hat{\phi}} d \sigma_{\hat{g}}, m ) $. Then we have $\hat{v} =  e^{\frac{\sigma}{m+n-2}}v$ and the following transformation rules

\begin{equation}\label{cambio en laplaciano conforme}
\hat{L}^m_{\hat{\phi}} (w) = e^{-\frac{m + n  + 2}{2(m + n - 2)} \sigma} L^m_{\phi} (e^{ \frac{\sigma}{2}} w), \quad \quad  \quad \quad \hat{B}^{m}_{\hat{\phi}}(w) = e^{-\frac{m + n}{2(m + n - 2)} \sigma}B^{m}_{\phi}(e^{ \frac{\sigma}{2}} w).
\end{equation}

\end{pro}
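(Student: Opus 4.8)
## Proof Proposal

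The plan is to prove the conformal transformation rules \eqref{cambio en laplaciano conforme} by a direct computation, relying on the well-known scalar transformation laws for the (weighted) scalar curvature and the Gromov mean curvature under the conformal change, and then reducing everything to an algebraic identity in the exponents. First I would recall and verify the density and volume identities implicit in Definition \ref{conformal definition}: writing $\hat g = e^{\frac{2\sigma}{m+n-2}}g$ and $\hat\phi = \phi - \frac{m\sigma}{m+n-2}$, one checks directly that $e^{-\hat\phi}dV_{\hat g} = e^{\frac{m+n}{m+n-2}\sigma}e^{-\phi}dV_g$ and $e^{-\hat\phi}d\sigma_{\hat g} = e^{\frac{m+n-1}{m+n-2}\sigma}e^{-\phi}d\sigma_g$, so the two spaces are indeed pointwise conformally equivalent in the sense of \eqref{def conformal}; in particular $\hat v^m = e^{-\hat\phi} = e^{\frac{m\sigma}{m+n-2}}v^m$, giving $\hat v = e^{\frac{\sigma}{m+n-2}}v$.

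The core of the proof is the transformation of the weighted conformal Laplacian $L^m_\phi = -\Delta_\phi + \frac{m+n-2}{4(m+n-1)}R^m_\phi$. The strategy is to verify the analogue of the classical fact that the conformal Laplacian is conformally covariant. Concretely, I would establish the pointwise identity
\begin{equation}
\hat L^m_{\hat\phi}(w) = e^{-\frac{m+n+2}{2(m+n-2)}\sigma}\, L^m_\phi\!\left(e^{\frac{\sigma}{2}}w\right)
\end{equation}
by expanding the right-hand side: apply $\Delta_\phi = \Delta - \nabla\phi\cdot\nabla$ to the product $e^{\sigma/2}w$ using the product rule, so that $\Delta_\phi(e^{\sigma/2}w) = e^{\sigma/2}\big(\Delta_\phi w + (\nabla\sigma)\cdot(\nabla w) + \tfrac12 w\,\Delta_\phi\sigma + \tfrac14 w|\nabla\sigma|^2\big)$ (constants to be pinned down by the computation). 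Separately, the weighted scalar curvature obeys a transformation law of Yamabe type: under $\hat g = e^{2\rho}g$ with $\rho = \frac{\sigma}{m+n-2}$ and the compensating change in $\phi$, one has an identity of the schematic form $R^m_{\hat\phi} = e^{-2\rho}\big(R^m_\phi - c_1\Delta_\phi\rho - c_2|\nabla\rho|^2\big)$; this is exactly the content that makes $(L^m_\phi,B^m_\phi)$ the \emph{conformal} Laplacian in \cite{CaseYamabe} (it is the weighted analogue of $R_{\hat g} = e^{-2\rho}(R_g - 2(n-1)\Delta\rho - (n-1)(n-2)|\nabla\rho|^2)$, with $n$ replaced by the effective dimension $m+n$ in the appropriate places). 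Plugging both expansions in and using $\hat\Delta_{\hat\phi}w = e^{-2\rho}\big(\Delta_\phi w + (m+n-2)(\nabla\rho)\cdot(\nabla w)\big)$ — the weighted conformal transformation of the drift Laplacian — the first-order terms in $\nabla w$ must cancel and the zeroth-order terms must combine into $\frac{m+n-2}{4(m+n-1)}R^m_{\hat\phi}w$; verifying this cancellation is a matter of matching the three constants $c_1,c_2$ and the product-rule coefficients against the prefactor $\frac{m+n-2}{4(m+n-1)}$, which works out precisely because $\rho = \sigma/(m+n-2)$.

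For the boundary operator, the argument is entirely parallel but one dimension lower and with no curvature-of-the-interior terms: the Gromov mean curvature transforms as $H^m_{\hat\phi} = e^{-\rho}\big(H^m_\phi + (m+n-2)\frac{\partial\rho}{\partial\eta}\big)$ (the weighted version of $H_{\hat g} = e^{-\rho}(H_g + (n-1)\frac{\partial\rho}{\partial\eta})$), and $\frac{\partial}{\partial\hat\eta} = e^{-\rho}\frac{\partial}{\partial\eta}$ since $\hat\eta = e^{-\rho}\eta$. Expanding $B^m_\phi(e^{\sigma/2}w) = e^{\sigma/2}\big(\frac{\partial w}{\partial\eta} + \frac{w}{2}\frac{\partial\sigma}{\partial\eta} + \frac{m+n-2}{2(m+n-1)}H^m_\phi w\big)$ and comparing with $\hat B^m_{\hat\phi}(w) = \frac{\partial w}{\partial\hat\eta} + \frac{m+n-2}{2(m+n-1)}H^m_{\hat\phi}w$, the same constant-matching (again forced by $\rho = \sigma/(m+n-2)$) yields $\hat B^m_{\hat\phi}(w) = e^{-\frac{m+n}{2(m+n-2)}\sigma}B^m_\phi(e^{\sigma/2}w)$.

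The main obstacle is bookkeeping: one must derive (or carefully cite from \cite{CaseYamabe}, \cite{CaseGNS}) the weighted Yamabe-type transformation laws for $R^m_\phi$ and $H^m_\phi$ with the exact constants, and then confirm that the several batches of constants — from the product rule on $e^{\sigma/2}w$, from the drift-Laplacian conformal change, and from the curvature laws — conspire to cancel the $\nabla w$ terms and reassemble the curvature term with the right coefficient. There is no genuine analytic difficulty; the risk is purely in tracking the powers of $e^\sigma$ and the rational coefficients built from $m+n-1$ and $m+n-2$. I would organize the computation by first treating the case $m=0$ (where it reduces to Escobar's classical conformal covariance of the pair (conformal Laplacian, boundary operator)) as a sanity check, then carrying the $\Delta\phi$ and $|\nabla\phi|^2$ terms through in general.
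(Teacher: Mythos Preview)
Your approach is correct and is the standard direct-computation route to establishing conformal covariance of $(L^m_\phi, B^m_\phi)$. Note, however, that the paper does not actually prove this proposition: immediately after the statement it only remarks that the interior identity (the transformation law for $L^m_\phi$) appears in \cite{CaseYamabe}, and gives no argument at all for the boundary identity $\hat{B}^{m}_{\hat{\phi}}(w) = e^{-\frac{m + n}{2(m + n - 2)} \sigma}B^{m}_{\phi}(e^{ \frac{\sigma}{2}} w)$. So your proposal in fact supplies more than the paper does.

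Your outline for the boundary operator is exactly right and is short enough to execute in full: with $\rho = \frac{\sigma}{m+n-2}$, the identities $\frac{\partial}{\partial\hat\eta} = e^{-\rho}\frac{\partial}{\partial\eta}$ and $H^m_{\hat\phi} = e^{-\rho}\big(H^m_\phi + (m+n-1)\frac{\partial\rho}{\partial\eta}\big)$ (note the coefficient should be $m+n-1$, not $m+n-2$ as you wrote, matching the classical $n-1$ with $n$ replaced by $m+n$) combine with the product rule $\frac{\partial}{\partial\eta}(e^{\sigma/2}w) = e^{\sigma/2}\big(\frac{\partial w}{\partial\eta} + \frac{1}{2}\frac{\partial\sigma}{\partial\eta}w\big)$ to give the claim after one line of arithmetic. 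The only slip in your sketch is that coefficient; once corrected, the $\frac{\partial\sigma}{\partial\eta}$ terms match because $\frac{1}{2} = \frac{m+n-2}{2(m+n-1)}\cdot\frac{m+n-1}{m+n-2}$.
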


We mention that the identity in the left hand size of \eqref{cambio en laplaciano conforme} appears in \cite{CaseYamabe}. On the other hand, we denote by $(w, \varphi)_{M} = \int_{M} w. \varphi \, v^{m} dV_g$ the inner product in $L^{2}(M, v^{m} dV_g)$. Also, we denote by $||.||_{2,M}$ the norm in the  space $L^{2}(M, v^{m} dV_g)$, in some case we use the notation  $||.||$ for this norm. $H^1(M, v^{m} dV_g)$ denotes the closure of $C^{\infty} (M)$ with respect to the norm

\[
\int_{M}|\nabla w|^{2} + |w|^{2}.
\]

Here and subsequently the integrals  are computed using the measure $v^{m} dV_g$.

\section{Preliminaries for Escobar-Riemann type problem}\label{Preliminaries}

In this section, we define the weighted Escobar quotient which generalizes the quotient considered by Escobar in \cite{EscobarAn} and we consider a suitable $\mathcal{W}$-functional. In general, the weighted Escobar quotient is not necessarily finite. Similarly to \cite{CaseYamabe}, we define the energies of these functionals and we give some of their properties.

\subsection{ \bf The weighted Escobar quotient}

We start with the definition of the weighted Escobar quotient

\begin{defi}
Let $(M^n, g, v^{m} dV_g, v^{m} d \sigma_g)$ be a compact smooth metric measure space with boundary. The \textit{weighted Escobar quotient} $\mathcal{Q}: C^{\infty} (M) \to \R$ is defined by

\begin{equation}\label{Yamabe funcional}
\begin{array}{ll}
\mathcal{Q}(w) = & \dfrac{((L^{m}_{\phi} w, w)_M + (B^{m}_{\phi} w, w)_{\partial M}) (\int_{M}| w| ^{\frac{2(m + n - 1)}{m +n - 2}}v^{-1}) ^{\frac{m }{m +n -1}}}{ (\int_{\partial M } |w|^{\frac{2(m + n -1)}{m +n -2}} )^{\frac{2m + n -2}{m +n -1}}}.
\end{array}  
\end{equation}


The \textit{weighted Escobar constant} $\Lambda[M^n, g, v^{m} dV_g, v^{m} d \sigma_g] \in \R$ of the smooth metric measure space $(M^n, g, v^{m} dV_g, v^{m} d \sigma_g, m )$  is

\begin{equation}
\Lambda[M^n, g, v^{m} dV_g, v^{m} d \sigma_g, m ] = \inf \{  \mathcal{Q}(w) : w \in H^1(M, v^{m} dV_g, v^{m} d \sigma_g)\}.
\end{equation}

\end{defi}

\begin{rem}\label{simple notacion}
In some cases, when the context is clear, we will not write the dependence of the smooth metric measure space with boundary, for example we write 
$\mathcal{Q}$ and $\Lambda$ instead of $\mathcal{Q}[M^n, g, v^{m} dV_g, v^{m} d \sigma_g]$ and $\Lambda[M^n, g, v^{m} dV_g, v^{m} d \sigma_g]$, respectively. We note that since $C^{\infty} (M)$ is dense in $H^1(M, v^{m} dV_g)$ and $\mathcal{Q}(|w|) = \mathcal{Q}(w)$, it is sufficient to consider the weighted Escobar constant by minimizing over the space of non-negative smooth functions on $M$, subsequently we will do this assumption without further comment. 
\end{rem}

Now, note that the weighted Escobar quotient is  conformal in the sense of  Definition \ref{conformal definition}.

On the other hand, the weighted Escobar quotient satisfies similar properties to the weighted Yamabe quotient introduced by Case in \cite{CaseYamabe}, for example we observe that the weighted Escobar quotient is continuous in $m \in [0, \infty)$ and it is conformal in the sense of the Definition \ref{conformal definition}.

\begin{pro} Let $(M^{n}, g)$ be a compact Riemannian manifold with boundary. Fix $\phi \in C^{\infty} (M)$ and $m \in [0,\infty)$. Given any $w \in C^{\infty} (M)$, it holds that

\begin{equation}
\begin{array}{l}
\lim \limits_{k \to m} \mathcal{Q}[M^n, g, e^{- \phi} dV_g, e^{- \phi} d \sigma_g, k] (w) 
= \mathcal{Q}[M^n, g, e^{- \phi} dV_g, e^{- \phi} d \sigma_g, m](w).
\end{array}
\end{equation}

\end{pro}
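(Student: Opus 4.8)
The plan is to show that each ingredient appearing in the quotient $\mathcal{Q}[M^n,g,e^{-\phi}dV_g,e^{-\phi}d\sigma_g,k](w)$ depends continuously (in fact real-analytically) on the parameter $k\in[0,\infty)$, for a fixed smooth $w$, and then invoke the algebra of limits. First I would write the quotient explicitly in terms of $k$ using \eqref{quotient}: the numerator is
\[
\int_M\Bigl(|\nabla w|^2+\tfrac{k+n-2}{4(k+n-1)}R^k_\phi\, w^2\Bigr)e^{-\phi}dV_g+\int_{\partial M}\tfrac{k+n-2}{2(k+n-1)}H^k_\phi\, w^2\, e^{-\phi}d\sigma_g,
\]
and the denominator is a product of two powers of integrals. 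I would treat the cases $m>0$ and $m=0$ separately, since the definition of $R^m_\phi$ and the exponent $-\frac{(m-1)\phi}{m}$ are given by different formulas at $m=0$; the substantive claim is continuity at an interior point $m>0$, while continuity at the endpoint $m=0$ must be checked by hand against the imposed convention $\phi\equiv0$ (so that all the $\phi$-dependent terms and the problematic factor $e^{-(m-1)\phi/m}$ are identically $1$, and one simply checks the limit of the purely Riemannian quotient as $k\to0^+$).

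For $m>0$ the argument is routine: the coefficient functions $k\mapsto\frac{k+n-2}{4(k+n-1)}$, $k\mapsto\frac{k+n-2}{2(k+n-1)}$, $k\mapsto\frac{k+1}{k}$ (appearing in $R^k_\phi=R_g+2\Delta_g\phi-\frac{k+1}{k}|\nabla_g\phi|^2$), and the exponents $\frac{2(k+n-1)}{k+n-2}$, $\frac{2k+n-2}{k+n-1}$, $\frac{k}{k+n-1}$, $\frac{(k-1)\phi}{k}$ are all continuous in $k$ on any compact subinterval of $(0,\infty)$, with denominators bounded away from zero there. Hence the integrands $|\nabla w|^2+\tfrac{k+n-2}{4(k+n-1)}R^k_\phi w^2$, the boundary integrand, and $|w|^{2(k+n-1)/(k+n-2)}e^{-(k-1)\phi/k}$ converge pointwise to their values at $k=m$ and are uniformly bounded on $M$ (since $M$ is compact, $w,\phi\in C^\infty(M)$, and the exponents stay in a bounded range), so dominated convergence gives convergence of all the integrals. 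Then the denominator, being a product of fixed powers of strictly positive quantities (positive because $w\geq0$ is smooth and, by Remark~\ref{simple notacion}, may be taken not identically zero on $\partial M$; the degenerate case $w\equiv0$ on $\partial M$ is excluded since then $\mathcal{Q}$ is undefined), converges to the denominator at $k=m$ and is bounded away from $0$. Continuity of $\mathcal{Q}(w)$ in $k$ then follows from continuity of sums, products, powers, and quotients with nonvanishing denominator.

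The only genuine subtlety — the point I would flag as the main obstacle — is the behavior of the measure $e^{-(k-1)\phi/k}dV_g$ (equivalently the power $v^{-1}$ in \eqref{Yamabe funcional}) as $k\to0^+$, since $\frac{k-1}{k}\to-\infty$. When $m=0$ this is neutralized entirely by the standing convention $\phi\equiv0$, so there is nothing to prove beyond the elementary limit of rational functions of $k$ and of the exponent $\frac{2(k+n-1)}{k+n-2}\to\frac{2(n-1)}{n-2}$ in the remaining integrals; but one should state clearly that the proposition is about fixing a single $\phi$ (hence, if one wants the limit $k\to0$ to be meaningful with that $\phi$, one needs $\phi\equiv0$), and that for $m>0$ the limit $k\to m$ never sees the endpoint. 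I would therefore organize the proof as: (i) reduce to showing each of the (finitely many) integral functionals and the exponents is continuous in $k$; (ii) apply dominated convergence on the compact manifold $M$ and its boundary using the uniform bounds on integrands coming from smoothness of $w,\phi$ and boundedness of exponents; (iii) conclude by the elementary continuity of the algebraic operations, noting the denominator stays positive. No deep input is needed beyond dominated convergence and compactness; the write-up is essentially bookkeeping, and the honest remark to make is simply that the $m=0$ case is covered by convention rather than by a limiting computation.
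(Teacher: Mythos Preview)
Your argument is correct and is precisely the natural way to verify this continuity statement. The paper itself states this proposition without proof, merely remarking beforehand that ``the weighted Escobar quotient is continuous in $m\in[0,\infty)$''; your dominated-convergence argument on the compact manifold, together with the careful handling of the endpoint $m=0$ via the standing convention $\phi\equiv 0$, is exactly the routine bookkeeping the paper implicitly leaves to the reader.
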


\begin{pro}\label{weighted Yamabe quotient conformal}
Let $(M^n, g, v^{m} dV_g, v^{m} d \sigma_g)$ be a compact smooth metric measure space with boundary.
For any $ \sigma$, $w \in C^{\infty}(M) $ it holds that

\begin{equation}
\begin{array}{l}
\mathcal{Q}[M^n, e^{\frac{2}{m + n - 2} \sigma}g, e^{\frac{m + n}{m + n - 2} \sigma}v^{m} dV_g, e^{\frac{m + n - 1}{m + n - 2} \sigma} v^{m} d \sigma_g](w) \\
= \mathcal{Q}[M^n, g, v^{m} dV_g, v^{m} d \sigma_g](e^{\frac{\sigma}{2}} w).
\end{array}
\end{equation}

\end{pro}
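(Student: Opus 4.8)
The plan is to verify the conformal invariance of the weighted Escobar quotient by a direct computation, using the transformation rules already recorded in Proposition \ref{conformal laplacian} for the operators $(L^m_\phi, B^m_\phi)$ together with the way the relevant measures transform. Write $\hat g = e^{\frac{2\sigma}{m+n-2}} g$, $\hat\phi = \frac{-m\sigma}{m+n-2} + \phi$, so that $\hat v = e^{\frac{\sigma}{m+n-2}} v$ and, by Definition \ref{conformal definition}, $e^{-\hat\phi} dV_{\hat g} = e^{\frac{m+n}{m+n-2}\sigma} v^m dV_g$ and $e^{-\hat\phi} d\sigma_{\hat g} = e^{\frac{m+n-1}{m+n-2}\sigma} v^m d\sigma_g$. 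Set $u = e^{\frac{\sigma}{2}} w$. The strategy is to expand the numerator and the two integrals in the denominator of $\mathcal{Q}[\hat g,\dots](w)$ one at a time and check that all powers of $e^\sigma$ cancel, leaving exactly $\mathcal{Q}[g,\dots](u)$.

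First I would handle the numerator. By the first identity in \eqref{cambio en laplaciano conforme}, $\hat L^m_{\hat\phi} w = e^{-\frac{m+n+2}{2(m+n-2)}\sigma} L^m_\phi(e^{\frac{\sigma}{2}} w) = e^{-\frac{m+n+2}{2(m+n-2)}\sigma} L^m_\phi u$, so the interior term becomes
\[
(\hat L^m_{\hat\phi} w, w)_{\hat M} = \int_M \big(e^{-\frac{m+n+2}{2(m+n-2)}\sigma} L^m_\phi u\big)\, w\, e^{-\hat\phi} dV_{\hat g} = \int_M L^m_\phi u \cdot u \, v^m dV_g,
\]
since $e^{-\frac{m+n+2}{2(m+n-2)}\sigma} \cdot e^{-\frac{\sigma}{2}} \cdot e^{\frac{m+n}{m+n-2}\sigma} = 1$ (the last factor coming from $w = e^{-\sigma/2} u$ and the measure conversion). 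An identical computation with the second identity in \eqref{cambio en laplaciano conforme} and the conversion for $e^{-\hat\phi} d\sigma_{\hat g}$ shows the boundary term transforms to $(B^m_\phi u, u)_{\partial M}$; here the exponent bookkeeping is $-\frac{m+n}{2(m+n-2)} - \frac12 + \frac{m+n-1}{m+n-2} = 0$. Thus the numerator of $\mathcal{Q}[\hat g,\dots](w)$ equals $(L^m_\phi u, u)_M + (B^m_\phi u, u)_{\partial M}$.

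Next I would treat the two factors built from $L^{\frac{2(m+n-1)}{m+n-2}}$ norms. Since $|w|^{\frac{2(m+n-1)}{m+n-2}} = e^{-\frac{m+n-1}{m+n-2}\sigma}|u|^{\frac{2(m+n-1)}{m+n-2}}$, the boundary factor is
\[
\Big(\int_{\partial M} |w|^{\frac{2(m+n-1)}{m+n-2}} e^{-\hat\phi} d\sigma_{\hat g}\Big)^{\frac{2m+n-2}{m+n-1}} = \Big(\int_{\partial M} |u|^{\frac{2(m+n-1)}{m+n-2}} v^m d\sigma_g\Big)^{\frac{2m+n-2}{m+n-1}},
\]
because $-\frac{m+n-1}{m+n-2}\sigma + \frac{m+n-1}{m+n-2}\sigma = 0$ inside the integral. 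For the interior factor one uses $\hat v^{-1} v^m dV_g$-type bookkeeping: the weight in \eqref{Yamabe funcional} against $v^m dV_g$ is $\hat v^{-1} \, e^{-\hat\phi} dV_{\hat g}$, and $\hat v^{-1} = e^{-\frac{\sigma}{m+n-2}} v^{-1}$, which combined with $e^{-\frac{m+n-1}{m+n-2}\sigma}$ from $|w|^{\frac{2(m+n-1)}{m+n-2}}$ and $e^{\frac{m+n}{m+n-2}\sigma}$ from the measure conversion again cancels exactly; so this factor is unchanged when written in terms of $u$. Assembling the three pieces gives $\mathcal{Q}[\hat g,\dots](w) = \mathcal{Q}[g,\dots](u) = \mathcal{Q}[g,\dots](e^{\sigma/2} w)$, as claimed.

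The only genuine obstacle here is the exponent arithmetic — there are several competing powers of $e^\sigma$ (from $\hat L^m_{\hat\phi}$, from $u = e^{\sigma/2}w$, from $\hat v$, and from the three conformal factors on the measures), and one must be careful that the normalizations in Definition \ref{conformal definition} and Proposition \ref{conformal laplacian} are being used consistently (in particular matching the convention $e^{-\phi} = v^m$). Once the substitution $\hat v = e^{\frac{\sigma}{m+n-2}} v$ and the three measure conversions are in hand, each of the four multiplicative contributions cancels cleanly, so no analytic input beyond Proposition \ref{conformal laplacian} is needed; the proof is a bookkeeping verification, and I would present it as the short chain of three displayed identities above followed by multiplying them together.
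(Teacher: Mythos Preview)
Your proof is correct and follows essentially the same approach as the paper: verify that the two denominator integrals are invariant under the conformal transformation \eqref{transformacion conforme}, and use Proposition~\ref{conformal laplacian} to show the numerator $(L^m_\phi w,w)+(B^m_\phi w,w)$ is invariant as well. The paper simply states these invariances as the result of a straightforward computation, whereas you have spelled out the exponent bookkeeping in detail; there is no substantive difference in strategy.
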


\begin{proof} A straightforward computation shows that the integrals 

\begin{equation}
\int_{M} |w|^{\frac{2(m+n-1)}{m+n-2}} v^{m-1} dV_g  \quad \quad \text{and} \quad \quad \int_{\partial M} |w|^{\frac{2(m+n-1)}{m+n-2}} v^{m} d \sigma_g
\end{equation} are invariant under the conformal transformation

\begin{equation}\label{transformacion conforme}
(g, v^{m} dV_g, v^{m} d \sigma_g, w)  \to (e^{\frac{2}{m + n - 2} \sigma}g, e^{\frac{m + n}{m + n - 2} \sigma}v^{m} dV_g, e^{\frac{m + n - 1}{m + n - 2} \sigma} v^{m} d \sigma_g, e^{-\frac{\sigma}{2}} w ).
\end{equation}

By Proposition  \ref{conformal laplacian} the term $(L^{m}_{\phi} w, w) + (B^{m}_{\phi} w, w)$  is invariant under \eqref{transformacion conforme}. 

\end{proof}

Similar to the smooth metric measure spaces we have some behavior for the boundary volume. Note that in the boundary the integral $\int \limits_{\partial M} |w|^{\frac{2(m+n-1)}{m+n-2}} v^{m} d \sigma_g$ measure the boundary volume $\int_{\partial M} \hat{v}^{m} d \sigma_{\hat{g}}$ of

\begin{equation}
(M^n, \hat{g}, \hat{v}^{m} dV_{\hat{g}}, \hat{v}^{m} d \sigma_{\hat{g}}, m )
= (M^n, w^{\frac{4}{m + n - 2}}g, w^{\frac{2(m + n) }{m + n - 2}} v^{m} dV_g, w^{\frac{2(m + n - 1)}{m + n - 2} } v^{m} d \sigma_g, m ).
\end{equation}

Also with the same purpose, to simplify calculus and to avoid the trivial non-compactness of the weighted Escobar-Riemann type problem, we give the next definition of the volume-normalized on the boundary.

\begin{defi}\label{boundary normalized}
Let $(M^n,  g, v^{m} dV_g, v^{m} d \sigma_g)$ be a compact smooth metric measure space with boundary. We say
that a positive function $w \in C^{\infty}(M)$ is volume-normalized on the boundary if

\[
\int \limits_{\partial M} |w|^{\frac{2(m+n-1)}{m+n-2}} v^{m} d \sigma_g = 1.
\]

\end{defi}

\subsection{ \bf$\mathcal{W}$-functional}

We introduce a $\mathcal{W}$-functional with similar properties as the $\mathcal{W}$-functional considered by Case in \cite{CaseYamabe} and Perelman in \cite{Perelman}.

\begin{defi}
Let $(M^n,  g, v^{m} dV_g, v^{m} d \sigma_g)$ be a compact smooth metric measure space with boundary. The $\mathcal{W}$-functional, $\mathcal{W}: C^{\infty}(M) \times \R^{+} \to \R$, is defined by

\small

\begin{equation}
\begin{array}{l}
\mathcal{W}(w, \tau) =  \mathcal{W}[M^n,  g, v^{m} dV_g, v^{m} d \sigma_g](w, \tau) \vspace{0.2cm}\\
= \displaystyle \tau^{\frac{m}{2(m+n-1)}} \left( (L^{m}_{\phi} w, w) + (B^{m}_{\phi} w, w) \right) +  \int_{M} \tau^{-\frac{1}{2}} w^{\frac{2(m+n-1)}{m+n-2}}v^{-1} -  \int_{\partial M}  w^{\frac{2(m + n -1)}{m+n-2}}
\end{array}
\end{equation} \normalsize when $m \in [0, \infty) $.
\end{defi}

As the weighted Escobar  quotient and the $\mathcal{W}$-functional considered by Case in \cite{CaseYamabe}, the $\mathcal{W}$-functional defined before is continuous in $m$ and conformally invariant. Additionally, we have one scale invariant in the variable $\tau$.

\begin{pro}
Let $(M^n,  g, v^{m} dV_g, v^{m} d \sigma_g)$ be a compact smooth metric measure space with boundary. Then

\[
\lim_{k \to m} \mathcal{W}[M^n, g, e^{- \phi} dV_g, e^{- \phi} d \sigma_g, k](w, \tau) = \mathcal{W}[M^n, g, e^{- \phi} dV_g, e^{- \phi} d \sigma_g, m](w, \tau).
\]

\end{pro}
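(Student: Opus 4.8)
The statement to prove is the continuity in $m$ of the $\mathcal{W}$-functional, i.e.
\[
\lim_{k \to m} \mathcal{W}[M^n, g, e^{- \phi} dV_g, e^{- \phi} d \sigma_g, k](w, \tau) = \mathcal{W}[M^n, g, e^{- \phi} dV_g, e^{- \phi} d \sigma_g, m](w, \tau).
\]

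The plan is to inspect the explicit formula defining $\mathcal{W}(w,\tau)$ term by term and to verify that every ingredient depends continuously on the parameter $k$, then invoke the algebra of limits. First I would fix $w \in C^\infty(M)$, $\tau > 0$, and $\phi \in C^\infty(M)$, and note that $M$ is compact, so all integrals below are over finite measures and the integrands are bounded uniformly on compact sets of parameters. The three summands are (i) $\tau^{\frac{k}{2(k+n-1)}}\big((L^k_\phi w, w) + (B^k_\phi w, w)\big)$, (ii) $\int_M \tau^{-1/2} w^{\frac{2(k+n-1)}{k+n-2}} v^{-1}$, and (iii) $-\int_{\partial M} w^{\frac{2(k+n-1)}{k+n-2}}$, where $v^m = e^{-\phi}$. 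I would handle them in that order.

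For the exponent-type dependence, the key observation is that $k \mapsto \frac{k+n-1}{k+n-2}$ and $k \mapsto \frac{k}{2(k+n-1)}$ are continuous (indeed smooth) functions of $k$ on $[0,\infty)$ since the denominators $k+n-2$ and $k+n-1$ never vanish for $n \geq 3$; hence $\tau^{\frac{k}{2(k+n-1)}}$ converges to $\tau^{\frac{m}{2(m+n-1)}}$, and for each point of $M$ (where $w$ may be assumed nonnegative by Remark \ref{simple notacion}, so the power is well-defined) the integrand $w^{\frac{2(k+n-1)}{k+n-2}}$ converges pointwise to $w^{\frac{2(m+n-1)}{m+n-2}}$. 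Since $w$ is bounded on the compact $M$, these integrands are dominated by a constant, so dominated convergence (or simply uniform convergence of the integrand on $M$) gives continuity of the integrals (ii) and (iii) in $k$. The factor $v^{-1} = e^{\phi}$ in (ii) is independent of $k$ and bounded, so it causes no trouble.

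For term (i) I would expand $(L^k_\phi w, w) + (B^k_\phi w, w)$ using the definitions of $L^k_\phi$, $B^k_\phi$, $R^k_\phi$, and $H^k_\phi$: this produces $\int_M \big(|\nabla w|^2 + \tfrac{k+n-2}{4(k+n-1)} R^k_\phi w^2\big) e^{-\phi}\,dV_g + \int_{\partial M} \tfrac{k+n-2}{2(k+n-1)} H^k_\phi w^2 e^{-\phi}\,d\sigma_g$ after integration by parts (with the weighted Laplacian, an integration by parts identity $\int_M (\Delta_\phi w) w\, e^{-\phi} = -\int_M |\nabla w|^2 e^{-\phi} + \text{boundary term}$), where $R^k_\phi = R_g + 2\Delta_g \phi - \tfrac{k+1}{k}|\nabla_g \phi|^2$ and $H^k_\phi = H_g + \partial\phi/\partial\eta$. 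The only $k$-dependence sits in the rational coefficients $\tfrac{k+n-2}{4(k+n-1)}$, $\tfrac{k+n-2}{2(k+n-1)}$, and in $\tfrac{k+1}{k}$ inside $R^k_\phi$; each is continuous in $k$ on $(0,\infty)$, and combined with $\tau^{\frac{k}{2(k+n-1)}}$ the whole expression is a continuous function of $k$ times fixed bounded integrals. Multiplying and adding continuous functions of $k$ yields the claim.

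The one genuine subtlety — and the only place I would slow down — is the behavior near $k = 0$: the term $\tfrac{k+1}{k}|\nabla_g\phi|^2$ in $R^k_\phi$ blows up as $k \to 0^+$ unless one recalls the standing convention in the definition of a smooth metric measure space that $\phi \equiv 0$ when $m = 0$. So if the limit point is $m = 0$ one must argue separately: by definition $R^0_\phi = R_g$ and the quotient reduces to the Escobar case, and for $k > 0$ near $0$ we have $\phi \equiv 0$ as well (the underlying five-tuple is fixed), so $\tfrac{k+1}{k}|\nabla_g\phi|^2 \equiv 0$ identically and no blow-up occurs; for $m > 0$ the point $k = 0$ is irrelevant to the limit. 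With that caveat dispatched, the proof is just the statement that a finite sum of products of continuous scalar functions of $k$ with $k$-independent (or uniformly convergent) integrals is continuous, so one writes: \emph{the claimed identity follows by passing to the limit in each term, using compactness of $M$, dominated convergence for the integrals, and continuity on $[0,\infty)$ of the elementary functions $k \mapsto \tfrac{k+n-2}{k+n-1}$, $k \mapsto \tfrac{k}{k+n-1}$, and (for $m>0$) $k \mapsto \tfrac{k+1}{k}$.}
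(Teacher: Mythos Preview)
Your argument is correct in structure and spirit; the paper itself states this proposition without proof (it is treated as immediate, just like the analogous continuity statement for the weighted Escobar quotient), so there is nothing to compare against beyond noting that your term-by-term verification is exactly the routine check the author is implicitly leaving to the reader.

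One small slip to fix: you write ``the factor $v^{-1}=e^{\phi}$ in (ii) is independent of $k$''. This is not quite right. The convention in the paper is $v^{m}=e^{-\phi}$, so $v=e^{-\phi/m}$, and when the dimensional parameter is $k$ the relevant function is $v=e^{-\phi/k}$, hence $v^{-1}=e^{\phi/k}$ \emph{does} depend on $k$. Equivalently, the full integrand in term (ii) is $w^{\frac{2(k+n-1)}{k+n-2}}e^{-\frac{(k-1)\phi}{k}}\,dV_g$ (compare the denominator in the quotient \eqref{quotient}). This does not damage your proof: for $m>0$ the map $k\mapsto e^{\phi/k}$ is continuous and uniformly bounded on $M$ for $k$ in a compact neighborhood of $m$, so dominated convergence still applies; and at $m=0$ your observation that $\phi\equiv 0$ forces $e^{\phi/k}\equiv 1$ handles that endpoint as well. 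Just correct the identification and the argument stands.
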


\begin{pro}\label{weighted Yamabe energy conformal}
Let $(M^n,  g, v^{m} dV_g, v^{m} d \sigma_g)$ be a compact smooth metric measure space with boundary. The $\mathcal{W}$-functional is conformally invariant in its first component:

\begin{equation}\label{W conformal}
\begin{array}{l}
\mathcal{W}[M^n, e^{2 \sigma} g,  e^{(m+n)\sigma}v^{m} dV_g, e^{(m+n-1)\sigma}v^{m} d \sigma_g](w, \tau) \vspace{0.2cm} \\ 
= \mathcal{W}[M^n, g, v^{m} dV_g, v^{m} d \sigma_g](e^{\frac{(m+n -2)}{2}\sigma} w, \tau)
\end{array}
\end{equation} for all $\sigma$, $w \in C^{\infty}(M)$ and $\tau > 0$. It is scale invariant in its second component:

\begin{equation}\label{W c}
\begin{array}{l}
\mathcal{W}[M^n, cg, v^{m} dV_{cg}, v^{m} d \sigma_{cg}](w, \tau) \\
= \mathcal{W}[M^n, g, v^{m} dV_g, v^{m} d \sigma_g](c^{\frac{(n-1)(m+n-2)}{4(m+n -1)}} w, c^{-1} \tau).
\end{array}
\end{equation}

\end{pro}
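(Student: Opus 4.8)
The plan is to prove the two invariances separately, in each case decomposing the functional into its three natural pieces
\[
\mathcal{W}(w,\tau)=\tau^{\frac{m}{2(m+n-1)}}E(w)+\tau^{-\frac12}\!\int_M w^{p}\,v^{-1}-\int_{\partial M}w^{p},\qquad p:=\tfrac{2(m+n-1)}{m+n-2},
\]
where $E(w):=(L^m_\phi w,w)_M+(B^m_\phi w,w)_{\partial M}$ and all integrals are taken against $v^m dV_g$, resp. $v^m d\sigma_g$ (so the middle term is $\tau^{-1/2}\int_M w^p v^{m-1}dV_g$). Since the parameter $\tau$ is untouched by a conformal change of the underlying smooth metric measure space, the first identity follows as soon as the conformal behaviour of $E$ and of the two integrals is known, and this is precisely what was recorded — for the normalization $\hat g=e^{\frac{2}{m+n-2}\sigma}g$ — inside the proof of Proposition \ref{weighted Yamabe quotient conformal}.

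For the conformal invariance in the first slot, the only real point is a bookkeeping one: the conformal factor $e^{2\sigma}g$ written in \eqref{W conformal} is the factor $e^{\frac{2}{m+n-2}\sigma}g$ of Definition \ref{conformal definition} after the substitution $\sigma\mapsto(m+n-2)\sigma$, under which $v\mapsto e^{\sigma}v$ and, in the notation of that proof, the test function gets multiplied by $e^{-\frac{m+n-2}{2}\sigma}$. Making this substitution, Proposition \ref{conformal laplacian} (equivalently, the invariance of $(L^m_\phi w,w)+(B^m_\phi w,w)$ established in the proof of Proposition \ref{weighted Yamabe quotient conformal}) gives $E_{e^{2\sigma}g}(w)=E_g(e^{\frac{m+n-2}{2}\sigma}w)$, while the invariance of $\int_M|w|^p v^{m-1}dV_g$ and $\int_{\partial M}|w|^p v^m d\sigma_g$ noted in that same proof gives the identical statement for the bulk term (the factor $\tau^{-1/2}$ being an inert constant) and the boundary term. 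Adding the three pieces yields \eqref{W conformal}.

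For the scale invariance I would compute directly how each piece transforms under $g\mapsto cg$ with $\phi$, hence $v$, held fixed, using $dV_{cg}=c^{n/2}dV_g$, $d\sigma_{cg}=c^{(n-1)/2}d\sigma_g$, $|\nabla_{cg}w|^2_{cg}=c^{-1}|\nabla_g w|^2_g$, $(R^m_\phi)_{cg}=c^{-1}(R^m_\phi)_g$ and $(H^m_\phi)_{cg}=c^{-1/2}(H^m_\phi)_g$; after the Green-identity rewriting of $E$ as the numerator of $\mathcal{Q}$ in \eqref{quotient}, these give $E_{cg}(w)=c^{\frac{n-2}{2}}E_g(w)$, $\int_M w^p v^{m-1}dV_{cg}=c^{\frac n2}\int_M w^p v^{m-1}dV_g$ and $\int_{\partial M}w^p v^m d\sigma_{cg}=c^{\frac{n-1}{2}}\int_{\partial M}w^p v^m d\sigma_g$. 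On the other hand, replacing $(w,\tau)$ by $(c^{a}w,c^{-1}\tau)$ with $a=\frac{(n-1)(m+n-2)}{4(m+n-1)}$ multiplies the three pieces by $c^{2a-\frac{m}{2(m+n-1)}}$, $c^{\frac12+ap}$ and $c^{ap}$, respectively. Thus \eqref{W c} reduces to the two elementary identities $ap=\frac{n-1}{2}$ and $2a-\frac{m}{2(m+n-1)}=\frac{n-2}{2}$, the latter being $(n-1)(m+n-2)-m=(n-2)(m+n-1)$.

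I do not expect any analytic obstacle here: both parts are essentially algebraic. The step requiring the most care is the mismatch of conformal normalizations ($e^{2\sigma}$ in the statement versus $e^{\frac{2}{m+n-2}\sigma}$ in Definition \ref{conformal definition}), together with the exponent arithmetic in the second part, which collapses to the identity displayed above; once these are handled the proof is short.
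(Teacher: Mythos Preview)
Your proposal is correct and follows the same approach as the paper: invoke the term-by-term invariances already recorded in the proof of Proposition \ref{weighted Yamabe quotient conformal} for \eqref{W conformal}, and carry out the direct scaling computation for \eqref{W c}. The paper's own proof is a two-line reference to exactly these ingredients, so your write-up is simply a careful elaboration of what the paper leaves implicit (including the exponent arithmetic, which you have worked out correctly).
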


\begin{proof} The equality \eqref{W conformal} follows as in Proposition \ref{weighted Yamabe quotient conformal}  and the equality \eqref{W c} follows by a direct computation. \end{proof}

Since we are interested in minimizing the weighted Escobar quotient it is natural to define the following  energies as infima using the $\mathcal{W}$-functional and relating one of these energies with the weighted Escobar constant.

\begin{defi}
Let $(M^n,  g, v^{m} dV_g, v^{m} d \sigma_g)$ be a compact smooth metric measure space with boundary. Given $\tau > 0$, the $\tau$-energy $\nu[M^n, g, v^{m} dV_g, v^{m} d \sigma_g](\tau) $ is the number defined by

\begin{equation}
\begin{array}{l}
\nu(\tau) = \nu[M^n, g, v^{m} dV_g, v^{m} d \sigma_g](\tau) \\
= \inf \left\{ \mathcal{W} (w, \tau) : w \in H^1(M, v^{m} dV_g, v^{m} d \sigma_g),  \int_{\partial M} w ^{\frac{2(m+n-1)}{m+n-2}} = 1 \right\}.
\end{array}
\end{equation}

The energy $\nu[M^n, g, v^{m} dV_g, v^{m} d \sigma_g] \in  \R \cup \{ - \infty \}$ is defined by

\[
\nu = \nu[M^n, g, v^{m} dV_g, v^{m} d \sigma_g] = \inf_{\tau >0} \nu[g, v^{m} dV_g, v^{m} d \sigma_g](\tau).
\]
\end{defi}

The conformal invariance in the  $\mathcal{W}$-functional is transferred to the energies.

\begin{pro}
Let $(M^n,  g, v^{m} dV_g, v^{m} d \sigma_g)$ be a compact smooth metric measure space with boundary. Then

\[
\nu[M^n, ce^{2 \sigma} g,  e^{(m+n)\sigma}v^{m} dV_{cg}, e^{(m+n-1)\sigma}v^{m} d \sigma_{cg}] (c\tau) = \nu[M^n, g, v^{m} dV_g, v^{m} d \sigma_g](\tau),
\]

\[
\nu[M^n, ce^{2 \sigma} g,  e^{(m+n)\sigma}v^{m} dV_{cg}, e^{(m+n-1)\sigma}v^{m} d \sigma_{cg}] = \nu[M^n, g, v^{m} dV_g, v^{m} d \sigma_g]
\] for all $\sigma \in C^{\infty}(M)$ and  $c > 0$.
\end{pro}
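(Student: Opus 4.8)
The plan is to deduce the conformal and scaling invariances of the energy $\nu$ directly from the corresponding invariances of the $\mathcal{W}$-functional established in Proposition \ref{weighted Yamabe energy conformal}, keeping careful track of how the boundary-normalization constraint transforms. First I would fix notation: write $\hat{g} = ce^{2\sigma}g$, so that the target smooth metric measure space is $(M^n, \hat g, e^{(m+n)\sigma}v^m dV_{cg}, e^{(m+n-1)\sigma}v^m d\sigma_{cg})$, and combine \eqref{W conformal} and \eqref{W c} into a single transformation rule: applying first the scaling $g \mapsto cg$ (which replaces $w$ by $c^{\frac{(n-1)(m+n-2)}{4(m+n-1)}}w$ and $\tau$ by $c^{-1}\tau$) and then the conformal change with factor $\sigma$ (which replaces $w$ by $e^{\frac{m+n-2}{2}\sigma}w$ and leaves $\tau$ fixed), one obtains
\[
\mathcal{W}[M^n, ce^{2\sigma}g, e^{(m+n)\sigma}v^m dV_{cg}, e^{(m+n-1)\sigma}v^m d\sigma_{cg}](w, c\tau) = \mathcal{W}[M^n, g, v^m dV_g, v^m d\sigma_g]\!\left(c^{\frac{(n-1)(m+n-2)}{4(m+n-1)}} e^{\frac{m+n-2}{2}\sigma} w,\ \tau\right).
\]

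Next I would check that the map $\Phi: w \mapsto c^{\frac{(n-1)(m+n-2)}{4(m+n-1)}} e^{\frac{m+n-2}{2}\sigma} w$ carries the constraint set for the target space bijectively onto the constraint set for the source space. Concretely, the boundary volume form for the target is $e^{(m+n-1)\sigma}v^m d\sigma_{cg} = c^{\frac{n-1}{2}}e^{(m+n-1)\sigma}v^m d\sigma_g$; a direct computation (the same one already invoked in Proposition \ref{weighted Yamabe quotient conformal}, showing $\int_{\partial M}|w|^{\frac{2(m+n-1)}{m+n-2}}v^m d\sigma_g$ is conformally invariant) shows that $\int_{\partial M} w^{\frac{2(m+n-1)}{m+n-2}}$ computed in the target measure equals $\int_{\partial M} (\Phi w)^{\frac{2(m+n-1)}{m+n-2}}$ computed in the source measure, since the exponent $\frac{2(m+n-1)}{m+n-2}$ on the conformal factor $c^{\frac{n-1}{2(m+n-1)}\cdot\frac{m+n-1}{m+n-2}}\cdot e^{\cdots}$ is exactly tuned to cancel the change of measure. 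Moreover $\Phi$ is an isomorphism of $H^1$ (it is multiplication by a smooth positive function times a positive constant, and the function spaces over conformally related metrics coincide as sets). Hence taking the infimum over $w$ subject to $\int_{\partial M} w^{\frac{2(m+n-1)}{m+n-2}} = 1$ on the left equals the infimum over $\Phi w$ subject to the same constraint on the right, which gives
\[
\nu[M^n, ce^{2\sigma}g, e^{(m+n)\sigma}v^m dV_{cg}, e^{(m+n-1)\sigma}v^m d\sigma_{cg}](c\tau) = \nu[M^n, g, v^m dV_g, v^m d\sigma_g](\tau).
\]

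Finally, the second identity follows from the first by taking the infimum over $\tau > 0$: since $c > 0$ is fixed, the substitution $\tau \mapsto c\tau$ is a bijection of $(0,\infty)$ onto itself, so $\inf_{\tau>0}\nu[\hat g,\dots](c\tau) = \inf_{\tau'>0}\nu[\hat g,\dots](\tau') = \nu[\hat g,\dots]$ on one side equals $\inf_{\tau>0}\nu[g,\dots](\tau) = \nu[g,\dots]$ on the other. I expect the only real bookkeeping hazard to be verifying that the power of the constant $c$ appearing in $\Phi$, together with the power of $c$ in $d\sigma_{cg} = c^{(n-1)/2}d\sigma_g$, exactly balances under the exponent $\frac{2(m+n-1)}{m+n-2}$ so that the normalization constraint is genuinely preserved (and similarly that no stray power of $c$ survives in the volume-form constraint $\int_M |w|^{\frac{2(m+n-1)}{m+n-2}}v^{m-1}dV_g$ that appears implicitly through the quotient); this is the step I would write out in full, while the conformal part is immediate from the already-proven invariance of those integrals. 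Everything else is formal manipulation of infima.
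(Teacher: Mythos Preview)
Your proposal is correct and follows exactly the approach the paper indicates: the paper gives no explicit proof of this proposition, merely prefacing it with the sentence ``The conformal invariance in the $\mathcal{W}$-functional is transferred to the energies,'' so your argument---deducing the identities from Proposition \ref{weighted Yamabe energy conformal} and checking that the map $\Phi$ preserves the boundary-normalization constraint---is precisely the intended justification, spelled out in full.
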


The following proposition shows that it is equivalent to considering the energy instead of the weighted Escobar constant when the latter is positive. 

\begin{pro}\label{W y Yamabe}
Let $(M^n,  g, v^{m} dV_g, v^{m} d \sigma_g)$ be a compact smooth metric measure space with boundary and denote by $\Lambda$ and $\nu$ the weighted Escobar constant and the energy, respectively.

\begin{itemize}
\item $\Lambda \in [-\infty, 0)$ if and only if $\nu = -\infty$;
	
 \item $\Lambda = 0$ if and only if $\nu = -1$; and
 \item $\Lambda > 0$ if and only if $\nu > -1$. Moreover, in this case we have

\begin{equation}\label{nu y lambda}
	\nu = \frac{2m + n - 1}{m} \left[\frac{m \Lambda}{m+n-1}  \right]^{\frac{m+n-1}{2m+n-1}} - 1
\end{equation}

and $w$ is a volume-normalized minimizer of $\Lambda$ if and only if $(w, \tau)$ is a volume-normalized minimizer of $\nu$ for

\begin{equation}\label{nu lambda}
	\tau = \left[ \dfrac{m \int_{M}  w^{\frac{2(m+n-1)}{m+n-2}}v^{-1} }{(m + n - 1) ((L^{m}_{\phi} w, w) + (B^{m}_{\phi} w, w))} \right]^{\frac{m+n-1}{2(2m + n -1)}}.
\end{equation}

\end{itemize}

\end{pro}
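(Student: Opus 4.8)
The plan is to reduce everything to the one–variable minimization from Lemma \ref{minimizacion h}. For a fixed non-negative $w \in H^1(M, v^m dV_g)$ with $\int_{\partial M} w^{\frac{2(m+n-1)}{m+n-2}} = 1$, write $E = (L^m_\phi w,w) + (B^m_\phi w,w)$ and $F = \int_M w^{\frac{2(m+n-1)}{m+n-2}} v^{-1}$, so that by definition $\mathcal{Q}(w) = E F^{\frac{m}{m+n-1}}$ (the boundary factor being $1$) and
\[
\mathcal{W}(w,\tau) = \tau^{\frac{m}{2(m+n-1)}} E + \tau^{-\frac{1}{2}} F - 1 .
\]
First I would apply Lemma \ref{minimizacion h} with $p = \frac{m}{2(m+n-1)}$, $q = \frac12$, $B = E$, $C = F$ — valid when $E > 0$, $F > 0$ — to compute $\inf_{\tau>0}\mathcal{W}(w,\tau)$ explicitly. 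The minimizer is exactly the $\tau$ in \eqref{nu lambda}, and a short algebraic simplification of $h(\tau_0)$ from the lemma gives
\[
\inf_{\tau>0}\mathcal{W}(w,\tau) = \frac{2m+n-1}{m}\left[\frac{m}{m+n-1}\,\mathcal{Q}(w)\right]^{\frac{m+n-1}{2m+n-1}} - 1 ,
\]
using $\mathcal{Q}(w) = E F^{\frac{m}{m+n-1}}$ to collect the powers of $E$ and $F$. The map $t \mapsto \frac{2m+n-1}{m}\big(\frac{m}{m+n-1}t\big)^{\frac{m+n-1}{2m+n-1}} - 1$ is continuous and strictly increasing on $(0,\infty)$; call it $\Psi$. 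Then taking infimum over admissible $w$ with $E>0$ gives $\nu = \Psi(\Lambda)$ whenever $\Lambda > 0$, which is \eqref{nu y lambda}, and the correspondence of minimizers follows because $\Psi$ is a strictly monotone bijection: $w$ minimizes $\mathcal{Q}$ iff $\inf_\tau \mathcal{W}(w,\tau)$ equals $\nu$, and the optimal $\tau$ is the one in \eqref{nu lambda}.

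The main obstacle is the sign bookkeeping and the degenerate cases, which is what the three–way dichotomy is really about. If $\Lambda < 0$ there is an admissible $w$ with $E < 0$; then for that fixed $w$, as $\tau \to \infty$ the term $\tau^{\frac{m}{2(m+n-1)}}E \to -\infty$ (here $m>0$, since $m=0$ forces $\phi=0$ and the quotient is the classical one where $F$-type normalization degenerates — this case needs to be handled or excluded separately, noting $\nu$ reduces to $E-1$ and the claim $\Lambda<0 \Leftrightarrow \nu=-\infty$ is immediate), so $\nu(\tau) \to -\infty$ and hence $\nu = -\infty$; conversely if every admissible $w$ has $E \geq 0$ then $\mathcal{W}(w,\tau) \geq -1$ for all $\tau$, so $\nu \geq -1 > -\infty$. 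For $\Lambda = 0$: every admissible $w$ has $E \geq 0$ so $\nu \geq -1$, and choosing a minimizing sequence $w_j$ with $E_j \to 0$ (keeping $F_j$ controlled, e.g. bounded above along the sequence, which one gets from the normalization together with $E_j$ bounded and a Sobolev/trace estimate) and sending $\tau \to \infty$ appropriately drives $\mathcal{W}(w_j,\tau)$ to $-1$, giving $\nu = -1$; conversely $\nu = -1$ forces $\inf_\tau \mathcal{W}(w,\tau) \geq -1$ with equality approached, which by the computation above forces $\Lambda \leq 0$, and combined with $\nu \neq -\infty$ (hence $\Lambda \geq 0$) gives $\Lambda = 0$.

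Finally I would assemble the equivalences in the stated order, each direction following from the two computations above (the explicit formula for $\inf_\tau \mathcal{W}(w,\tau)$ when $E,F>0$, and the $\tau\to\infty$ asymptotics when $E\le 0$) together with strict monotonicity of $\Psi$. The minimizer correspondence in the $\Lambda>0$ case is then just: $(w,\tau)$ is a volume-normalized minimizer of $\nu$ iff $w$ achieves $\Lambda$ and $\tau$ is the unique critical point of $\tau \mapsto \mathcal{W}(w,\tau)$, namely \eqref{nu lambda}. I expect the genuinely delicate point to be justifying the passage to the infimum in the $\Lambda = 0$ case — ensuring one can simultaneously make $E_j$ small and keep the $\tau^{-1/2}F_j$ term from blowing up — which I would handle by first using conformal/scale invariance (Proposition \ref{weighted Yamabe energy conformal}) to normalize, or by a diagonal argument choosing $\tau_j \to \infty$ slowly relative to $F_j$.
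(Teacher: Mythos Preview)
Your approach is essentially the paper's: both reduce to Lemma~\ref{minimizacion h} to compute $\inf_{\tau>0}\mathcal{W}(w,\tau)$ for fixed volume-normalized $w$, observe this equals $\Psi(\mathcal{Q}(w))$ for a strictly increasing $\Psi$, and handle the sign cases by the $\tau\to\infty$ asymptotics when $E\le 0$. The paper applies the lemma after the substitution $x=\tau^{1/2}$ (so the exponents become $\tfrac{m}{m+n-1}$ and $-1$), while you apply it directly in $\tau$; these are of course equivalent.

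Your one stated worry --- the $\Lambda=0$ case, where you anticipate needing to control $F_j$ separately or run a diagonal argument --- is unnecessary, and the paper's treatment is cleaner here. The point you are missing is that the identity
\[
\inf_{\tau>0}\mathcal{W}(w,\tau)+1 \;=\; \frac{2m+n-1}{m}\left[\frac{m}{m+n-1}\,E\,F^{\frac{m}{m+n-1}}\right]^{\frac{m+n-1}{2m+n-1}}
\]
depends on $E$ and $F$ \emph{only through the product} $\mathcal{Q}(w)=E\,F^{m/(m+n-1)}$, and extends continuously to $E=0$ (both sides vanish: on the left because $\tau^{-1/2}F\to 0$ as $\tau\to\infty$). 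Hence for any minimizing sequence $w_j$ with $\mathcal{Q}(w_j)\to 0^+$ one gets $\inf_\tau\mathcal{W}(w_j,\tau)\to -1$ directly; the optimal $\tau_j$ adjusts itself, and no independent bound on $F_j$ is required. This is exactly what the paper does when it remarks that the minimization formula ``is achieved in the case $A=0$'' and then passes to minimizing sequences.
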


\begin{proof} If $\Lambda \in [-\infty, 0)$ then there is a volume-normalized function $w \in C^{\infty}(M)$ such that $(L^{m}_{\phi} w, w) + (B^{m}_{\phi} w, w) < 0$. Then, it is clear that $\mathcal{W}(w, \tau) \to -\infty$  as $\tau  \to \infty$ and it follows that $\nu = - \infty$. Reciprocally, if $\nu = - \infty$ there exist  a volume-normalized function  $w$ and $\tau >0$ such that $\mathcal{W}(w, \tau) < -1$, it follows that $(L^{m}_{\phi} w, w) + (B^{m}_{\phi} w, w) < 0$ and $\Lambda \in [-\infty, 0)$.

Suppose $\Lambda \geq 0$. Lemma \ref{minimizacion h} shows that if $A$, $B > 0$, then

\begin{equation}\label{Yamabe y nu}
\inf_{x > 0} \{ A x^{\frac{m}{m+n-1}} + Bx^{-1} \} = \frac{2m + n - 1}{m} \left[\frac{m}{m + n - 1} A B^{\frac{m}{m+n-1}} \right]^{\frac{m+n-1}{2m+n-1}}
\end{equation} for all $x > 0$, with equality if and only if

\begin{equation}\label{igualdad Yamabe y nu}
x= \left[ \dfrac{m B}{(m + n - 1) A} \right]^{\frac{m+n-1}{2m + n -1}}.
\end{equation} Notes that equality  \eqref{Yamabe y nu} is achieved in the case $A = 0$. Then,  from equality \eqref{Yamabe y nu}, the definitions of $\Lambda$ and $\nu$ and taking minimizing sequences of these infima we get the remain equivalences. When $\Lambda > 0$ using \eqref{Yamabe y nu} and \eqref{igualdad Yamabe y nu} we get that \eqref{nu y lambda} and \eqref{nu lambda} holds. \end{proof}

\subsection{ \bf Variational formulae for the weighted energy functionals}

The next proposition contains the computation of the Euler-Lagrange equations of the minimizing of weighted Escobar quotient. We will use it in the proof of Theorem A on the regularity part.   

\begin{pro}\label{ecuacion minimizador Yamabe}
Let $(M^n,  g, v^{m} dV_g, v^{m} d \sigma_g)$ be a compact smooth metric measure space with boundary and suppose that $0 \leq w \in H^{1}(M)$ is a volume-normalized minimizer of the weighted Escobar constant $\Lambda$. Then $w$ is a weak solution of

\begin{equation}
\begin{array}{ll}\label{Euler Lagrange Escobar quotient}
L^{m}_{\phi}w + c_{1}w^{\frac{m+n}{m+n-2}} v^{-1} = 0, &\quad \text{in} \quad M, \\
B^{m}_{\phi}w = c_{2}w^{\frac{m+n}{m+n-2}}, &\quad \text{on} \quad \partial M \\
\end{array}
\end{equation} where

\[
c_1 = \frac{m \Lambda}{m+n-2} \left(\int_{M}w^{\frac{2(m+n-1)}{m+n-2}}v^{-1} \right)^{-\frac{2m + n - 1}{m+n-1} }
\] and

\[
c_2 = \frac{(2m+n-2) \Lambda}{m+n-2} \left(\int_{M}w^{\frac{2(m+n-1)}{m+n-2}}v^{-1} \right)^{-\frac{m}{m+n-1} }.
\]
\end{pro}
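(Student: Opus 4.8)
The plan is to derive the Euler--Lagrange equations by computing the first variation of $\mathcal{Q}$ at a minimizer. Since $w \geq 0$ is a volume-normalized minimizer, for any test function $\varphi \in C^{\infty}(M)$ the function $t \mapsto \mathcal{Q}(w + t\varphi)$ has a critical point at $t = 0$, so $\frac{d}{dt}\big|_{t=0}\mathcal{Q}(w+t\varphi) = 0$. I would first abbreviate the three building blocks of the quotient: set $E(w) = (L^{m}_{\phi}w,w)_M + (B^{m}_{\phi}w,w)_{\partial M}$, $I(w) = \int_M w^{\frac{2(m+n-1)}{m+n-2}} v^{-1}$, and $J(w) = \int_{\partial M} w^{\frac{2(m+n-1)}{m+n-2}}$, so that $\mathcal{Q}(w) = E(w)\, I(w)^{\frac{m}{m+n-1}} J(w)^{-\frac{2m+n-2}{m+n-1}}$, and recall $J(w) = 1$ and $\mathcal{Q}(w) = \Lambda$ at the minimizer.

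Next I would record the three elementary derivatives. Using the symmetry of $(L^m_\phi, B^m_\phi)$ one has $\frac{d}{dt}\big|_{t=0} E(w+t\varphi) = 2\big[(L^m_\phi w,\varphi)_M + (B^m_\phi w,\varphi)_{\partial M}\big]$; differentiating the power nonlinearities gives $\frac{d}{dt}\big|_{t=0} I(w+t\varphi) = \frac{2(m+n-1)}{m+n-2}\int_M w^{\frac{m+n}{m+n-2}} \varphi\, v^{-1}$ and likewise $\frac{d}{dt}\big|_{t=0} J(w+t\varphi) = \frac{2(m+n-1)}{m+n-2}\int_{\partial M} w^{\frac{m+n}{m+n-2}} \varphi$. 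Applying the product/chain rule to $\mathcal{Q}(w) = E\cdot I^{\frac{m}{m+n-1}} J^{-\frac{2m+n-2}{m+n-1}}$, evaluating at the minimizer where $J = 1$, $E = \Lambda\, I^{-\frac{m}{m+n-1}}$, and dividing through by the common nonzero factor $I^{\frac{m}{m+n-1}}$, the vanishing of the first variation becomes
\begin{equation*}
2\big[(L^m_\phi w,\varphi)_M + (B^m_\phi w,\varphi)_{\partial M}\big] + \frac{m}{m+n-1}\,\Lambda\, I^{-1}\,\frac{2(m+n-1)}{m+n-2}\int_M w^{\frac{m+n}{m+n-2}}\varphi\, v^{-1} - \frac{2m+n-2}{m+n-1}\,\Lambda\,\frac{2(m+n-1)}{m+n-2}\int_{\partial M} w^{\frac{m+n}{m+n-2}}\varphi = 0.
\end{equation*}
Rearranging and reading off the interior and boundary terms separately (choosing $\varphi$ first compactly supported in the interior, then general) yields the weak formulation with the constants
\[
c_1 = \frac{m\Lambda}{m+n-2}\, I^{-1}, \qquad c_2 = \frac{(2m+n-2)\Lambda}{m+n-2},
\]
and since the normalization $J(w)=1$ forces no such factor but $I^{-1} = \left(\int_M w^{\frac{2(m+n-1)}{m+n-2}} v^{-1}\right)^{-1}$, one matches $c_1$ to the stated form after reinserting the $I^{-\frac{m}{m+n-1}}$ bookkeeping; a careful tracking of the exponents of $I$ gives exactly the $-\frac{2m+n-1}{m+n-1}$ and $-\frac{m}{m+n-1}$ powers claimed.

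The only genuinely delicate point is justifying differentiation under the integral sign and the validity of the first-variation argument at a merely $H^1$ minimizer: the nonlinear integrands involve the subcritical-at-worst exponent $\frac{2(m+n-1)}{m+n-2}$, so I would invoke the trace and Sobolev embeddings (Theorem \ref{teotrace} and its compact Riemannian analogue) to guarantee $I(w+t\varphi)$ and $J(w+t\varphi)$ are finite and differentiable in $t$, and dominated convergence to pass the derivative inside; the nonnegativity of $w$ plays no role in the EL computation itself beyond ensuring $w^{\frac{m+n}{m+n-2}}$ is well-defined. I expect the main obstacle to be purely bookkeeping: keeping the fractional powers of $I$ and the constant $\frac{2(m+n-1)}{m+n-2}$ straight so that the final $c_1, c_2$ come out exactly as stated; there is no conceptual difficulty, and the regularity (smoothness of $w$) is deferred to the proof of Theorem A and not part of this proposition.
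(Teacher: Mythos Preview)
Your approach is correct and is exactly what the paper's one-line proof (``follows immediately from the fact that the conformal Laplacian is self-adjoint, and the definition of the weighted Escobar constant'') intends: a standard first-variation computation of $\mathcal{Q}$. Your displayed Euler--Lagrange identity has a minor slip---the factor $I^{-\frac{m}{m+n-1}}$ coming from the substitution $E = \Lambda\, I^{-\frac{m}{m+n-1}}$ is dropped in the $I'$ and $J'$ terms---but you already flag this and correctly note that restoring it produces the stated exponents $-\tfrac{2m+n-1}{m+n-1}$ and $-\tfrac{m}{m+n-1}$ on $I$ in $c_1$ and $c_2$.
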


\begin{proof} This proposition follows immediately from the fact that the conformal Laplacian is self-adjoint, and the definition of the weighted Escobar constant. \end{proof}

\begin{rem}
If $\Lambda = 0$ then we have in the proposition above that $c_1 = 0$, $c_2 = 0$. In this case, it follows that the equations in \eqref{Euler Lagrange Escobar quotient} coincide with the equations for finding a new conformal smooth metric measure space such that $\hat{R}^{m}_{\phi} \equiv 0$ and $\hat{H}^{m}_{\phi} \equiv 0$. Moreover, the problem to find a conformal smooth metric measure space with $\hat{R}^{m}_{\phi} \equiv 0$ and $\hat{H}^{m}_{\phi} \equiv C$ is solved by a direct compact argument on the functional

\[
\check{Q}(w) = \dfrac{(L^{m}_{\phi} w, w)_M + (B^{m}_{\phi} w, w)_{\partial M} }{ (\int_{\partial M } |w|^{\frac{2(m+n-1)}{m +n -2}} )^{\frac{m + n -2}{m+n-1}}}
\] due to $\frac{2(m+n-1)}{m +n -2}< \frac{2(n-1)}{n -2}$ for $m>0$.
\end{rem}

Next, we consider the Euler Lagrange equation on the $\mathcal{W}$-functional and  we will use it in the proof of Theorem B.

\begin{pro}\label{funcion realiza nu}
Let $(M^n,  g, v^{m} dV_g, v^{m} d \sigma_g)$ be a compact smooth metric measure space with boundary, fix
$\tau > 0$, and suppose that $w \in H^{1}(M)$ is a non-negative critical point of the map $\xi \to \mathcal{W}(\xi, \tau)$ acting on the space of volume-normalized elements of $H^{1}(M)$.
Then $w$ is a weak solution of

\begin{equation}\label{ecuacion tau}
\begin{array}{rclr}
\tau^{\frac{m}{2(m+n-1)}} L^{m}_{\phi}w + \frac{m + n - 1}{m+n-2} \tau^{-\frac{1}{2}} w^{\frac{m+n}{m+n-2}} v^{-1}& = &0 &\quad \text{in} \quad M, \\
\tau^{\frac{m}{2(m+n-1)}} B^{m}_{\phi}w & = & c_{3} w^{\frac{m+n}{m+n-2}} &\quad \text{on} \quad \partial M, \\
\end{array}
\end{equation} where

\[
c_3 = (\nu(\tau) + 1) + \frac{\tau^{-\frac{1}{2}}}{m+n-2} \int_{M}w^{\frac{2(m+n-1)}{m+n-2}}v^{-1}.
\]


If additionally $(w, \tau)$ is a minimizer of the energy, then


\begin{equation}\label{ecuacion tau 2}
c_3 = \frac{(m + n -1) (2m + n- 2)}{(m+n-2) (2m + n -1)} (\nu + 1).
\end{equation}

\end{pro}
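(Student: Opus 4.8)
The plan is to compute the two Euler--Lagrange equations \eqref{ecuacion tau} by a standard first-variation argument, and then, in the case where $(w,\tau)$ minimizes the energy $\nu$, extract the extra scalar identity \eqref{ecuacion tau 2} by also varying $\tau$ and by testing the first equation against $w$ itself. First I would fix $\tau>0$ and consider a smooth variation $w_s$ of $w$ inside the constraint set $\{\,\xi\in H^1(M):\int_{\partial M}\xi^{\frac{2(m+n-1)}{m+n-2}}=1\,\}$, so that $\frac{d}{ds}\big|_{s=0}\int_{\partial M} w_s^{\frac{2(m+n-1)}{m+n-2}}=0$, i.e.\ $\int_{\partial M} w^{\frac{m+n}{m+n-2}}\dot w=0$. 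Differentiating $\mathcal{W}(w_s,\tau)$ at $s=0$, using that $(L^m_\phi,B^m_\phi)$ is self-adjoint (so $\frac{d}{ds}\big|_0\big[(L^m_\phi w_s,w_s)+(B^m_\phi w_s,w_s)\big]=2(L^m_\phi w,\dot w)_M+2(B^m_\phi w,\dot w)_{\partial M}$), and computing the interior and boundary terms, the vanishing of the derivative for all admissible $\dot w$ gives, via a Lagrange multiplier $c_3$ for the boundary constraint, exactly the system \eqref{ecuacion tau}. Matching the interior term pins down the coefficient $\frac{m+n-1}{m+n-2}\tau^{-1/2}$, and matching the boundary term identifies $c_3$; the stated value $c_3=(\nu(\tau)+1)+\frac{\tau^{-1/2}}{m+n-2}\int_M w^{\frac{2(m+n-1)}{m+n-2}}v^{-1}$ then follows by pairing the boundary equation with $w$, using the interior equation to rewrite $(L^m_\phi w,w)_M$, and recalling $\mathcal{W}(w,\tau)=\nu(\tau)$ together with the normalization $\int_{\partial M}w^{\frac{2(m+n-1)}{m+n-2}}=1$.

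For the final identity \eqref{ecuacion tau 2}, assume now that $(w,\tau)$ is an actual minimizer of $\nu=\inf_{\tau>0}\nu(\tau)$, so that in addition $\partial_\tau\mathcal{W}(w,\tau)=0$. Writing $\mathcal{W}(w,\tau)=\tau^{\frac{m}{2(m+n-1)}}E+\tau^{-1/2}I-1$ with $E:=(L^m_\phi w,w)_M+(B^m_\phi w,w)_{\partial M}$ and $I:=\int_M w^{\frac{2(m+n-1)}{m+n-2}}v^{-1}$ (the last boundary integral being $1$ by normalization), the stationarity in $\tau$ gives the relation $\frac{m}{2(m+n-1)}\tau^{\frac{m}{2(m+n-1)}}E=\frac12\tau^{-1/2}I$, hence $\tau^{-1/2}I=\frac{m}{m+n-1}\tau^{\frac{m}{2(m+n-1)}}E$. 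Substituting this into $\nu+1=\mathcal{W}(w,\tau)+1=\tau^{\frac{m}{2(m+n-1)}}E+\tau^{-1/2}I$ yields $\tau^{\frac{m}{2(m+n-1)}}E=\frac{m+n-1}{2m+n-1}(\nu+1)$ and $\tau^{-1/2}I=\frac{m}{2m+n-1}(\nu+1)$. Plugging the latter into the formula for $c_3$ obtained in the first part, $c_3=(\nu+1)+\frac{1}{m+n-2}\cdot\frac{m}{2m+n-1}(\nu+1)=\frac{(m+n-2)(2m+n-1)+m}{(m+n-2)(2m+n-1)}(\nu+1)$, and simplifying the numerator, $(m+n-2)(2m+n-1)+m=(2m+n-2)(m+n-1)$, gives precisely \eqref{ecuacion tau 2}.

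I do not expect a serious obstacle here; the argument is entirely a first-variation computation plus algebraic bookkeeping, of exactly the type already carried out in \cite{CaseYamabe}. The one place that needs a little care is the regularity/admissibility of the variations: one should check that the constrained critical point condition really produces the strong-form system \eqref{ecuacion tau} in the weak sense, i.e.\ that arbitrary $\dot w\in H^1(M)$ can be projected onto the tangent space of the constraint and that the boundary term $\int_{\partial M}w^{\frac{m+n}{m+n-2}}\dot w$ makes sense (which it does, since the exponent $\frac{m+n}{m+n-2}$ is subcritical for the trace embedding when $m>0$, and for $m=0$ one is in the classical Escobar setting). The other mild subtlety is purely notational: the constant $c_3$ in \eqref{ecuacion tau} carries a $\tau$-dependence through $\nu(\tau)$ and $I$, so one must be consistent about evaluating everything at the minimizing $\tau$ before asserting \eqref{ecuacion tau 2}. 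Beyond that, it is a direct verification.
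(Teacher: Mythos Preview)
Your proposal is correct and follows essentially the same approach as the paper: the paper's proof simply states that \eqref{ecuacion tau} ``follows immediately from the definition of $\mathcal{W}$'' and then, for \eqref{ecuacion tau 2}, differentiates $\mathcal{W}(w,\tau)$ in $\tau$ to obtain the relation $\frac{m}{m+n-1}\tau^{\frac{m}{2(m+n-1)}}E=\tau^{-1/2}I$ and substitutes. Your write-up is a more detailed version of exactly this argument, including the explicit Lagrange-multiplier computation and the algebraic verification $(m+n-2)(2m+n-1)+m=(2m+n-2)(m+n-1)$, which the paper leaves implicit.
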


\begin{proof}  The equality \eqref{ecuacion tau} follows immediately from the definition of $\mathcal{W}$. If $(w, \tau)$ is a critical point of the map $(w, \tau) \to \mathcal{W}(w, \tau)$, then

\begin{equation}
\frac{m}{m+n-1} \tau^{\frac{m}{2(m+n-1)}}((L^{m}_{\phi} w, w) + (B^{m}_{\phi} w, w)) =  \tau^{-\frac{1}{2}} \int_{M} w^{\frac{2(m+n-1)}{m+n-2}}v^{-1}.
\end{equation}

Using  this identity we can express $\nu$ and $c_3$ in terms of $(L^{m}_{\phi} w, w) + (B^{m}_{\phi} w, w)$ and these expressions yields  \eqref{ecuacion tau 2}. \end{proof}

\subsection{ \bf Euclidean half-space as the model space weighted Escobar problem}

Theorem \ref{teotrace} gives a complete classification of the minimizers  for the weighted Escobar quotient in the model space $(\R^n_{+}, dt^2 + dx^2 ,  dV, d \sigma, m)$ for $m$ non-negative integer.  In this sub-section we take a new $(\tau, x_0)$-parametric family of functions as in \eqref{funcion trace} with $\tau>0$ and $x_0 \in \R^{n-1}$.

To define the $(\tau, x_0)$-parametric family of functions fix $n \geq 3$ and $m > 0$. Given any $x_0 \in \R^{n-1}$ and $\tau > 0$, define the function $w_{x_{0}, \tau} \in C^{\infty}(\R^{n}_{+})$ by

\begin{equation}\label{funcion modelo}
w_{x_{0}, \tau} (t, x) = \tau^{-\frac{(n-1)(m+n-2)}{4(m+n-1)}} \left[ \left(1+ \left(\frac{c(m,n)}{\tau} \right)^{\frac{1}{2}}t\right)^2 + \frac{c(m,n) |x-x_0|^2}{\tau}  \right]^{-\frac{m+n-2}{2}}
\end{equation} where $c(m,n) = \frac{m+n-1}{m(m+n-2)^2}$. By change of variables we get

\begin{equation}\label{integral borde funcion modelo}
 V = \int_{\partial \R^{n}_{+}}  w_{x_{0}, \tau}^{\frac{2(m + n -1)}{m+n-2}} 1^{m} d \sigma = \int_{\partial \R^{n}_{+}}  w_{0, 1}^{\frac{2(m + n -1)}{m+n-2}} 1^{m} d\sigma.
\end{equation} A straightforward computation shows that

\begin{equation}\label{ecuacion funcion modelo}
\begin{array}{rclr}
-\tau^{\frac{m}{2(m+n-1)}} \Delta w_{x_{0}, \tau} + \frac{m + n - 1}{m+n-2} \tau^{-\frac{1}{2}} w_{x_{0}, \tau}^{\frac{m+n}{m+n-2}} & = &0 &\quad \text{in} \quad \R^{n}_{+}, \\
\tau^{\frac{m}{2(m+n-1)}} \dfrac{\partial w_{x_{0}, \tau}}{\partial \eta} & = & \left( \frac{m+n-1}{m}\right)^{\frac{1}{2}}w_{x_{0}, \tau}^{\frac{m+n}{m+n-2}} &\quad \text{on} \quad \partial \R^{n}_{+}, \\
\end{array}
\end{equation}

\begin{equation}\label{supremo funcion modelo}
\sup_{(x, t) \in \R^{n}_{+}} w_{x_0, \tau} (x, t) = w_{x_0, \tau}(x_0, 0) = \tau^{-\frac{(n-1)(m+n-2)}{4(m+n-1)}},
\end{equation} and for any $x \neq x_0$,

\begin{equation}\label{concentracion}
\lim_{\tau \to 0^{+}} w_{x_0, \tau} (x, t) = 0.
\end{equation}

Define $\tilde{w}_{x_0, \tau} = V ^{-\frac{m+n-2}{2(m+n-1)}}w_{x_0, \tau}$; with $V$ as in \eqref{integral borde funcion modelo}. Since $\tilde{w}_{x_0, \tau}$ achieves the weighted Escobar quotient, by Proposition \ref{W y Yamabe}, there exits $\tilde{\tau} > 0$ such that 

\small

\begin{equation}\label{nu Rn+ 2}
\begin{array}{l}
\nu(\R^{n}_+, dt^{2} + dx^{2},  dV, d \sigma, m)  + 1  = \mathcal{W}(\R^{n}_+, dt^{2} + dx^{2}, dV, d \sigma, m)(\tilde{w}_{x_0, \tau},  \tilde{\tau}) + 1 \\
 = \dfrac{\tilde{\tau}^{\frac{m}{2(m+n-1)}}} {V^{\frac{m+n-2}{m+n-1}} } \int_{\R^{n}_+} |\nabla w_{x_0, \tau}|^{2} dV + \tilde{\tau}^{-\frac{1}{2}} V^{-1} \int_{\R^{n}_+}  w_{x_0, \tau}^{\frac{2(m+n-1)}{m+n-2}}dV.
\end{array}
\end{equation}  

\normalsize

Then Proposition \ref{funcion realiza nu} yields $\tilde{\tau}= \tau V^{-\frac{2}{2m+n-1}}$.


\section{  The Escobar type problem for negative weighted Escobar constant}\label{Theorem A}

In this section, we prove Theorem A by a direct compactness argument. For this purpose, we develop some estimative for below for the Laplacian term in the Escobar quotient and some properties of Dirichlet eigenvalues and eigenfunctions.  In this section, $C$ is a real constant that depends only on the smooth metric measure space $(M^n,  g, v^{m} dV_g, v^{m} d \sigma_g)$ and possibly changing from line to line.

\subsection{\bf A below bound for conformal Laplacian term}

All functions in the family $\{w_{\epsilon, 0} \}$ as in \eqref{funcion trace} are minimizers of the weighted Escobar problem. Note that these functions are not uniformly bounded in $H^{1}(M)$ as $\epsilon \to 0$. That shows that in general there is no reason to find a minimizing function by direct arguments in the weighted Escobar quotient. It is possible that if the weighted Escobar quotient is finite and we try to minimize it with a sequence of functions normalized, then the terms involved in the numerator of the weighted Escobar quotient evaluated in these functions are not bounded uniformly. The next lemma deals with the  control of one of those terms from below.


\begin{lema}\label{estimativo por bajo}
Let $(M^n,  g, v^{m} dV_g, v^{m} d \sigma_g)$ be a compact smooth metric measure space with boundary and suppose that $\Lambda$ is finite, then there exists a real constant $C$ such that any volume-normalized function $\varphi \in H^{1}(M)$  satisfies

\begin{equation}\label{varphi}
(L^{m}_{\phi} \varphi, \varphi) + (B^{m}_{\phi} \varphi, \varphi) > C.
\end{equation}

\end{lema}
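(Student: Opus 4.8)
The plan is to argue by contradiction. Suppose no such constant $C$ exists; then there is a sequence $\varphi_j \in H^1(M)$, each volume-normalized on the boundary (so $\int_{\partial M}|\varphi_j|^{\frac{2(m+n-1)}{m+n-2}}\,v^m\,d\sigma_g = 1$), with
\[
(L^m_\phi \varphi_j,\varphi_j) + (B^m_\phi \varphi_j,\varphi_j) \longrightarrow -\infty.
\]
Writing the quadratic form out, $(L^m_\phi\varphi_j,\varphi_j)+(B^m_\phi\varphi_j,\varphi_j) = \int_M(|\nabla\varphi_j|^2 + \frac{m+n-2}{4(m+n-1)}R^m_\phi\varphi_j^2)\,v^m\,dV_g + \int_{\partial M}\frac{m+n-2}{2(m+n-1)}H^m_\phi\varphi_j^2\,v^m\,d\sigma_g$. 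Since $M$ is compact, $R^m_\phi$ and $H^m_\phi$ are bounded, so the zeroth-order terms are controlled by $\int_M\varphi_j^2\,v^m\,dV_g + \int_{\partial M}\varphi_j^2\,v^m\,d\sigma_g$ up to a universal constant. Hence the hypothesis forces at least one of $\int_M|\nabla\varphi_j|^2\,v^m\,dV_g$, $\int_M\varphi_j^2\,v^m\,dV_g$, $\int_{\partial M}\varphi_j^2\,v^m\,d\sigma_g$ to blow up; more precisely $\int_M|\nabla\varphi_j|^2 + \int_M\varphi_j^2 \to \infty$ (if both stayed bounded, the boundary term would be controlled by a trace inequality and the whole form would be bounded below). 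So $\|\varphi_j\|_{H^1(M,v^m dV_g)}\to\infty$.

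Now normalize: set $\psi_j = \varphi_j / \|\varphi_j\|_{H^1}$, so $\|\psi_j\|_{H^1}=1$ and, dividing the displayed inequality by $\|\varphi_j\|_{H^1}^2 \to \infty$,
\[
(L^m_\phi\psi_j,\psi_j) + (B^m_\phi\psi_j,\psi_j) \longrightarrow 0^{-}\quad\text{(or at least }\limsup \le 0).
\]
Meanwhile the denominator of $\mathcal{Q}$ evaluated at $\varphi_j$ is the fixed number $1$ times $(\int_M|\varphi_j|^{\frac{2(m+n-1)}{m+n-2}}v^{-1})^{-m/(m+n-1)}$, and since this exponent is negative and $\int_M|\varphi_j|^{\frac{2(m+n-1)}{m+n-2}}v^{-1}$ is bounded below away from zero by the boundary normalization together with compact Sobolev-type embeddings (or simply by using that $\varphi_j\not\equiv 0$ and passing to a subsequence), one checks that $\mathcal{Q}(\varphi_j)$ stays controlled. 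The cleaner route: use the Sobolev trace inequality on $(M,g,v^m dV_g, v^m d\sigma_g)$, namely that there is a constant $C_0$ with $(\int_{\partial M}|w|^{\frac{2(m+n-1)}{m+n-2}}v^m d\sigma_g)^{\frac{2m+n-2}{m+n-1}}(\int_M|w|^{\frac{2(m+n-1)}{m+n-2}}v^{-1})^{-m/(m+n-1)} \le C_0\|w\|_{H^1}^2$, obtained from Theorem \ref{teotrace} by a partition of unity / localization argument (this is the content also needed for Theorem B). Applying it to $\varphi_j$: the left side is $\ge c_0 (\int_M|\varphi_j|^{\frac{2(m+n-1)}{m+n-2}}v^{-1})^{-m/(m+n-1)}$ after using volume-normalization, but what I actually want is the reverse kind of estimate. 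Let me instead argue directly that $\Lambda \le \mathcal{Q}(\varphi_j)$ gives nothing, so I multiply through.

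The decisive step is this: from $\Lambda$ finite, $\mathcal{Q}(w)\ge\Lambda$ for all $w$, i.e.
\[
(L^m_\phi w,w)+(B^m_\phi w,w) \;\ge\; \Lambda\Big(\int_{\partial M}|w|^{\frac{2(m+n-1)}{m+n-2}}v^m d\sigma_g\Big)^{\frac{2m+n-2}{m+n-1}}\Big(\int_M|w|^{\frac{2(m+n-1)}{m+n-2}}v^{-1}\Big)^{-\frac{m}{m+n-1}}.
\]
For volume-normalized $\varphi_j$ the first bracket is $1$, so the right side is $\Lambda\,(\int_M|\varphi_j|^{\frac{2(m+n-1)}{m+n-2}}v^{-1})^{-m/(m+n-1)}$. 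If $\Lambda\ge 0$ this is $\ge 0 > -\infty$ provided the integral is finite, and we are done with $C=-1$, say; if $\Lambda<0$, then the right side is $\Lambda\cdot(\text{positive quantity})$, which is bounded below precisely when $\int_M|\varphi_j|^{\frac{2(m+n-1)}{m+n-2}}v^{-1}$ is bounded \emph{below} away from $0$. So the one genuine obstacle is: \textbf{show $\int_M|\varphi_j|^{\frac{2(m+n-1)}{m+n-2}}v^{-1}\,dV_g$ does not tend to zero along a volume-normalized sequence.} This follows from the Sobolev trace inequality on $M$ (localize Theorem \ref{teotrace}): it gives
\[
1 = \Big(\int_{\partial M}|\varphi_j|^{\frac{2(m+n-1)}{m+n-2}}v^m d\sigma_g\Big)^{\frac{2m+n-2}{m+n-1}}\!\le\; C_0\,\|\varphi_j\|_{H^1}^2\Big(\int_M|\varphi_j|^{\frac{2(m+n-1)}{m+n-2}}v^{-1}\Big)^{\frac{m}{m+n-1}},
\]
hence $\int_M|\varphi_j|^{\frac{2(m+n-1)}{m+n-2}}v^{-1} \ge (C_0\|\varphi_j\|_{H^1}^2)^{-(m+n-1)/m}$, and combining this lower bound with $\mathcal{Q}(\varphi_j)\ge\Lambda$ shows $(L^m_\phi\varphi_j,\varphi_j)+(B^m_\phi\varphi_j,\varphi_j) \ge \Lambda\,(C_0\|\varphi_j\|_{H^1}^2)^{(m+n-1)/m}$. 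If $\Lambda\ge 0$ this is $\ge 0$; if $\Lambda<0$ this can still go to $-\infty$ as $\|\varphi_j\|_{H^1}\to\infty$, so this crude bound is not enough — one must instead use that the zeroth-order terms in the form are \emph{small} relative to $\|\nabla\varphi_j\|^2$ when the latter dominates, and play $\|\nabla\varphi_j\|^2 - C\|\varphi_j\|^2$ against $\Lambda\cdot(\text{denominator})$ using Young's inequality so that the negative contribution is absorbed. I expect the technical heart of the proof to be this absorption argument: splitting into the cases "$\|\varphi_j\|_{H^1}$ bounded" (where compactness of the trace embedding $H^1(M)\hookrightarrow L^{\frac{2(m+n-1)}{m+n-2}}(\partial M)$ and weak lower semicontinuity of the form finish it) and "$\|\varphi_j\|_{H^1}\to\infty$" (where one normalizes, extracts a weak limit $\psi$, and uses the strict inequality $\Lambda > -\infty$ together with the trace inequality to derive that $(L^m_\phi\psi,\psi)+(B^m_\phi\psi,\psi)\le 0$ with $\int_{\partial M}|\psi|^{\frac{2(m+n-1)}{m+n-2}}v^m d\sigma_g > 0$, contradicting $\mathcal{Q}(\psi)\ge\Lambda$ — or, if $\psi$ has zero boundary trace, reaching a contradiction with $\|\psi\|_{H^1}$ being a limit of unit vectors via Rellich). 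The main obstacle is thus handling the non-compact, boundary-concentrating case cleanly, for which the localized form of Theorem \ref{teotrace} is the essential tool.
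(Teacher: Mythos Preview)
Your proposal is headed in the wrong direction and misses the simple argument the paper uses. You correctly set up the contradiction and correctly write, for volume-normalized $\varphi_j$,
\[
(L^m_\phi\varphi_j,\varphi_j)+(B^m_\phi\varphi_j,\varphi_j)\;\ge\;\Lambda\left(\int_M|\varphi_j|^{\frac{2(m+n-1)}{m+n-2}}v^{-1}\right)^{-\frac{m}{m+n-1}},
\]
but then you declare the ``genuine obstacle'' to be proving that $\int_M|\varphi_j|^{\frac{2(m+n-1)}{m+n-2}}v^{-1}$ stays bounded away from zero. This is backwards. The paper's observation is that the finiteness of $\Lambda$ together with $(L^m_\phi\varphi_j,\varphi_j)+(B^m_\phi\varphi_j,\varphi_j)\to-\infty$ \emph{forces} $\int_M|\varphi_j|^{\frac{2(m+n-1)}{m+n-2}}v^{-1}\to 0$: indeed, $\mathcal{Q}(\varphi_j)=\big[(L^m_\phi\varphi_j,\varphi_j)+(B^m_\phi\varphi_j,\varphi_j)\big]\big(\int_M|\varphi_j|^{\frac{2(m+n-1)}{m+n-2}}v^{-1}\big)^{m/(m+n-1)}\ge\Lambda$, and a product bounded below with one factor going to $-\infty$ requires the other (positive) factor to tend to $0$.

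Once you have $\int_M|\varphi_j|^{\frac{2(m+n-1)}{m+n-2}}v^{-1}\to 0$, H\"older gives $\int_M\varphi_j^2\,v^m dV_g$ uniformly bounded; the boundary normalization plus H\"older gives $\int_{\partial M}\varphi_j^2\,v^m d\sigma_g$ uniformly bounded. But then, since $R^m_\phi$ and $H^m_\phi$ are bounded on the compact $M$,
\[
(L^m_\phi\varphi_j,\varphi_j)+(B^m_\phi\varphi_j,\varphi_j)\;\ge\;\int_M|\nabla\varphi_j|^2\,v^m dV_g \;-\; C\big(\|\varphi_j\|_{2,M}^2+\|\varphi_j\|_{2,\partial M}^2\big)\;\ge\;-C,
\]
which contradicts $(L^m_\phi\varphi_j,\varphi_j)+(B^m_\phi\varphi_j,\varphi_j)\to-\infty$. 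That is the whole proof. Your apparatus of localized trace inequalities, $H^1$-renormalization, weak limits, Rellich, and Young-type absorption is unnecessary here, and the specific route you sketch (trying to bound $\int_M|\varphi_j|^{\frac{2(m+n-1)}{m+n-2}}v^{-1}$ from below via a trace inequality involving $\|\varphi_j\|_{H^1}$, which itself is blowing up) does not close.
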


\begin{proof}  Suppose that  there exists a sequence of functions $\{ \varphi_i\}_{i = 1}^{\infty}$ such that

\begin{equation}\label{varphi_i}
\lim_{i \to \infty}(L^{m}_{\phi} \varphi_i, \varphi_i) + (B^{m}_{\phi} \varphi_i, \varphi_i) = - \infty \quad \text{and} \quad  \int_{\partial M}  \varphi_{i}^{\frac{2(m + n -1)}{m+n-2}} = 1.
\end{equation}

Since $\Lambda$ is finite there exists a real constant $C$ such that every volume-normalized $\varphi$  satisfies

\[
C \leq \Lambda(\varphi) = \left((L^{m}_{\phi} \varphi, \varphi) + (B^{m}_{\phi} \varphi, \varphi) \right) \left(\int_{ M}  \varphi^{\frac{2(m + n -1)}{m+n-2}} \right)^{\frac{m}{m+n-1}}.
\]

From the last inequality it follows that $\lim \limits_{i \to \infty} \int_{ M}  \varphi_{i}^{\frac{2(m + n -1)}{m+n-2}} = 0$ and by the H\"{o}lder inequality it follows that $\int_{M} \varphi_i^{2} < C$ for any $i$. Similarly, using that $\varphi_i$  are volume normalized and the H\"{o}lder inequality we  get $\int_{\partial M} \varphi_i^{2} < C$. Using these  $L^{2}$ estimate we obtain that

\[
(L^{m}_{\phi} \varphi_i, \varphi_i) + (B^{m}_{\phi} \varphi_i, \varphi_i) > C
\] contradicting the assumption \eqref{varphi_i}. \end{proof}

\subsection{\bf Dirichlet eigenvalues for the Conformal Laplacian}

In order to state the following lemma,  we say that a real number $\rho$ is an \textit{eigenvalue type Dirichlet} on $H^{1}_0(M) = \{ \varphi |\, \varphi \in H^{1}(M), \, \varphi \equiv 0 \, \, \text{on} \, \, \partial M\}$ if $\rho$ satisfies for some 
$\varphi \in H^{1}_0(M)$

\begin{equation}\label{ecuacion rho}
L^{m}_{\phi} \varphi = \rho \varphi \quad \text{in} \quad M,  \\
\quad \quad \varphi \equiv 0 \quad \text{on} \quad \partial M. 
\end{equation}

We also call $\varphi$ an \textit{eigenfunction} if it satisfies \eqref{ecuacion rho}. Let us denote by $\rho_1$  the first eigenvalue type Dirichlet on $H^{1,2}_0(M)$, then $\rho_1$ admits a variational characterization as

\begin{equation}\label{caracterizacion variacional rho}
\rho_1 = \inf \limits_{\varphi \in H^{1}_{0}(M)} \dfrac{\int_M |\nabla \varphi|^2 + \frac{m+n-2}{4(m+n-1)} R^{m}_{\phi} \varphi^2}{\int_{M} \varphi^{2}}.
\end{equation}

We have  $\rho_{1}$ is finite and we can choose an eigenfunction $\varphi$ associated to this eigenvalue such that $\varphi \geq 0$. Moreover, using the maximum principle we can take $\varphi > 0$ in $M \setminus \partial M$.

\begin{lema}\label{rho}
Let $(M^n,  g, v^{m} dV_g, v^{m} d \sigma_g)$ be a compact smooth metric measure space with boundary and $m > 0$. Then $\Lambda= -\infty$ if and only if $\rho_1 \leq 0$. 
\end{lema}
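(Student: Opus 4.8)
The plan is to prove the two implications separately, using the below bound from Lemma \ref{estimativo por bajo} for one direction and a direct test-function argument for the other.

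\textbf{Suppose first that $\rho_1 > 0$; we show $\Lambda > -\infty$.} From the variational characterization \eqref{caracterizacion variacional rho}, positivity of $\rho_1$ means that for every $\varphi \in H^1(M)$,
\[
\int_M |\nabla \varphi|^2 + \frac{m+n-2}{4(m+n-1)} R^{m}_{\phi}\varphi^2 \; \geq \; \rho_1 \int_M \varphi^2 \; \geq \; 0
\]
whenever $\varphi$ vanishes on $\partial M$; but I need to control $(L^m_\phi \varphi, \varphi) + (B^m_\phi \varphi, \varphi)$ for arbitrary $\varphi$, including the boundary term. The idea is to split a general $\varphi$ using a harmonic-type extension or a cutoff near $\partial M$, or — more simply — to argue by contradiction: if $\Lambda = -\infty$, take a volume-normalized minimizing sequence $\varphi_i$ with $(L^m_\phi \varphi_i,\varphi_i)+(B^m_\phi \varphi_i,\varphi_i) \to -\infty$. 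Since the boundary term $\int_{\partial M} H^m_\phi \varphi_i^2\, v^m d\sigma_g$ is, after the volume-normalization and Hölder, bounded in terms of $\int_{\partial M}\varphi_i^2$ which itself is controlled (as in the proof of Lemma \ref{estimativo por bajo}, via Hölder against the normalization $\int_{\partial M}\varphi_i^{2(m+n-1)/(m+n-2)} = 1$), the divergence must come from $\int_M |\nabla \varphi_i|^2 + \frac{m+n-2}{4(m+n-1)}R^m_\phi \varphi_i^2 \to -\infty$. But then, using a trace/extension argument to replace $\varphi_i$ by a function $\psi_i \in H^1_0(M)$ with comparable interior energy and bounded $L^2$ norm (subtracting a bounded harmonic extension of the boundary data), one contradicts \eqref{caracterizacion variacional rho} with $\rho_1 > 0$. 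Actually the cleanest route is: $\Lambda$ finite is equivalent, by Lemma \ref{estimativo por bajo}, to the existence of a uniform lower bound $C$ on $(L^m_\phi\varphi,\varphi)+(B^m_\phi\varphi,\varphi)$ over volume-normalized $\varphi$, and one checks that such a bound forces $\rho_1 > 0$ by testing with (rescalings of) the Dirichlet eigenfunction.

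\textbf{Conversely, suppose $\rho_1 \le 0$; we show $\Lambda = -\infty$.} Let $\varphi > 0$ in $M \setminus \partial M$ be the first Dirichlet eigenfunction, so $\varphi \equiv 0$ on $\partial M$ and $\int_M |\nabla\varphi|^2 + \frac{m+n-2}{4(m+n-1)}R^m_\phi \varphi^2 = \rho_1 \int_M \varphi^2 \le 0$. Since $\varphi$ vanishes on the boundary, $\mathcal{Q}(\varphi)$ is not directly defined (the denominator vanishes), so the plan is to perturb: fix a smooth $\psi > 0$ on $M$ with $\int_{\partial M}\psi^{2(m+n-1)/(m+n-2)} v^m d\sigma_g = 1$ and consider $w_s = \varphi + s\psi$ for small $s > 0$. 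Then the numerator $(L^m_\phi w_s, w_s)+(B^m_\phi w_s, w_s)$ equals $\rho_1\int_M\varphi^2 + O(s)$, which is $\le O(s)$; meanwhile $\int_{\partial M} w_s^{2(m+n-1)/(m+n-2)} = s^{2(m+n-1)/(m+n-2)}(1 + o(1))$ and $\int_M w_s^{2(m+n-1)/(m+n-2)} v^{-1}$ stays bounded below by a positive constant (since $\varphi > 0$ in the interior). Therefore
\[
\mathcal{Q}(w_s) \le \frac{O(s) \cdot C}{\; s^{\frac{2(2m+n-1)}{m+n-1}}(1+o(1))\;} \; \longrightarrow \; -\infty \quad \text{as } s \to 0^+
\]
provided $\rho_1 < 0$; in the borderline case $\rho_1 = 0$ one needs the numerator to be genuinely negative for some competitor, which is arranged by adding to $\varphi$ a well-chosen interior perturbation $\chi$ (supported away from $\partial M$) with $(L^m_\phi(\varphi+\chi), \varphi+\chi) < 0$ — such $\chi$ exists because $\rho_1 = 0$ is attained, so second-order terms can be made to dominate, or alternatively because if no such $\chi$ existed then $L^m_\phi$ restricted to $H^1_0$ would be positive semidefinite with nontrivial kernel and one argues directly. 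Once the numerator is strictly negative for some volume-normalized competitor, rescaling that competitor's boundary mass to zero (via $w_s$) drives $\mathcal{Q} \to -\infty$.

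\textbf{The main obstacle} I anticipate is the boundary case $\rho_1 = 0$ and, more generally, correctly handling the boundary term $(B^m_\phi \varphi,\varphi)$ in the forward direction: the variational characterization \eqref{caracterizacion variacional rho} only sees interior data on $H^1_0$, so I must argue that the boundary integral $\int_{\partial M} H^m_\phi \varphi^2 v^m d\sigma_g$ cannot sabotage finiteness of $\Lambda$. This is exactly where the Hölder/trace estimates from the proof of Lemma \ref{estimativo por bajo} are needed — controlling $\|\varphi\|_{L^2(\partial M)}$ and $\|\varphi\|_{L^2(M)}$ by the volume normalization — combined with the observation that the ``bad'' part of a minimizing sequence concentrates in the interior, where $\rho_1 > 0$ provides coercivity. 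I would carry out the argument in the order: (1) recall \eqref{caracterizacion variacional rho} and the sign/regularity of the first Dirichlet eigenfunction; (2) prove $\rho_1 \le 0 \Rightarrow \Lambda = -\infty$ by the perturbed-eigenfunction test above; (3) prove $\Lambda = -\infty \Rightarrow \rho_1 \le 0$ (contrapositive: $\rho_1 > 0 \Rightarrow \Lambda$ finite) by contradiction with Lemma \ref{estimativo por bajo}, decomposing a minimizing sequence into boundary-controlled and interior parts.
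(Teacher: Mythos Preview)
Your plan has a genuine gap in the borderline case $\rho_1 = 0$. When $\rho_1 = 0$ the quadratic form $(L^m_\phi\,\cdot\,,\,\cdot\,)$ is positive \emph{semidefinite} on $H^1_0(M)$, so no interior perturbation $\chi\in H^1_0(M)$ can make $(L^m_\phi(\varphi+\chi),\varphi+\chi)$ strictly negative --- your proposed fix is impossible, and the fallback ``one argues directly'' is left empty. The paper's remedy is to pair $\varphi$ with a function that does \emph{not} vanish on $\partial M$: with $\psi_t = (t\varphi+1)/\sqrt{D}$ the energy expands as $\frac{1}{D}\bigl[t^2\rho_1\int_M\varphi^2 + t\,\frac{m+n-2}{2(m+n-1)}\int_M R^m_\phi\varphi + E\bigr]$, and when $\rho_1=0$ the eigenfunction equation together with Hopf's lemma gives $\frac{m+n-2}{4(m+n-1)}\int_M R^m_\phi\varphi = \int_M\Delta_\phi\varphi = \int_{\partial M}\partial\varphi/\partial\eta < 0$, so the energy is linear in $t$ with negative slope. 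Since $\psi_t\equiv 1/\sqrt{D}$ on $\partial M$, the boundary integral is fixed and the interior integral stays bounded below, whence $\mathcal Q(\psi_t)\to-\infty$ as $t\to\infty$. Hopf's lemma is the missing idea; it would also repair your $w_s=\varphi+s\psi$ scheme (the cross term $2s\int_{\partial M}(\partial_\eta\varphi)\psi$ is the needed negative first-order contribution), but your ``second-order'' reasoning is not what does the work.

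Your forward direction is also underspecified. From $\mathcal Q(\varphi_i)\to-\infty$ you cannot conclude $(L^m_\phi\varphi_i,\varphi_i)+(B^m_\phi\varphi_i,\varphi_i)\to-\infty$, since the quotient carries the factor $\bigl(\int_M\varphi_i^{2(m+n-1)/(m+n-2)}v^{-1}\bigr)^{m/(m+n-1)}$ which may diverge while the energy stays bounded; and Lemma~\ref{estimativo por bajo} is one-directional (it \emph{assumes} $\Lambda$ finite), so invoking it as an equivalence is circular. The paper argues directly: take volume-normalized $\varphi_i$ with $\mathcal Q(\varphi_i)\to-\infty$ and $(L^m_\phi\varphi_i,\varphi_i)+(B^m_\phi\varphi_i,\varphi_i)\le 0$, show first $\int_M\varphi_i^{2(m+n-1)/(m+n-2)}\to\infty$ and then (via Sobolev) $\|\varphi_i\|_{L^2(M)}\to\infty$, renormalize $\psi_i=\varphi_i/\|\varphi_i\|_{L^2(M)}$, and extract a weak limit $\psi\in H^1_0(M)$ with $\|\psi\|_{L^2}=1$ and nonpositive energy, whence $\rho_1\le 0$ by \eqref{caracterizacion variacional rho}.
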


\begin{proof} First, let us assume $\rho_1 \leq 0$. Let $\varphi$ be a first eigenfunction of the problem \eqref{ecuacion rho} such that $\varphi > 0$ in $M \setminus \partial M$. Let us define 

\[
\psi_t = \dfrac{t \varphi + 1}{\sqrt{D}} \quad \text{where} \quad D = \left(\int_{\partial M} e^{-\phi} d\sigma_g \right)^{\frac{m+n-2}{(m+n-1)}}
\] and observe that for some constant $C > 0$ we have

\begin{equation}\label{integral interior y frontera}
\int_{\partial M} \psi_{t}^{\frac{2(m+n-1)}{m+n-2}} = 1 \quad \quad  \text{and}  \quad \quad \int_{ M} \psi_{t}^{\frac{2(m+n-1)}{m+n-2}} \geq C > 0.
\end{equation}


\begin{claim} \begin{equation}\label{laplaciano yendo infinito}
(L^{m}_{\phi} \psi_t, \psi_t) + (B^{m}_{\phi} \psi_t, \psi_t) \to -\infty \quad \text{when} \quad t \to \infty.
\end{equation}

\end{claim}

To prove this claim, we argue as Garcia and Mu\~{n}oz in \cite[Proposition 1]{GJ}. First, we consider the case $\rho_1 < 0$, using that $\psi_t \equiv 0$ on $\partial M$ we get

\small

\begin{equation}\label{laplaciano psi}
(L^{m}_{\phi} \psi_t, \psi_t) + (B^{m}_{\phi} \psi_t, \psi_t) = \frac{1}{D} \left[t^{2} \left(\rho_1 \int_{M} \varphi^2 \right) + t \left(\frac{m+n-2}{2(m+n-1)}  \int_{M}   \varphi R^{m}_{\phi} \right) + E \right]
\end{equation} \normalsize where

$$ E = \frac{m+n-2}{4(m+n-1)}  \int_{M} R^{m}_{\phi} + \frac{m+n-2}{2(m+n-1)}  \int_{M}   H^{m}_{\phi}.$$ Since  $\rho_1 < 0$, the quadratic term for $t$ on the right hand side of \eqref{laplaciano psi} is negative. Letting $t \to \infty$ it follows our claim in this case.

Now, we suppose that $\rho_1 = 0$, then

\begin{equation}\label{laplaciano psi2}
(L^{m}_{\phi} \psi_t, \psi_t) + (B^{m}_{\phi} \psi_t, \psi_t) = \frac{1}{D} \left[ t \left(\frac{m+n-2}{2(m+n-1)}  \int_{M}   \varphi R^{m}_{\phi} \right) + E \right]
\end{equation} \normalsize where $E$ is defined as in the previous case. Since $\varphi \equiv 0$ on $\partial M$, by Hopf's Lemma,  $\dfrac{\partial \varphi}{\partial \eta} < 0$. Then, integrating by parts yields

\[
\frac{m+n-2}{4(m+n-1)}  \int_{M}   \varphi R^{m}_{\phi} = \int_{M} \Delta_{\phi} \varphi =  \int_{\partial M} \dfrac{\partial \varphi}{\partial \eta} < 0.
\]

Then, the linear term for $t$ on the right hand side of \eqref{laplaciano psi2} is negative. Taking $t \to \infty$ we get the conclusion in this case and we finish the claim's proof.

Finally, from the estimates \eqref{integral interior y frontera} and \eqref{laplaciano yendo infinito} we get that $\mathcal{Q}(\psi_t) \to -\infty$ as $t \to \infty$, therefore we conclude $\Lambda = -\infty$.
 
Next, we assume that $\Lambda = -\infty$ and we prove that $\rho_1 \leq 0$. This assumption implies that $R^{m}_{\phi} $ is not identically zero. Let us take a minimizing sequence of functions $\{ \varphi_i\}_{i = 1}^{\infty}$ of $\Lambda$ such that 

\[
\int_{\partial M} \varphi_{i}^{\frac{2(m+n-1)}{m+n-2}} = 1, \quad (L^{m}_{\phi} \varphi_{i}, \varphi_{i}) + (B^{m}_{\phi} \varphi_{i}, \varphi_{i})  \leq 0 \quad \text{and}  \quad \lim \limits_{i \to \infty} \mathcal{Q}(\varphi_i) = - \infty.
\] 

\begin{claim} $\displaystyle \int_{ M} \varphi_{i}^{\frac{2(m+n-1)}{m+n-2}} \to  \infty$ when $i \to \infty$. \end{claim}

Arguing by contradiction, we assume that there exists a constant $C>0$ such that $\int_{ M} \varphi_{i}^{\frac{2(m+n-1)}{m+n-2}} < C$, then by the H\"{o}lder inequality we get that $\int_{ M} \varphi_{i}^{2} < C$ for every $i$. On the other hand,  we have that $(L^{m}_{\phi} \varphi_{i}, \varphi_{i}) + (B^{m}_{\phi} \varphi_{i}, \varphi_{i}) \to -\infty$ when $i \to \infty$ since $\lim \limits_{i \to \infty} \mathcal{Q}(\varphi_i) = - \infty$. Using this limit, the fact that $R^{m}_{\phi} $ is a non-zero function and that $\varphi_{i}$ is normalized we get $\int_{ M} \varphi_{i}^{2} \to  \infty$ when $i \to \infty$, which is a contradiction with the initial assumption. Hence  $\int_{ M} \varphi_{i}^{\frac{2(m+n-1)}{m+n-2}} \to  \infty$.

\begin{claim} $\displaystyle \int_{ M} \varphi_{i}^{2} \to  \infty$ when $i \to \infty$. \end{claim}

Arguing by contradiction, suppose that there exists a constant $C>0$ such that $\int_{ M} \varphi_{i}^{2} < C$. Then

\begin{equation}\label{desigualdad norma rho}
\int_M |\nabla \varphi_i|^2 \leq (L^{m}_{\phi} \varphi_i, \varphi_i) + (B^{m}_{\phi} \varphi_i, \varphi_i) + C(||\varphi_i||^2_{2, M} + ||\varphi_i||^2_{2, \partial M}) < C.
\end{equation}

On the other hand, by the Sobolev inequality we get that there exists a constant $C$ such that 

\begin{equation}\label{desigualdad norma rho M}
\int_{ M} \varphi_{i}^{\frac{2(m+n-1)}{m+n-2}} \leq C \left(\int_M |\nabla \varphi_i|^2 + \int_{ M} \varphi_{i}^{2} \right).
\end{equation}

Then inequalities \eqref{desigualdad norma rho} and \eqref{desigualdad norma rho M} yield $\int_{ M} \varphi_{i}^{\frac{2(m+n-1)}{m+n-2}} \leq C $. This is a contradiction with the Claim $2$ and we conclude that $\int_{ M} \varphi_{i}^{2} \to  \infty$ when $i \to \infty$.

Now we are able to conclude the proof of the lemma. For this purpose let us define the functions $\psi_i = \dfrac{\varphi_i}{||\varphi_i||_{2, M}}$. Arguing as in the last part of Proposition $1$ in Garcia and Mu\~{n}oz \cite{GJ}, we get that a sub-sequence $\psi_i$  converges weakly to a function $\psi$ in $H^{1}_{0}(M)$ such that $||\psi||_{2, M} = 1$ and

\[
\rho_1 \leq \int_M |\nabla \psi|^2 + \frac{m+n-2}{4(m+n-1)} R^{m}_{\phi} \psi^2 \leq \liminf \limits_{i \to \infty} (L^{m}_{\phi} \psi_i, \psi_i) + (B^{m}_{\phi} \psi_i, \psi_i) \leq 0. 
\]

\end{proof}

\subsection{ \bf Proof of Theorem A}
In this subsection we prove Theorem A using the before Lemmas presented in this section.

\textbf{\textit{Proof of Theorem A}}. Let  $\{ w_i\}_{i=1}^\infty$ be a sequence of positive functions such that $\int_{\partial M}  w_{i}^{\frac{2(m + n -1)}{m+n-2}} = 1$, $\mathcal{Q}(w_i) \leq 0$ and $\mathcal{Q}(w_i) \to  \Lambda$ when $i \to \infty$. Then

\begin{equation}\label{desigualdad norma 2}
0 \geq (L^{m}_{\phi} w_i, w_i) + (B^{m}_{\phi} w_i, w_i) \geq ||\nabla w_i||^2_{2, M} - C(||w_i||^2_{2, M} + ||w_i||^2_{2, \partial M}).
\end{equation}

First, we consider the case $||w_i^{2}||_{2, M} < C$, then the last inequality yields that $\{w_i\}_{i=1}^{\infty}$ are uniformly bounded in $H^{1}(M)$. Recall that $m> 0$, then $1< \frac{2(m+n-1)}{m+n-2} < \frac{2(n-1)}{n-2}$, i.e. $\frac{2(m+n-1)}{m+n-2}$ is less than the critical Trace's inequality exponent. 
By Sobolev's and Trace's embedding Theorems, there exists a function $w$ and a sub-sequence $\{w_i\}_{i = 1}^{\infty}$ which converges to $w$ in $L^{2}(M)$, $L^{\frac{2(m+n-1)}{m+n-2}}(M)$ and $L^{\frac{2(m+n-1)}{m+n-2}} (\partial M)$ and also $\{w_i\}_{i = 1}^{\infty}$ converges weakly to $w$ in $H^{1}(M)$. It follows that there exist a constant $C$ such that

\[
\int_{M}w^{\frac{2(m+n-1)}{m+n-2}}v^{-1} \geq C  \quad \text{and} \quad ||w||_{\frac{2(m+n-1)}{m+n-2}, \partial M} = 1. 
\]

Then by construction, $w$ minimizes the weighted Escobar quotient and by  Proposition \ref{ecuacion minimizador Yamabe}, $w$ is a non-negative weak solution of

\begin{equation}
\begin{array}{ll}
L^{m}_{\phi}w + c_{1}w^{\frac{m+n}{m+n-2}} v^{-1} = 0 &\quad \text{in} \quad M, \\
B^{m}_{\phi}w = c_{2}w^{\frac{m+n}{m+n-2}} &\quad \text{on} \quad \partial M. \\
\end{array}
\end{equation}

Since $1 < \frac{m+n-1}{m+n-2} < \frac{n-1}{n-2}$, the usual elliptic regularity argument for sub-critical equations allows us to conclude that $w$ is in fact smooth and positive, as we desired.

Following, we prove that we do not have the case when $||w_i||_{2, M} \to \infty$ is unbounded. Arguing by contradiction, we assume that $||w_i||_{2, M} \to \infty$ when $i \to \infty$. Consider the $L^2$ re-normalized sequence of functions $\tilde{w}_i = \frac{w_i}{||w_i||_{2, M}}$. It follows that $||\tilde{w}_i||_{\frac{2(m+n-1)}{m+n-2}, \partial M} \to 0 $ when $i \to \infty$. Since $\tilde{w}_i $ satisfy the inequality \eqref{desigualdad norma 2} for every $i$ we know that $\{\tilde{w}_i\}_{i = 1}^{\infty}$ is uniformly bounded in $H^{1,2}(M)$. 

By  Sobolev's and  Trace's embedding Theorems, there exists a function $w$ and a sub-sequence $\{\tilde{w}_i\}_{i = 1}^{\infty}$ which converges to $w$ in $L^{2}(M)$, $L^{\frac{2(m+n-1)}{m+n-2}}(M)$ and $L^{\frac{2(m+n-1)}{m+n-2}} (\partial M)$ and also weakly in $H^{1}(M)$. In consequence, $||w||_{2, M} = 1$ and using again that $||\tilde{w}_i||_{\frac{2(m+n-1)}{m+n-2}, \partial M} \to 0 $ when $i \to \infty$, we get that $w \equiv 0$ in $\partial M$. 

On the other hand, Lemma \ref{estimativo por bajo} yields  

\[
0 > (L^{m}_{\phi} w_i, w_i) + (B^{m}_{\phi} w_i, w_i) > -C.
\] 

Therefore $(L^{m}_{\phi} \tilde{w}_i, \tilde{w}_i) + (B^{m}_{\phi} \tilde{w}_i, \tilde{w}_i) \to 0$ when $i \to \infty$. Using $w$ as a test function in \eqref{caracterizacion variacional rho}, we conclude that 

\[
\rho_1 \leq \int_M |\nabla w|^2 + \frac{m+n-2}{4(m+n-1)} R^{m}_{\phi} w \leq \liminf \limits_{i \to \infty} (L^{m}_{\phi} \tilde{w}_i, \tilde{w}_i) + (B^{m}_{\phi} \tilde{w}_i, \tilde{w}_i) = 0.
\]

But $\rho_1 \leq 0$ contradicts Lemma \ref{rho} because  $\Lambda $ is finite by hypothesis.  $\bs$


\section{Aubin type inequality for weighted Escobar constants}\label{Theorem B}

In this section, we find an upper bound for the $\tau$-energy as $\tau$ goes to zero, Theorem B is a consequence of this estimate. To prove this estimate, we use Theorem \ref{teotrace} and the family $\{w_{0, \tau}\} $ in \eqref{funcion modelo} as test functions in the $\mathcal{W}$-functional. Actually, Theorem \ref{teotrace} is the reason for which the weighted Escobar constant for the Euclidean half-space appears on the right hand side of the inequality \eqref{desigualdad Jhova Aubin}. Similar ideas to prove Theorem B appeared in \cite{JhovannyAubin}. As in the previous section, $C$ is a real constant that depends only on the smooth metric measure space $(M^n,  g, v^{m} dV_g, v^{m} d \sigma_g)$ and possibly changing from line to line or in the same line.

\begin{lema}\label{limsup nu}
Let $(M^n,  g, v^{m} dV_g, v^{m} d \sigma_g)$ be a compact smooth metric measure space with  boundary and $m \geq 0$, then

\[
\limsup \limits_{\tau \to 0} \nu(\tau) \leq \nu[\R^{n}_{+},  dt^2 + dx^2, dV,  d \sigma, m].
\]
\end{lema}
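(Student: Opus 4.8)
The plan is to produce, for each small $\tau>0$, an explicit admissible test function for $\nu(\tau)$ whose $\mathcal{W}$-energy converges, as $\tau\to 0$, to a value arbitrarily close to $\nu[\R^n_+, dt^2+dx^2, dV, d\sigma, m]$. The natural candidates are bump functions built from the model family $\{w_{0,\tau}\}$ defined in \eqref{funcion modelo}, transplanted to $M$ via a boundary normal coordinate chart. First I would fix a boundary point $p\in\partial M$ and geodesic normal coordinates $(x,t)$ for a half-ball $B_\delta^+\subset M$ centered at $p$, in which $g=dt^2+dx^2+O(|(x,t)|)$ and $\phi(p)=0$ after a conformal change (using Proposition \ref{weighted Yamabe quotient conformal} and the conformal invariance of $\nu$, one may normalize so that $\phi$ vanishes to high order at $p$). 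Let $\chi$ be a smooth cutoff supported in $B_\delta^+$, equal to $1$ on $B_{\delta/2}^+$, and set $\psi_\tau = \chi\, w_{0,\tau}$, rescaled by a constant so that $\int_{\partial M}\psi_\tau^{2(m+n-1)/(m+n-2)}\, v^m\, d\sigma_g = 1$; call the normalized function $\hat\psi_\tau$. Because $w_{0,\tau}$ concentrates at $p$ as $\tau\to 0$ (see \eqref{supremo funcion modelo}, \eqref{concentracion}), the contributions of the cutoff region $\{\chi\neq 0,1\}$ to all the integrals in $\mathcal{W}(\hat\psi_\tau,\cdot)$ are lower-order, and the metric and weight errors $g - g_{\mathrm{eucl}}$, $v^m - 1$ are controlled by powers of $\tau$.

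Next I would estimate the three terms of $\mathcal{W}(\hat\psi_\tau,\tau')$ for a suitable $\tau'=\tau'(\tau)$. For the gradient/curvature term $(L^m_\phi\hat\psi_\tau,\hat\psi_\tau)+(B^m_\phi\hat\psi_\tau,\hat\psi_\tau)$, the leading part is $\int_{B_\delta^+}|\nabla w_{0,\tau}|^2$ computed in the Euclidean metric, with the zeroth-order terms ($R^m_\phi w^2$, $H^m_\phi w^2$) and the metric-distortion terms being $o(1)$ relative to this as $\tau\to 0$ — this is exactly the standard "bubble has negligible lower-order energy" phenomenon, valid because $2(m+n-1)/(m+n-2)$ is subcritical for the interior Sobolev exponent and the bubble's $L^2$-mass stays bounded. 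Similarly $\int_M \hat\psi_\tau^{2(m+n-1)/(m+n-2)} v^{-1}$ and $\int_{\partial M}\hat\psi_\tau^{2(m+n-1)/(m+n-2)}$ converge to the corresponding Euclidean half-space integrals of the normalized bubble. Taking $\tau'$ to be the minimizer in $\tau'$ of $\mathcal{W}(\hat\psi_\tau,\tau')$ — which by Lemma \ref{minimizacion h} has a closed form in terms of these integrals, compare \eqref{nu Rn+ 2} and the discussion following \eqref{supremo funcion modelo} — one gets $\mathcal{W}(\hat\psi_\tau,\tau') = \mathcal{W}(\R^n_+,\ldots)(\tilde w_{0,\tau}, \tilde\tau) + o(1)$ as $\tau\to 0$. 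Since $\tilde w_{0,\tau}$ achieves the weighted Escobar quotient on $\R^n_+$ (Theorem \ref{teotrace}) and hence, via Proposition \ref{W y Yamabe}, realizes $\nu[\R^n_+,\ldots]$ at the corresponding $\tilde\tau$, it follows that $\nu(\tau)\le \mathcal{W}(\hat\psi_\tau,\tau')\le \nu[\R^n_+, dt^2+dx^2, dV, d\sigma, m] + o(1)$, and letting $\tau\to 0$ gives the claimed $\limsup$ bound.

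The main obstacle is the careful bookkeeping of the error terms: one must show that all the "non-Euclidean" contributions — the metric distortion in $|\nabla\cdot|_g^2$ and in the volume/boundary measures, the weight factors $e^{-\phi}$, $e^{-(m-1)\phi/m}$, and the commutator terms produced by the cutoff $\chi$ — vanish in the limit after the bubble is renormalized to unit boundary volume. The delicate point is the relative rate: the bubble energy $\int|\nabla w_{0,\tau}|^2$ is $O(1)$ (it is scale-invariant in $n$-dimensions only for the critical exponent, but here the model function \eqref{funcion modelo} is precisely tuned so this term is a fixed constant independent of $\tau$ after the $\tau^{-(n-1)(m+n-2)/(4(m+n-1))}$ prefactor), whereas the error terms carry strictly positive powers of $\tau$ coming from $|g-g_{\mathrm{eucl}}| = O(\mathrm{dist})$ integrated against the concentrating bubble; subcriticality of the exponent $2(m+n-1)/(m+n-2)$ is what makes the potentially-dangerous interior integral $\int_M \hat\psi_\tau^{2(m+n-1)/(m+n-2)} v^{-1}$ stay bounded rather than blow up. Once these estimates are in place the conclusion is immediate, so I would organize the write-up as: (1) conformal normalization of $\phi$ near $p$; (2) definition of $\hat\psi_\tau$ and the unit-boundary-volume normalization; (3) the three asymptotic estimates; (4) optimization in $\tau'$ via Lemma \ref{minimizacion h} and comparison with \eqref{nu Rn+ 2}; (5) passing to the $\limsup$.
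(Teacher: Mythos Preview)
Your proposal is essentially the same strategy as the paper's proof: transplant the Euclidean bubble $w_{0,\tau}$ via Fermi coordinates at a boundary point, multiply by a cutoff, normalize on $\partial M$, use the conformal invariance of $\mathcal W$ to set $v\equiv 1$, and estimate each piece of $\mathcal W(\hat\psi_\tau,\cdot)$ against its Euclidean counterpart, invoking Theorem~\ref{teotrace} and Proposition~\ref{W y Yamabe} to identify the limiting value with $\nu[\R^n_+,\ldots]$. Two small points of difference are worth flagging. First, the paper runs a genuine two-parameter argument (cutoff radius $\epsilon$ and bubble scale $\tau$), taking $\tau\to 0$ first and then $\epsilon\to 0$; this is because it bounds the volume-form and metric distortions crudely by a multiplicative factor $(1+C\epsilon)$, which does not disappear at fixed $\epsilon$. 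Your single-parameter version with fixed $\delta$ can be made to work, but then you must actually show that the metric/measure errors integrated against the concentrating bubble decay in $\tau$ (not merely in the cutoff radius), which is a slightly sharper estimate than the paper proves. Second, in your final line you write $\nu(\tau)\le \mathcal W(\hat\psi_\tau,\tau')$; strictly this should be $\nu(\tau')\le \mathcal W(\hat\psi_\tau,\tau')$, since $\nu(\tau)$ is the infimum of $\mathcal W(\,\cdot\,,\tau)$, not of $\mathcal W(\,\cdot\,,\tau')$. This is harmless for the conclusion because $\tau'\to 0$ as $\tau\to 0$, but it should be stated correctly.
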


\begin{proof} First define $\tilde{w}_{x_0, \tau} = V ^{-\frac{m+n-2}{2(m+n-1)}}w_{x_0, \tau}$; with $V$ as in \eqref{integral borde funcion modelo}. By Theorem \ref{teotrace} we know that  $\tilde{w}_{x_0, \tau}$ achieves the weighted Escobar quotient, hence by Proposition \ref{W y Yamabe}, there exits $\tilde{\tau} > 0$ such that 

\small

\begin{equation}\label{nu Rn}
\begin{array}{l}
\nu(\R^{n}_+, dt^{2} + dx^{2}, dV, d \sigma, m)  + 1  = \mathcal{W}(\R^{n}_+, dt^{2} + dx^{2},  dV, d \sigma, m)(\tilde{w}_{x_0, \tau},  \tilde{\tau}) + 1 \\
 = \dfrac{\tilde{\tau}^{\frac{m}{2(m+n-1)}}} {V^{\frac{m+n-2}{m+n-1}} } \int_{\R^{n}_+} |\nabla w_{x_0, \tau}|^{2} dV + \tilde{\tau}^{-\frac{1}{2}} V^{-1} \int_{\R^{n}_+}  w_{x_0, \tau}^{\frac{2(m+n-1)}{m+n-2}}dV.
\end{array}
\end{equation}  

\normalsize

Then Proposition \ref{funcion realiza nu} yields  $\tilde{\tau}= \tau V^{-\frac{2}{2m+n-1}}$.

On the other hand, fix a point $p \in \partial M$ and let $(x_i, t)$ be the Fermi coordinates in some fixed neighborhood $U$ of $p = (0, ..., 0)$. Let $1 > \epsilon > 0$ be such that $B(p, 2 \epsilon) \subset U$. Let $\eta : M \to [0, 1]$ be a cutoff function such
that $\eta \equiv 1$ on $B^{+}_{\epsilon}$, $supp (\eta) \subset B^+_{2\epsilon}$ and $|\nabla \eta|^{2} < C\epsilon^{-1}$ in $A^+_{\epsilon} = B^+_{2\epsilon} \smallsetminus B^+_{\epsilon}$. For each $0 < \tau < 1$, define $f_{\tau} : M \to \R$ by $f_{\tau} (x_1, . . . , x_{n-1}, t) = \eta w_{0,\tau} (x_1, . . . , x_{n-1}, t)$, and set  $\tilde{f}_{\tau} = V_{\tau}^{-\frac{m+n-2}{2(m+n-1)}} f_{\tau}$ for

\[
V_{\tau} = \int_{\partial M} f_{\tau}^{\frac{2(m+n-1)}{m+n-2}}.
\]

Proposition \ref{weighted Yamabe energy conformal} implies that if $w$ is a normalized function with the metric $v^{-2}g$, then

\small

\[
\mathcal{W}[M^n,  v^{-2}g,  dV_{v^{-2}g}, d \sigma_{v^{-2}g}, m](w, \tau) = \mathcal{W}[M^n, g, v^{m} dV_g, v^{m} d \sigma_g](v^{-\frac{m+n-2}{2}} w, \tau),
\] \normalsize this equality  allows us to consider without loss generality that $v \equiv 1$. Computing as in \cite[Lemma $3.4$]{LeeParker}, and using that $dV_{g} = (1 + O(r)) dxdt$  and $d\sigma_{g} = (1 + O(r)) dx$ we obtain

\small

\begin{equation}\label{w}
\begin{array}{l}
\mathcal{W}[M^n,  g, dV_{g}, d \sigma_{g}, m](\tilde{f}_{\tau}, \tilde{\tau}) + 1 \vspace{0.2cm}\\
= \displaystyle \dfrac{\tilde{\tau}^{\frac{m}{2(m+n-1)}}} {V_{\tau}^{\frac{m+n-2}{m+n-1}} } \left (\int_{B^{+}_{2\epsilon}} |\nabla f_{\tau}|_g^{2} + \frac{m+n-1}{4(m+n-2)}R_{g} f_{\tau}^2 dV_g \right. \vspace{0.2cm}\\
\displaystyle \left. + \int_{B^{+}_{2\epsilon} \cap \partial M} \frac{m+n-1}{2(m+n-2)}H_{g} f_{\tau}^2 d\sigma_g  \right) 
 + \tilde{\tau}^{-\frac{1}{2}} V_{\tau}^{-1} \int_{B^{+}_{2\epsilon}} f_{\tau}^{\frac{2(m+n-1)}{m+n-2}} dV_g \vspace{0.2cm}\\
\displaystyle \leq (1 + C\epsilon) \left\{ \dfrac{\tilde{\tau}^{\frac{m}{2(m+n-1)}}} {V_{\tau}^{\frac{m+n-2}{m+n-1}} } \left (\int_{B^{+}_{2\epsilon}} |\nabla f_{\tau}|_g^{2} + \frac{m+n-1}{4(m+n-2)}R_{g} f_{\tau}^2 dxdt \right. \right. \vspace{0.2cm}\\
\displaystyle \left. + \int_{B^{+}_{2\epsilon} \cap \partial M} \frac{m+n-1}{2(m+n-2)}H_{g} f_{\tau}^2 dx  \right) \left. + \tilde{\tau}^{-\frac{1}{2}} V_{\tau}^{-1} \int_{B^{+}_{2\epsilon}} f_{\tau}^{\frac{2(m+n-1)}{m+n-2}} dxdt \right\}.
\end{array}
\end{equation}

\normalsize

Let us recall that $c(m,n) = \frac{m+n-1}{m(m+n-2)^2}$. Fixing $\epsilon < 1$ and after taking $\sqrt{\tau} \leq \sqrt{c(m,n)}2\epsilon$ we obtain

\begin{equation}
\begin{array}{ll}
\displaystyle \int_{B^{+}_{2\epsilon}} R_{g} f_{\tau}^2 dxdt & \displaystyle \leq C \int_{B^{+}_{2\epsilon}} w_{0,\tau}^2 dxdt \vspace{0.2cm}\\
& = C\tau^{-\frac{(n-1)(m+n-2)}{2(m+n-1)}}   \displaystyle \int_{B^{+}_{2\epsilon}} \dfrac{ dxdt}{((1 + (\frac{c(m,n)}{\tau})^{\frac{1}{2}} t)^2 + \frac{c(m,n)}{\tau} |x|^2)^{m+n-2}} \vspace{0.2cm}\\
&= C\tau^{\frac{n-1}{2(m+n-1)}+\frac{1}{2}}  \displaystyle  \int_{B^{+}_{\frac{2\epsilon \sqrt{c(m,n)}}{\sqrt{\tau}}}} \dfrac{dydt}{((1 + s)^2 +  |y|^2)^{m+n-2}}. \\
\end{array}
\end{equation}

Similar as in \cite[Lemma $3.5$]{LeeParker} we get

\small

\begin{equation}
\displaystyle  \int_{B^{+}_{\frac{2\epsilon \sqrt{c(m,n)}}{\sqrt{\tau}}}} \dfrac{dydt}{((1 + s)^2 +  |y|^2)^{m+n-2}} =
\left \{ \begin{array}{ll}
C &  \text{if} \quad 4-n -2m < 0, \\
O(\tau^{m- \frac{1}{2}}) &  \text{if} \quad n = 3,  \, \frac{1}{2} - m > 0  \, \, \text{and}\\
O(\log(\tau)) & \text{if} \quad n = 3, \, \frac{1}{2} - m = 0. \\
\end{array}\right.
\end{equation}

\normalsize

Then 

\begin{equation}\label{r}
\displaystyle  \int_{B^{+}_{2\epsilon}} R_{g} f_{\tau}^2 dxdt  = E_1 =
\left \{ \begin{array}{ll}
O(\tau^{\frac{n-1}{2(m+n-1)}+\frac{1}{2}}) &  \text{if} \, 4-n -2m < 0, \\
O(\tau^{\frac{n-1}{2(m+n-1)}+ m} ) & \text{if} \, n = 3, \,  m < \frac{1}{2} \,  \text{and}  \\
O(\tau^{\tau^{\frac{n-1}{2(m+n-1)}+\frac{1}{2}}} \log(\tau)) & \text{if} \, n = 3,  \, \frac{1}{2} - m = 0. \\
\end{array}\right.
\end{equation}

Now, we estimate the  integrals on the right hand side in the second inequality of \eqref{w}

\begin{equation}\label{h}
\begin{array}{ll}
\displaystyle \int_{B^{n-1}_{2\epsilon} } H_{g} f_{\tau}^2 dx  &\leq C \int_{B^{n-1}_{2\epsilon}} w_{0,\tau}^2 dx = C\tau^{\frac{n-1}{2(m+n-1)}}\int_{B^{n-1}_{2\epsilon} } (1 + |y|^2)^{-(m+n-2)} dx \\
&\leq C\tau^{\frac{n-1}{2(m+n-1)}} 
\end{array}
\end{equation}

\begin{equation}\label{critico}
\int_{B^{+}_{2\epsilon}} f_{\tau}^{\frac{2(m+n-1)}{m+n-2}} dxdt  \leq \int_{\R^n_{+}} w_{0, \tau}^{\frac{2(m+n-1)}{m+n-2}} dxdt.
\end{equation}

Let us  estimate the  gradient integral in $A^+_{\epsilon} = B^+_{2\epsilon} \smallsetminus B^+_{\epsilon}$. Observe that 

\begin{equation}\label{gradiente en el anillo}
|\nabla f_{\tau}|_{\tilde{g}}^{2} \leq C|\nabla f_{\tau}|^{2} \leq C (\eta^{2}|\nabla w_{0, \tau}|^{2} + |\nabla \eta|^{2} w_{0, \tau}^{2} ).
\end{equation}

Now, we get 

\begin{equation}
\begin{array}{ll}
\displaystyle \int_{A^+_{\epsilon}} |\nabla \eta|^{2} w_{0, \tau}^{2} dxdt & \displaystyle \leq  C \epsilon^{-2} \int_{A^+_{\epsilon}} w_{0, \tau}^{2} dxdt \vspace{0.2cm}\\
&\leq  C \epsilon^{-2} \tau^{\frac{-(n-1)(m+n-2)}{2(m+n-1)} +\frac{n}{2}} \displaystyle  \int_{A^+_{\frac{\epsilon \sqrt{c(m,n)}}{\sqrt{\tau}} } } \left( \dfrac{1}{s^2 + |y|^{2}}\right)^{m+n-2} dxdt \vspace{0.2cm}\\
& \leq C \epsilon^{2-n -2m} \tau^{\frac{n-1}{2(m+n-1)} + m +\frac{n -3}{2} }
\end{array}
\end{equation} and

\begin{equation}
\begin{array}{ll}
\displaystyle \int_{A^+_{\epsilon}} \eta^{2}|\nabla w_{0, \tau}|^{2}dxdt & \leq  \displaystyle C \tau^{\frac{-(n-1)(m+n-2)}{2(m+n-1)} +\frac{n}{2} -1} \int_{A^+_{\frac{\epsilon \sqrt{c(m,n)}}{\sqrt{\tau}} } } \left( \dfrac{1}{s^2 + |y|^{2}}\right)^{m+n-1} dxdt\vspace{0.2cm}\\
& \leq C \epsilon^{2-n -2m} \tau^{\frac{n-1}{2(m+n-1)} + m +\frac{n -3}{2} }.
\end{array}
\end{equation}

Then

\begin{equation}\label{gradiente 2epsilon}
\begin{array}{ll}
\displaystyle \int_{A^+_{\epsilon}}  |\nabla f_{\tau}|_{g}^{2}dxdt & \leq C \epsilon^{2-n -2m} \tau^{\frac{n-1}{2(m+n-1)} + m +\frac{n -3}{2} }.
\end{array}
\end{equation}

Since for the Fermi coordinates around $p$ we obtain $g^{tt} = 1$, $g^{ti} = 0$ and $g^{ij} = \delta_{ij} + O(|x, t|)$ where $1 \leq i, j \leq n-1$, it follows 

\begin{equation}\label{gradiente epsilon}
\begin{array}{ll}
\displaystyle \int_{B_{\epsilon}} |\nabla f_{\tau}|_g^{2} dxdt & \displaystyle \leq \int_{B_{\epsilon}} |\nabla w_{0, \tau}|^{2} dxdt + C\int_{B_{\epsilon}} |x,t| (w_{0, \tau})_i (w_{0, \tau})_j \vspace{0.2cm}\\
& \displaystyle  \leq \int_{B_{\epsilon}} |\nabla w_{0, \tau}|^{2} dxdt + C\tau^{\frac{n-1}{2(m+n-1)}}.
\end{array}
\end{equation}

We already have the second inequality of \eqref{gradiente epsilon} because 

\small

\begin{equation}
\begin{array}{ll}
\displaystyle \int_{B_{\epsilon}} |x,t| (w_{0, \tau})_i (w_{0, \tau})_j & \leq C \tau^{-\frac{(n-1)(m+n-2)}{2(m+n-1)} - 2} \displaystyle \int_{B_{\epsilon}}  \dfrac{|x,t|x_i x_j dxdt }{((1 + (\frac{c(m,n)}{\tau})^{\frac{1}{2}} t)^2 + \frac{c(m,n)}{\tau} |x|^2)^{m+n}} \vspace{0.2cm}\\
& \leq C\tau^{\frac{n-1}{2(m+n-1)}}  \displaystyle  \int_{B^{+}_{\frac{2\epsilon \sqrt{c(m,n)}}{\sqrt{\tau}}}} \dfrac{|y,s|^3 dydt}{((1 + s)^2 +  |y|^2)^{m+n}} \vspace{0.2cm}\\
& \leq C\tau^{\frac{n-1}{2(m+n-1)}}. 
\end{array}
\end{equation}

\normalsize

Using the inequalities \eqref{r}, \eqref{h}, \eqref{gradiente 2epsilon} and \eqref{gradiente epsilon} in the inequality \eqref{w} we get that

\small

\begin{equation}\label{w2+}
\begin{array}{l}
\mathcal{W}[M^n,  g,  dV_{g}, d \sigma_{g}, m](\tilde{f}_{\tau}, \tilde{\tau}) + 1 \\
\displaystyle \leq (1 + C\epsilon) \left\{ \dfrac{\tilde{\tau}^{\frac{m}{2(m+n-1)}}} {V_{\tau}^{\frac{m+n-2}{m+n-1}} } \left ( \int_{\R^n_{+}} |\nabla w_{0, \tau}|^{2} dxdt + C\tau^{\frac{n-1}{2(m+n-1)}}  \right.  \right.\vspace{0.2cm} \\
\displaystyle \left. + C \tau^{\frac{(n-1)(2m+n-1)}{2(m+n-1)} + m +\frac{n -3}{2} } \epsilon^{2-n -2m} + E_1   \right) \left. + \tilde{\tau}^{-\frac{1}{2}} V_{\tau}^{-1} \int_{\R^n_{+}} w_{0, \tau}^{\frac{2(m+n-1)}{m+n-2}} dxdt \right\}.
\end{array}
\end{equation}

\normalsize

Now using the inequality \eqref{nu Rn} we conclude

\small

\begin{equation}\label{w2}
\begin{array}{l}
\mathcal{W}[M^n,  g,  dV_{g},  d \sigma_{g}, m](\tilde{f}_{\tau}, \tilde{\tau}) + 1 \\
\leq (1 + C\epsilon) \nu[\R^{n}_{+},  dt^2 + dx^2, dV_g, d \sigma_g, m]   \vspace{0.2cm}\\
+ (1 + C\epsilon) \left\{   \tilde{\tau}^{\frac{m}{2(m+n-1)}} V_{\tau}^{-\frac{m+n-2}{m+n-1}} \left(  C\tau^{\frac{n-1}{2(m+n-1)}} + C \tau^{\frac{(n-1)(2m+n-1)}{2(m+n-1)} + m + \frac{n-3}{2}}  \epsilon^{2-n -2m}  + E_1   \right) \right. \vspace{0.2cm}\\
\displaystyle  + \tilde{\tau}^{\frac{m}{2(m+n-1)}} (V_{\tau}^{-\frac{m+n-2}{m+n-1}} - V^{-\frac{m+n-2}{m+n-1}}) \int_{\R^n_{+}} |\nabla w_{0, \tau}|^{2} dxdt \vspace{0.2cm} \\
\displaystyle \left. + \tilde{\tau}^{-\frac{1}{2}} (V_{\tau}^{-1} - V^{-1}) \int_{\R^n_{+}} w_{0, \tau}^{\frac{2(m+n-1)}{m+n-2}} dxdt\right\}.
\end{array}
\end{equation}

\normalsize

 On the other hand, we obtain

\begin{equation}\label{V - Vt}
\begin{array}{ll}
V - V_{\tau} & \leq  \displaystyle \int_{\R^{n-1} \setminus B_{\epsilon}^{n-1}} w_{0, \tau}^{\frac{2(m+n-1)}{m+n-2}} dx \vspace{0.2cm}\\
& = \displaystyle \tau ^{-\frac{n-1}{2}}\int_{\partial \R^n_{+} \setminus B^{n-1}_{\epsilon}} (1 + \frac{c(m,n)}{\tau}|x|^2)^{-(m+n-1)} dx \vspace{0.2cm}\\
& = \displaystyle  C \int_{\partial \R^n_{+} \setminus B^{n-1}_{\frac{2\epsilon \sqrt{c(m,n)}}{\sqrt{\tau}}}} (1 + |y|^2)^{-(m+n-1)} dy \vspace{0.2cm}\\
& \leq C \epsilon^{1-n-2m}\tau^{m + \frac{n}{2} - \frac{1}{2}}.
\end{array}
\end{equation}

In particular, we get that the constants $V_{\tau}$ are uniformly bounded away from zero. Using  estimate \eqref{V - Vt} and the Taylor expansion for the functions $x^{-\frac{m+n-2}{m+n-1}}$  and $x^{-1}$ we obtain

\begin{equation}\label{V - Vt potencia}
\begin{array}{ll}
V_{\tau}^{-\frac{m+n-2}{m+n-1}} - V^{-\frac{m+n-2}{m+n-1}} \leq C \epsilon^{1-n-2m}\tau^{m + \frac{n}{2} - \frac{1}{2}}
\end{array}
\end{equation} and 

\begin{equation}\label{V - Vt inversos}
\begin{array}{ll}
V_{\tau}^{-1} - V^{-1} \leq C \epsilon^{1-n-2m}\tau^{m + \frac{n}{2} - \frac{1}{2}}.
\end{array}
\end{equation}

Additionally, the equality \eqref{nu Rn} implies the following  estimates

\begin{equation}\label{terminos restantes}
\begin{array}{ll}
 \displaystyle \tilde{\tau}^{\frac{m}{2(m+n-1)}} \int_{\R^n_{+}} |\nabla w_{0, \tau}|^{2} dxdt \leq C \quad \text{and} \quad \tilde{\tau}^{-\frac{1}{2}} \int_{\R^n_{+}} w_{0, \tau}^{\frac{2(m+n-1)}{m+n-2}} dxdt  \leq C.
\end{array}
\end{equation}

The substitution $\tilde{\tau} = \tilde{\tau}V^{\frac{1}{2m+n-1}}$, the inequalities \eqref{V - Vt potencia}, \eqref{V - Vt inversos}, \eqref{terminos restantes} and \eqref{w2} yield 

\small

\begin{equation}\label{w3}
\begin{array}{l}
\mathcal{W}[M^n,  g,  dV_{g}, d \sigma_{g}, m](\tilde{f}_{\tau}, \tilde{\tau}) + 1 \vspace{0.2cm}\\
\leq (1 + C\epsilon) \nu[\R^{n}_{+},  dt^2 + dx^2, 1^{m} dV_g, 1^{m} d \sigma_g]   \vspace{0.2cm} \\
+ (1 + C\epsilon) \left\{ V^{-\frac{m+n-2}{m+n-1} -\frac{m}{2(2m+n-1)(m+n-1)}} \left(  C\tau^{\frac{1}{2}} + C \tau^{\frac{1}{2} + m +\frac{n -3}{2} } \epsilon^{2-n -2m}  \right. \right. \vspace{0.2cm}\\
\left. +  \tau^{\frac{m}{2(m+n-1)}}E_1   \right)\left. + C \epsilon^{1-n-2m}\tau^{m + \frac{n}{2} - \frac{1}{2}}  \right\}.
\end{array}
\end{equation}

\normalsize


Finally, taking $\tau \to 0$ and after $\epsilon \to 0$ in \eqref{w3}  the conclusion follows. \end{proof}

\textbf{\textit{Proof of Theorem B}}. By the definition of $\nu$ and Lemma \ref{limsup nu} we obtain that

\begin{equation}
\nu[M^n,  g, v^{m} dV_{g}, v^{m} d \sigma_{g}] \leq \nu[\R^{n}_{+},  dt^2 + dx^2,  dV, d \sigma, m].
\end{equation}

By Proposition \ref{W y Yamabe} we conclude 

\begin{equation}\label{Lambda}
\begin{array}{l}
\Lambda[M^n,  g, v^{m} dV_{g}, v^{m} d \sigma_{g}] \leq \Lambda[\R^{n}_{+},  dt^2 + dx^2,  dV, d \sigma, m]. \quad \bs\\
\end{array}
\end{equation}

\bigskip
\small\noindent{\bf Acknowledgments:}
This work was part of my  Ph.D. thesis under the guidance of Fernando Cod\'a Marques  at the Instituto Nacional de Matemática Pura e Aplicada (IMPA), Rio de Janeiro, Brazil. I am grateful to my advisor  for encouraging me to study this problem. I am also grateful to   Universidad del Valle, Cali, Colombia for the support and to  Capes for scholarship.

\end{document}